\newtheorem{thm}{Theorem}[section]
\newtheorem{lem}[thm]{Lemma}
\newtheorem{prop}[thm]{Proposition}
\theoremstyle{definition}
\theoremstyle{definition}
\newtheorem{defi}[thm]{Definition}
\newtheorem*{definition-non}{Definition}
\theoremstyle{remark}
\newenvironment{rmk}
  {\pushQED{\qed}\rmkx}
  {\popQED\endrmkx}
\newcommand*\interior[1]{\mathring{#1}} 
\newcommand{\spt}{\mathord{\mathrm{spt}}}
\newcommand{\eps}{\varepsilon}
\newcommand{\emps}{\varnothing}
\newcommand{\rn}{\mathbb R^n}
\newcommand{\sn}{S^{n-1}}
\newcommand{\dn}{D^{n-1}}
\newcommand{\kn}{\mathcal K^n}
\newcommand{\kno}{\mathcal K^n_o}
\newcommand{\bla}{\raise.2ex\hbox{$\scriptstyle\pmb \langle$}}
\newcommand{\sbla}{\raise.1ex\hbox{$\scriptscriptstyle\pmb \langle$}}
\newcommand{\bra}{\raise.2ex\hbox{$\scriptstyle\pmb \rangle$}}
\newcommand{\sbra}{\raise.1ex\hbox{$\scriptscriptstyle\pmb \rangle$}}
\newcommand{\balpha}{\pmb{\alpha}}
\numberwithin{equation}{section}
\begin{document}

\title{The Uniqueness of the Gauss Image Measure}

\author{Vadim Semenov}
\address{
Courant Institute of Mathematical Sciences, New York University, 251 Mercer St., New York, NY 10012}
\curraddr{}
\email{vs1292@nyu.edu}


\date{\today}

\dedicatory{}

\subjclass[2010]{52A20, 52A38, 52A40, 35J20, 35J96, 28A20, 28A99}

\keywords{Convex Geometry, The Gauss Image Problem, Aleksandrov Condition,  Monge-Amp\`ere equation, Aleksandrov Problem}
\maketitle

\begin{abstract}

We show that if the Gauss Image Measure of submeasure $\lambda$ via convex body $K$ agrees with the Gauss Image Measure of $\lambda$ via convex body $L$, then the radial Gauss Image maps of their duals, $\balpha_{K^*}$ and $\balpha_{L^*}$, are equal to each other almost everywhere as multivalued maps with respect to $\lambda$. As an application of this result, we establish that, in this case, dual bodies, $K^*$ and $L^*$, are equal up to a dilation on each rectifiable path connected component of the support of $\lambda$. Additionally, we provide many previously unknown properties of the radial Gauss Image map, most notably its variational Lipschitz behavior, establish some measure theory concepts for multivalued maps and, as a supplement, show how the main uniqueness statement neatly follows from the Hopf Theorem under additional smooth assumptions on $K$ and $L$.

\end{abstract}

\vspace{30mm}
\tableofcontents
\newpage

\section{Introduction}\label{Section Intro}

The history of the Gauss Image Problem originates with the paper of Aleksandrov who first wondered whether there exists a convex body (or a polytope) with prescribed Aleksandrov's integral curvature (or prescribed values of exterior angles). Aleksandrov solved this problem completely by providing necessary and sufficient existence conditions on the Aleksandrov's integral curvature (or values of exterior angles) as well as showing that the solution is unique up to scaling. Later, K. J. B\"or\"oczky, E. Lutwak, D. Yang, G. Y. Zhang and Y. M. Zhao in \cite{GIP}  introduced a measure-theoretic generalization of the Aleksandrov problem involving \textit{the Gauss Image Measure}. Formally speaking, given any submeasure $\lambda$ and a convex body $K$, the Gauss Image Measure $\lambda(K,\cdot)$ is a pullback of submeasure $\lambda$ under the radial Gauss Image map $\balpha_K$, which is a composition of the usual Gauss map of the convex body $K$ with a radial map of $K$. There are, in fact, many measures associated with convex bodies which turn out to be the Gauss Image Measure for different submeasurers $\lambda$. The integral curvature defined by Aleksandrov \cite{Aleks}, surface are measures of Aleksandrov-Fenchel-Jessen \cite{Aleks0} and, more recently, the dual curvature measures \cite{HLYZ16}, are all the Gauss Image measures. In the attempt to classify these type of measure the following problem was introduced in \cite{GIP}: 

 \smallskip
 
 \noindent{\textbf{The Gauss Image problem}}  \textit{Suppose $\lambda$ is a submeasure defined on the Lebesgue measurable subsets of $\sn$ and $\mu$ is a Borel measure on $\sn$. What are necessary and sufficient conditions on $\lambda$ and $\mu$, so that there exists a convex body $K$ with origin in its interior such that \begin{equation}
  \mu=\lambda(K,\cdot)?
\end{equation}
If such a convex body exists, to what extent is it unique?}  

 \smallskip
 
Many people have established relevant results for this problem. Let us start with mentioning a series of papers by Aleksandrov who solved the problem for uniform Lebesgue measure $\lambda$, see \cite{Aleks1} and \cite{Aleks}. Different proofs were given by Oliker \cite{Oliker} and Bertrand \cite{Bertrand}. The $L_p$ analogues of the Aleksandrov problem (The $L_p$ Gauss Image Problem with $\lambda$ uniform Lebesgue measure) were considered by Huang, Lutwak, Yang and Zhang in \cite{Lp Aleks}, by Mui in \cite{Stephanie}, and by Zhao in \cite{Zhao}. The $L^p$ analog of the Gauss Image Problem was considered in \cite{Lp Gauss} by C. Wu, D. Wu, and N. Xiang and in \cite{Smooth solutions to the Gauss image problem} by L. Chen, D. Wu and N. Xiang.

While there are many results on generalizations of the Gauss Image Problem (or its more restricted case, the Aleksandrov Problem) to the $L_p$ setting, the original version of the Gauss Image Problem is still not fully resolved. The absolutely continuous case of this problem was addressed in \cite{GIP}. There, K.J. B\"or\"oczky, E. Lutwak, D. Yang, G. Y. Zhang and Y. M. Zhao introduced the simple geometric condition on measures  $\mu$ and $\lambda$ called \textit{the Aleksandrov relation of $\mu$ and $\lambda$} that was shown to be a sufficient condition for the existence of the body $K$ under absolutely continuous assumption for $\lambda$. Moreover, they established that if $\lambda$ is assumed to be absolutely continuous and strictly positive on open sets, the Aleksandrov relation is a necessary assumption and the solution is unique up to a dilation. 

On the opposite side, the fully discrete version of the Gauss Image Problem was addressed by V. Semenov in \cite{Semenov GIP}, who introduced \textit{the weak Aleksandrov relation}, a necessary assumption for the Gauss Image Problem. There, it was found that the existence of the body corresponds to the uniqueness of the function maximizing \textit{the Assignment functional} over the finite set of, what is called, \textit{the Assignment functions}, which parametrize all possible assignments of weights $\lambda$ to weights $\mu$ for discrete measures $\mu$ and $\lambda$. Then, the existence of solution was obtained for $\mu$ and $\lambda$ satisfying the geometric \textit{Edge-normal loop free condition}. It turned out, that for discrete measure the solution body is always non-unique if exists; yet, fixing $\mu$ and $\lambda$, if $K$ and $L$ are solutions, then their corresponding assignment functions are the same. This turned out to be a proper restatement of the uniqueness question for the discrete Gauss Image Problem.

In the attempt to relax results of \cite{GIP} to the weak Aleksandrov relation, the existence of the solution to the Gauss Image Problem under weak Aleksandrov relation, $\mu$ discrete and $\lambda$ absolutely continuous was obtained by V. Semenov in \cite{Semenov WAC}. The question of whether there exists a body $K$, solving $\mu=\lambda(K,\cdot)$ with $\lambda$ absolutely continuous and weak Aleksandrov relation between $\mu$ and $\lambda$ is still open and can be considered the most important unsolved question with regards to the Gauss Image Problem.

Before we proceed to the main statements of this work, let us also briefly mention works related to the regularity of the Gauss Image Problem. The regularity of the original Aleksandrov Problem was investigated by P. Guan and Y. Li in \cite{GL97com} and V. Oliker in \cite{Ol2} and \cite{Ol21}. The recent result of Q. Li, W. Sheng and X. Wang can be found to be also related to the Gauss Image Problem\cite{flow}. Finally, for a more general viewpoint on regularity, one should consult works of L. Caffarelli \cite{Caffarelli}; S.-Y. Cheng and S.-T. Yau  \cite{Cheng}; and N. S. Trudinger and X.-J. Wang \cite{Trudinger}.

\vspace{6mm}

 In this work, we would like to address the uniqueness question of the Gauss Image Problem in its most general setting. As shown in \cite{Semenov GIP} when measures $\mu$ and $\lambda$ are discrete the solution body is not unique. Nevertheless, it turns out that one can obtain a uniqueness statement for their inverse radial Gauss Image Maps. The following is the main result of the paper. Here, $\kno$ denotes the set of convex sets in $\mathbb R^n$ that contain 0 in the interior. $K^*$ is the dual body of $K$. By $\balpha_{K^*}$ and $\balpha_{L^*}$ we denote the radial Gauss Image maps of bodies $K^*$ and $L^*$. Note that submeasure is a measure where we relax countable additivity to countable subadditivity, see Definition \ref{submeasure defenition}.
 
 \begin{thm}\label{main}
 Let $K,L\in \kno$. Suppose $\lambda(K,\cdot)$ and $\lambda(L,\cdot)$ are finite Borel measures for a spherical submeasure $\lambda$, defined on the Lebesgue measurable subsets of $\sn$. Then, $\lambda(K,\cdot)=\lambda(L,\cdot)$ if and only if $\balpha_{K^{*}}$ and $\balpha_{L^{*}}$  are equal almost everywhere as multivalued maps with respect to submeasure $\lambda$. 
 \end{thm}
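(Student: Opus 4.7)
The scaffolding is the polar-duality identity
\[
\lambda(K,\eta) \;=\; \lambda\bigl(\balpha_{K^*}(\eta)\bigr), \qquad \eta \subset \sn \text{ Borel},
\]
which follows from the equivalence $v \in \balpha_K(u) \Longleftrightarrow u \in \balpha_{K^*}(v)$ relating the radial Gauss maps of a body and its polar. This rephrases $\lambda(K,\cdot) = \lambda(L,\cdot)$ as the assertion that the multivalued maps $\balpha_{K^*}$ and $\balpha_{L^*}$ assign images of identical $\lambda$-measure to every Borel set in the codomain.

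For the ``if'' direction, suppose $\balpha_{K^*}(u) = \balpha_{L^*}(u)$ off a $\lambda$-null set $N$. Then the symmetric difference $\balpha_{K^*}(\eta) \,\triangle\, \balpha_{L^*}(\eta)$ is contained in $\balpha_{K^*}(\eta\cap N) \cup \balpha_{L^*}(\eta\cap N)$, which has vanishing $\lambda$-mass by subadditivity combined with the vanishing of $\lambda(K,N)$ and $\lambda(L,N)$; the latter is a structural consequence of the notion of ``$\lambda$-almost-everywhere equality of multivalued maps'' formalised earlier in the paper. Hence $\lambda(K,\eta) = \lambda(L,\eta)$.

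For the ``only if'' direction, assume $\lambda(K,\cdot) = \lambda(L,\cdot)$ and set $D = \{u : \balpha_{K^*}(u) \neq \balpha_{L^*}(u)\}$. By symmetry reduce to showing $D_1 = \{u : \balpha_{K^*}(u) \not\subset \balpha_{L^*}(u)\}$ is $\lambda$-null. For each $u\in D_1$ pick a separating $v_u \in \balpha_{K^*}(u) \setminus \balpha_{L^*}(u)$ and, using compactness of $\balpha_{L^*}(u)$, a rational-center rational-radius open spherical ball $V_u \ni v_u$ disjoint from $\balpha_{L^*}(u)$. Discretizing over the rational basis together with a rational lower bound on the separation distance yields a countable Borel decomposition $D_1 = \bigcup_k D_1^{(k)}$ on each piece of which a single ball $V_k$ realizes a uniform positive separation. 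Now invoke the variational Lipschitz behaviour of the radial Gauss image map (promised in the abstract) to (i) verify Borel measurability of each $D_1^{(k)}$, and (ii) thicken $D_1^{(k)}$ to a Borel set $\eta_k$ such that $V_k \subset \balpha_{K^*}(\eta_k)$ while $V_k \cap \balpha_{L^*}(\eta_k) = \emptyset$. Testing the duality identity against $\eta_k$, and using that the pushforwards $\lambda(K,\cdot)$ and $\lambda(L,\cdot)$ are genuine finite Borel measures (so countable additivity is available when comparing $\lambda$-masses of images), then forces $\lambda(D_1^{(k)}) = 0$ for every $k$, and summation completes the argument.

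The principal obstacle is this final quantitative step, which must be executed without any absolute-continuity hypothesis on $\lambda$: the set on which $\balpha_{K^*}$ is genuinely multivalued can carry positive $\lambda$-mass, so the variational Lipschitz estimates must be applied precisely enough to preclude branch-interchange conspiracies between $\balpha_{K^*}(u)$ and $\balpha_{L^*}(u)$ that would corrupt the uniform separation on $D_1^{(k)}$. Measurability of the images $\balpha_{K^*}(\eta_k)$ and $\balpha_{L^*}(\eta_k)$ is likewise delicate, and is precisely what the multivalued measure theory advertised in the abstract is built to handle.
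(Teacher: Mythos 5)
The proposal has a genuine gap rooted in a misreading of what ``equal almost everywhere as multivalued maps'' means in this paper. The paper's Definition \ref{Almost everywhere equal defenition} is the symmetric-difference condition: $\balpha_{K^*}$ and $\balpha_{L^*}$ are equal $\lambda$-a.e. iff $\lambda(\balpha_K(\omega)\triangle\balpha_L(\omega))=0$ for every Borel $\omega$, using $\balpha_{K^*}^{-1}=\balpha_K$. Section \ref{Section Measure} is devoted to showing that this is \emph{strictly weaker} than the pointwise condition $\lambda\bigl(\{u:\balpha_{K^*}(u)\neq\balpha_{L^*}(u)\}\bigr)=0$ when the maps are multivalued; the example $f(x)=(x-1,x+1)$, $g(x)=[x-1,x+1]$ satisfies the symmetric-difference condition everywhere while $f(x)\neq g(x)$ for every $x$. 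Your ``if'' direction starts from the stronger pointwise hypothesis, and your ``only if'' direction aims to prove the stronger pointwise conclusion (that $D_1=\{u:\balpha_{K^*}(u)\not\subset\balpha_{L^*}(u)\}$ is $\lambda$-null). That stronger conclusion is in general false here: the whole point of the discrete non-uniqueness phenomenon discussed in the introduction, and of the measure-theoretic framework of Section \ref{Section Measure}, is that $K^*$ and $L^*$ may have disagreeing normal cones on a set of directions of full $\lambda$-measure even though $\lambda(K,\cdot)=\lambda(L,\cdot)$. So the target of your ``only if'' argument is unprovable. (A small additional slip: the ``polar-duality identity'' should read $\lambda(K,\eta)=\lambda(\balpha_K(\eta))=\lambda(\balpha_{K^*}^{-1}(\eta))$, not $\lambda(\balpha_{K^*}(\eta))$.)

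With the correct definition, the ``if'' direction is immediate: if $\lambda(\balpha_K(\omega)\triangle\balpha_L(\omega))=0$ then $\lambda(\balpha_K(\omega))=\lambda(\balpha_L(\omega))$ by subadditivity and monotonicity, so $\lambda(K,\cdot)=\lambda(L,\cdot)$; there is no need for a null set $N$ or the containment you propose. The real work is the converse, and your sketch there does not engage with the mechanism the paper actually uses. The paper first reduces to open $\omega$ (Lemma \ref{open}, via regularity of the finite Borel measures $\lambda(K,\cdot)$, $\lambda(L,\cdot)$), then further reduces to a small compact spherically convex body $\gamma$ with $\lambda(\balpha_K(\partial\gamma))=\lambda(\balpha_L(\partial\gamma))=0$ (Lemma \ref{gamma}, via the grid Lemma \ref{grid}). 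The decisive step is the containment
\begin{equation*}
\balpha_K(\gamma)\triangle\balpha_L(\gamma)\setminus\bigl(\balpha_K(\partial\gamma)\cup\balpha_L(\partial\gamma)\bigr)\subset\bigcup_{0<t<1}\balpha_{K\hat{+}_tL}(\partial\gamma),
\end{equation*}
obtained from the Lipschitz continuity in $t$ of $\balpha_{K\hat{+}_tL}(\cdot)$ in Hausdorff distance (Proposition \ref{continuity of sum}), or from the Hopf theorem in the $C^1$ strictly convex case (Lemma \ref{same for strictly}). Then Lemma \ref{eta_o} shows that a positive-$\lambda$-mass chunk of $\bigcup_{0<t<1}\balpha_{K\hat{+}_tL}(u)\setminus(\balpha_K(u)\cup\balpha_L(u))$ forces $\lambda(K,\cdot)\neq\lambda(L,\cdot)$, and a Lindel\"{o}f covering argument finishes. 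Your step ``testing the duality identity against $\eta_k$ \dots forces $\lambda(D_1^{(k)})=0$'' has no stated mechanism, and because it is aimed at the pointwise conclusion, no amount of sharpening the variational Lipschitz estimate can rescue it.
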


 
Since $\balpha_{K^{*}}$ and $\balpha_{L^{*}}$ are multivalued valued maps, one has to be careful about the definition of multivalued maps being equal almost everywhere. We address this issue in Section \ref{Section Measure}. The proper reformulation of Theorem \ref{main} is that $\lambda(K,\cdot)=\lambda(L,\cdot)$ if and only if $\forall \omega\subset S^{n-1}$ Borel sets $\lambda(\balpha_K(\omega)\triangle\balpha_L(\omega))=0$, where $\triangle$ denotes the symmetric difference between two sets. Another way to state Theorem \ref{main} is the following:
\begin{equation}\label{another interpretation}
\begin{gathered}
	\forall \omega \text{ Borel sets } \lambda(\balpha_K(\omega))=\lambda(\balpha_L(\omega)). \\ \Leftrightarrow \\ \forall \omega \text{ Borel sets } \balpha_K(\omega)=\balpha_L(\omega) \text{ up to a $\lambda$ measure zero set.}
\end{gathered}
\end{equation}
One can easily see, that this is quite special behavior of the radial Gauss Image maps. For example, equation \eqref{another interpretation} doesn't hold if one considers different rotations of the sphere instead of $\balpha_K$ and $\balpha_L$, for uniform measure $\lambda$. 

Before we state our next results, with a more familiar geometric interpretation, let us address the importance of the proof behind Theorem \ref{main}. Our approach to Theorem \ref{main} was to first analyze the uniqueness question under the additional smooth assumptions on $K$ and $L$. With a non-trivial argument, we managed to reduce the study of the uniqueness of the Gauss Image Problem to the investigation of images of Borel sets under the radial Gauss Image maps of the harmonic mean variation for bodies $K$ and $L$, see Definition \ref{Defenition of Harmonic Mean Variation} and Lemma \ref{same for strictly}. It turned out, that if $K$ and $L$ are $C^1$ strictly convex bodies, this study essentially reduces to a proper application of the Hopf Theorem, which establishes Theorem \ref{main}. 

However, when dropping $C^1$ strictly convex assumption, the radial Gauss Image Map suddenly becomes multi-valued, which greatly complicates the application of the differential topological tools to the problem. Yet, we managed to mimic the application of the Hopf Theorem by establishing Lipschitz behavior of the mentioned, harmonic mean variation, for the Hausdorff distance between sets on the sphere, see Proposition \ref{continuity of sum}. The established techniques and ideas seem very important, as they provide a way to apply differential topological results to the setting with very little regularity. They are especially important if the objects are not necessarily continuous in a regular sense and maps are multivalued which, for instance, often happens when working with normals of surfaces of less than $C^1$ regularity.

Let us also mention that Theorem \ref{main} combined with a continuity of the radial Gauss Image Map, the Proposition \ref{continuity}, implies the following continuity result for simultaneous normals of bodies $K$ and $L$. Here, $u_\delta$ and 
$\balpha_{K^*,L^*}(u)_\eps$ stands for the outer parallel sets of respectively $u$ and $\balpha_{K^*,L^*}(u)$, that is their fattening by $\delta$ and $\eps$. See \eqref{outer parallel set} for the formal definitions. The $\spt \lambda$ denotes the support of submeasure $\lambda$, see Definition \ref{Definition of supporrt}.

 	
 	\begin{thm}\label{new additional main}
 		Let $K,L\in \kno$. Suppose $\lambda(K,\cdot)=\lambda(L,\cdot)$ are finite Borel measures for a spherical Lebesgue submeasure $\lambda$. Then, given $u\in\spt \lambda$,	
 		\begin{equation}
 			\balpha_{K^*,L^*}(u):=\balpha_{K^*}(u)\cap\balpha_{L^*}(u)\neq \emps
 		\end{equation}
 		In particular, $\balpha_{K^*,L^*}$ defined on $\spt \lambda$ is a continuous map. That is, for any $\eps>0$ there exist $\delta >0$ such that for any $u\in\spt \lambda$ \begin{equation}
 		\balpha_{K^*,L^*}(u_\delta)\subset \balpha_{K^*,L^*}(u)_\eps.
 	\end{equation}
 	\end{thm}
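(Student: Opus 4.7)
The plan is to extract from Theorem \ref{main} a single $\lambda$-null exceptional set $N\subset\sn$ off of which the multivalued maps $\balpha_{K^*}$ and $\balpha_{L^*}$ coincide pointwise as sets (the natural formulation of almost-everywhere equality of multivalued maps, as developed in Section \ref{Section Measure}), then exploit the defining property of $\spt\lambda$ to approximate any point of the support by points of $\sn\setminus N$, and finally pass to the limit using the upper semicontinuity of the radial Gauss image map (Proposition \ref{continuity}). Concretely, for the nonemptiness claim I fix $u_0\in\spt\lambda$; since every open neighborhood of $u_0$ has positive $\lambda$-measure, no such neighborhood lies inside $N$, so there is a sequence $u_k\to u_0$ with $u_k\in\sn\setminus N$ and $\balpha_{K^*}(u_k)=\balpha_{L^*}(u_k)$. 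Picking $x_k$ in this common set, compactness of $\sn$ yields a convergent subsequence $x_k\to x_0$, and the closed-valued upper semicontinuity of both $\balpha_{K^*}$ and $\balpha_{L^*}$ furnished by Proposition \ref{continuity} places $x_0$ in $\balpha_{K^*}(u_0)\cap\balpha_{L^*}(u_0)=\balpha_{K^*,L^*}(u_0)$, as required.

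For the continuity statement, the plan is to first verify pointwise upper semicontinuity of $\balpha_{K^*,L^*}$ on $\spt\lambda$: if $u_k\to u_0$ in $\spt\lambda$ and $x_k\in\balpha_{K^*,L^*}(u_k)$ converges to $x_0$, then applying the upper semicontinuity of $\balpha_{K^*}$ and $\balpha_{L^*}$ separately places $x_0$ in the intersection. Upgrading to the uniform inclusion $\balpha_{K^*,L^*}(u_\delta)\subset\balpha_{K^*,L^*}(u)_\eps$ is then handled by contradiction: assume failure for some $\eps>0$ and produce sequences $u_k,v_k\in\spt\lambda$ with $d(u_k,v_k)\to 0$ and $x_k\in\balpha_{K^*,L^*}(v_k)$ satisfying $d(x_k,\balpha_{K^*,L^*}(u_k))\geq\eps$; extract convergent subsequences with common limit $u_\infty\in\spt\lambda$ and $x_\infty\in\balpha_{K^*,L^*}(u_\infty)$ by the just-established pointwise upper semicontinuity, and combine this with the nonemptiness guaranteed by Step 1 and the uniform upper semicontinuity structure of $\balpha_{K^*},\balpha_{L^*}$ on $\sn$ to force a contradiction.

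The main obstacle is precisely this second step. Intersections of upper semicontinuous multivalued maps do not in general inherit uniform upper semicontinuity from their factors, and pointwise upper semicontinuity on a compact domain is not enough on its own: the normal cones that make up $\balpha_{K^*}(u)$ can jump upward at singular $u$, producing exactly the failure of uniform inclusion exhibited by generic compact-valued upper semicontinuous maps. What rescues the argument here is the geometric control on the radial Gauss image map already developed in the paper, most notably the Lipschitz behavior of the harmonic mean variation (Proposition \ref{continuity of sum}), which tightly regulates how $\balpha_{K^*}$ and $\balpha_{L^*}$ can shift with $u$ and thereby prevents the intersection from losing its uniform closeness. Nonemptiness, by contrast, is a clean density-plus-compactness argument sitting directly on top of Theorem \ref{main} and Proposition \ref{continuity}.
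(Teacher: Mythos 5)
The central step of your nonemptiness argument — extracting a single $\lambda$-null set $N$ off of which $\balpha_{K^*}$ and $\balpha_{L^*}$ coincide \emph{pointwise} as sets — does not follow from Theorem \ref{main}. The notion of almost-everywhere equality established in Section \ref{Section Measure} (Definition \ref{Almost everywhere equal defenition}) is the preimage condition $\lambda(f^{-1}(\omega)\triangle g^{-1}(\omega))=0$ for all Borel $\omega$, and Section \ref{Section Measure} explicitly exhibits multivalued maps (the interval example $f(x)=(x-1,x+1)$ versus $g(x)=[x-1,x+1]$) satisfying this condition while differing pointwise on \emph{every} point, so no such $N$ need exist. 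Worse, the paper remarks near the start of Section \ref{Section Appl} that one can build $K,L\in\kno$ with $\lambda(K,\cdot)=\lambda(L,\cdot)$ and $\balpha^*_K(u)\neq\balpha^*_L(u)$ for \emph{all} $u\in\spt\lambda$; any valid $N$ would then contain $\spt\lambda$ and hence have positive $\lambda$-measure, an outright contradiction. So the sequence $u_k\to u_0$ with $\balpha_{K^*}(u_k)=\balpha_{L^*}(u_k)$ on which your whole Step 1 rests cannot in general be produced. The paper instead proves nonemptiness (Lemma \ref{nonempty}) by a direct contradiction: assuming $\balpha_{K^*}(u)\cap\balpha_{L^*}(u)=\emps$, it separates the two compact sets by $\eps$, uses Proposition \ref{continuity} to propagate that separation to a neighborhood $u_\delta$, sets $\omega=\balpha_{K^*}(u)_\eps$, and shows $u_\delta\subset\balpha_K(\omega)\setminus\balpha_L(\omega)$, so that $\lambda(\balpha_K(\omega)\triangle\balpha_L(\omega))\geq\lambda(u_\delta)>0$, contradicting Theorem \ref{main}.

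Your plan for the continuity half is also off-target, though less fatally. You anticipate needing the Lipschitz control of Proposition \ref{continuity of sum} to stop the intersection from ``jumping,'' but in fact the paper never invokes that proposition here. Lemma \ref{continuity of K and L} reduces, via Proposition \ref{continuity}, to showing that for the two compact sets $\omega=\balpha_{K^*}(u)$, $\gamma=\balpha_{L^*}(u)$ one has $\omega_\delta\cap\gamma_\delta\subset(\omega\cap\gamma)_\eps$ for small $\delta$, which is a clean sequential-compactness argument on closed subsets of $\sn$ requiring no geometric input beyond closedness; the harmonic-mean machinery is used only in the proof of Theorem \ref{main} itself. So the obstacle you identify as ``the main obstacle'' — intersections of u.s.c.\ maps failing to be uniformly u.s.c.\ — is resolved by the compactness of the fibers plus compactness of $\spt\lambda$, not by Proposition \ref{continuity of sum}. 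Your pointwise-u.s.c.-plus-contradiction outline is salvageable once the nonemptiness step is fixed, but as written it gestures at a tool that isn't the one doing the work.
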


In particular, one can view this as a very similar statement to the equality of $\balpha_{K^*}$ and $\balpha_{L^*}$. To see this, consider two convex functions $f$ and $g$ on the real line. Let $\partial f$ and $\partial g$ be their subdifferential maps (subderivative) and suppose for any $x\in\mathbb R$ we have $\partial f(x)\cap \partial g(x)\neq \emps$. Then, $f$ is equal to $g$ up to a constant. This is essentially as saying that the integral of the right derivative of a convex function has to agree with the integral of the left derivative of a convex function. While there is no reference to this result one can look at Corollary 24.2.1 in \cite{Rock}. In particular, this implies that $\partial f(x)= \partial g(x)$ for all $x\in\mathbb R$. To see the similarity, one should view the inverse Gauss Image maps of bodies $K$ and $L$ as the gradient of the support functions of convex bodies $K$ and $L$. 

Note that, for any convex bodies $K$ and $L$, $\balpha_K$ and $\balpha_L$ are singular valued outside of the Lebesgue measure zero set. Thus, if $\spt \lambda=\sn$, then outside of the Lebesgue measure zero set $\balpha_{K^*}(u)\cap\balpha_{L^*}(u)\neq \emps$ implies $\balpha_{K^*}(u)=\balpha_{L^*}(u)$, which immediately establishes that $K$ is equal to $L$ up to scaling. This is exactly the uniqueness result established in \cite{GIP} and in the works of Aleksandrov \cite{Aleks1}, \cite{Aleks0}, \cite{Aleks}.

 In less formal language, Theorems \ref{main} and Theorem \ref{main 2}  claim that if $\lambda(K,\cdot)=\lambda(L,\cdot)$ then bodies are "equal" (up to scaling) on parts where sets of normals have positive measure and we can make any deformations to bodies at the boundaries where normals have measure zero. The formal statement for this intuition is in the next Theorem. Here, by $\spt\lambda$ we denote the usual support of submeasure $\lambda$, see Definition \ref{Definition of supporrt}. For the formal definition of rectifiable path connected set see Definition \ref{rectifiable path connected}.

\begin{thm}\label{main 2}
	Let $K,L\in\kno$. Suppose $\lambda(K,\cdot)=\lambda(L,\cdot)$ are finite Borel measures for a spherical submeasure $\lambda$, defined on the Lebesgue measurable subsets of $\sn$. Then on each rectifiable path connected component $D\subset \spt \lambda$, $K^*$ and $L^*$ are are equal up to a dilation. Alternatively, for each $v_1,v_2\in D$ we have \begin{equation}
		\frac{h_K(v_1)}{h_L(v_1)}=\frac{h_K(v_2)}{h_L(v_2)},
		\end{equation}
		where by $h_K$ and $h_L$ we denote the support functions of $K$ and $L$.
\end{thm}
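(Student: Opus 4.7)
The plan is to reformulate Theorem \ref{main 2} as the claim that the function $g := h_K/h_L$ (which is positive and Lipschitz on $\sn$ since $0$ lies in the interior of both $K$ and $L$) is constant on each rectifiable path connected component $D\subset \spt\lambda$; from this, the equality $h_K(v_1)/h_L(v_1) = h_K(v_2)/h_L(v_2)$ is immediate for any $v_1,v_2\in D$. The starting point is a pointwise identity coming from Theorem \ref{new additional main}: for every $v\in\spt\lambda$ we may pick some $x\in \balpha_{K^*,L^*}(v) = \balpha_{K^*}(v)\cap\balpha_{L^*}(v)$. Unwinding the standard polar duality $\balpha_{K^*}(v)=\balpha_K^{-1}(v)$, the condition $x\in\balpha_{K^*}(v)$ says precisely that $v$ is an outer unit normal to $K$ at the boundary point $\rho_K(x)\,x$, giving $\rho_K(x)\langle x,v\rangle = h_K(v)$. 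The analogous identity for $L$, divided by the one for $K$, yields the key relation
\[
g(v)=\frac{h_K(v)}{h_L(v)}=\frac{\rho_K(x)}{\rho_L(x)} \quad \text{for every } x\in\balpha_{K^*,L^*}(v).
\]

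Next I would show that $\nabla g$ vanishes at every $v\in\spt\lambda$ where both $h_K$ and $h_L$ are differentiable, which is true off a spherical-Lebesgue-null set. At such a $v$, the subdifferentials $\partial h_K(v), \partial h_L(v)$ reduce to $\{\nabla h_K(v)\}, \{\nabla h_L(v)\}$, and by the identity above applied to the unique $x\in\balpha_{K^*,L^*}(v)$ they must equal $\rho_K(x)\,x$ and $\rho_L(x)\,x$ respectively; in particular $\nabla h_K(v) = g(v)\,\nabla h_L(v)$. Substituting this into the elementary quotient rule $\nabla g = (h_L\nabla h_K - h_K\nabla h_L)/h_L^2$ then forces $\nabla g(v)=0$.

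To conclude, parametrize a rectifiable path $\gamma\colon[0,\ell]\to D$ from $v_1$ to $v_2$ by arc length, so $\gamma$ is Lipschitz and $f:=g\circ\gamma$ is Lipschitz on $[0,\ell]$, hence absolutely continuous; the theorem reduces to proving $f'(t)=0$ for a.e. $t$. The main obstacle is that $\gamma^{-1}$ of the spherical-Lebesgue-null non-differentiability set of $g$ can still carry positive one-dimensional Lebesgue measure, so the almost-everywhere vanishing of $\nabla g$ on $\sn$ does not pass through the chain rule automatically. To overcome this I would use subdifferential calculus for convex functions composed with Lipschitz paths: at any $t$ where $f'(t)$, $(h_K\circ\gamma)'(t)$, $(h_L\circ\gamma)'(t)$ all exist, these derivatives are represented as $\langle p_K(t),\gamma'(t)\rangle$ and $\langle p_L(t),\gamma'(t)\rangle$ for appropriate subgradients $p_K(t)\in\partial h_K(\gamma(t))$ and $p_L(t)\in\partial h_L(\gamma(t))$, while the previous step supplies parallel subgradients $p_K = g(\gamma(t))\,p_L$ coming from any $x\in\balpha_{K^*,L^*}(\gamma(t))$. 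The Lipschitz/continuity properties of $\balpha_{K^*,L^*}$ supplied by Proposition \ref{continuity of sum} and Theorem \ref{new additional main} should allow a measurably consistent selection of such parallel subgradients along the path, which makes $f'(t)$ vanish at every point of differentiability and closes the argument.
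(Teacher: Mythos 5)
Your approach is correct in outline and genuinely different from the paper's. The paper's proof avoids any appeal to differentiability or subdifferential calculus along the path: its key technical ingredient is Lemma \ref{derivative is zero}, a direct quantitative "zero-derivative along $\spt\lambda$" estimate, proved by explicit estimates on the inner-product identities coming from a common normal, and then Theorem \ref{main 21} closes via an arc-length reparametrization and a finite-cover telescoping argument. You instead reduce to showing $f'(t)=0$ a.e.\ for $f=g\circ\gamma$ using one-dimensional Lebesgue differentiation and subgradient calculus, which is a cleaner and arguably more conceptual route. The paper's route buys you explicit pointwise estimates with no measure-theoretic subtleties; yours buys a shorter, more standard-looking real-analysis argument once the final lemma is in place.

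However, your proposed fix for the flagged obstacle is misdirected. You worry about a "measurably consistent selection" of the common normal $x(t)\in\balpha_{K^*,L^*}(\gamma(t))$. No such selection is needed. The missing observation is the following: at a.e.\ $t$, the three derivatives $\gamma'(t)$, $(h_K\circ\gamma)'(t)$, $(h_L\circ\gamma)'(t)$ exist (Lebesgue differentiation for the Lipschitz functions $\gamma$, $h_K\circ\gamma$, $h_L\circ\gamma$), and at any such $t$, writing $v=\gamma(t)$, $d=\gamma'(t)$, the existence of the two-sided derivative $(h_K\circ\gamma)'(t)$ forces
\begin{equation}
	\max_{p\in\partial h_K(v)}\langle p,d\rangle=\min_{p\in\partial h_K(v)}\langle p,d\rangle,
\end{equation}
i.e.\ the pairing $\langle p,d\rangle$ is constant over the whole subdifferential $\partial h_K(v)=F(K,v)$, and $(h_K\circ\gamma)'(t)$ equals that common value. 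The same holds for $L$. Consequently you may evaluate the pairing at the particular subgradients $p_K=\rho_K(x)x$ and $p_L=\rho_L(x)x$ coming from \emph{any} $x\in\balpha_{K^*,L^*}(v)$, with no need for a measurable choice of $x$; since $p_K=g(v)p_L$ you get $(h_K\circ\gamma)'(t)=g(v)(h_L\circ\gamma)'(t)$, hence $f'(t)=0$, and absolute continuity of $f$ finishes the proof. Once that observation is inserted, your argument is complete and correct.
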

\begin{rmk}
	In particular, one can think about this in terms of tangential bodies. We also note that any connected open set on $\sn$ is automatically rectifiable path connected.
\end{rmk}

As already mentioned, if $\lambda$ is positive on open sets, Theorem \ref{main 2}  implies the uniqueness (up to dilation) of the solution for the Gauss Image Problem obtained in \cite{GIP} as well as the uniqueness (up to dilation) for the Aleksandrov's Problem. This also justifies the intuitive claim about separately scaling parts in \cite{Semenov WAC}, see section \ref{Section Intro}, and implies the uniqueness of the assignment functional, see Proposition 5.3 in \cite{Semenov GIP}.

The natural question for Theorem \ref{main 2} is whether the same result can hold just for any connected set $D\subset \spt \lambda$ without the need for rectifiable path connectedness. The problem here lies in the possible pathological behavior of support of $\lambda$, which might have a fractal nature. As an illustration of similar behavior, consider intricate Whitney's example of a $C^1$ function $f:\mathbb R^2\rightarrow \mathbb R$ with $\nabla f=0$ on a path connected set $D\subset \mathbb {R}^2$ such that $f$ is not constant on $D$. See \cite{Whitney} for details. We address this issue and other complications in the beginning of Section \ref{Section Appl}. It would be very interesting to see if something like this can happen for the radial Gauss Image map. That is, can one construct a measure $\lambda$ supported on a connected set $D$, such that $K^*$ is not a dilate of $L^*$ on $D$.


To conclude the introduction, let us quickly comment on the next chapters. 
Our main result of Section \ref{Section Prop} is variational Lipschitz continuity of the radial Gauss Image, Proposition \ref{continuity of sum}, which is the main ingredient in the mentioned mimic of the Hopf Theorem.  In Section \ref{Section Measure} we establish measure-theoretic notions for multivalued maps, and, in particular, talk about the different examples of possible definitions for almost everywhere equivalence of multivalued maps. We prove Theorem \ref{main} in Section \ref{Section Proof}. The proof for general bodies versus $C^1$ strictly convex bodies differs by usages of Lemma \ref{sub} instead of Lemma \ref{same for strictly}. The Lemma \ref{same for strictly} uses Hopf Theorem and the Lemma \ref{sub} relies on the variational Lipschitz result, Proposition \ref{continuity of sum}, from Section \ref{Section Prop}. In particular, the reader who is only interested in $C^1$ strictly convex result can skip the Sections \ref{Section Prop} and \ref{Section Measure} and address them only if needed. In Section \ref{Section Appl}, we turn to applications of Theorem \ref{main} and obtain Theorem \ref{new additional main} and Theorem \ref{main 2}. 


\section{Preliminaries}\label{Section Prel}
By $\sn$ we denote the unit sphere centered at 0 in $\rn$. We denote $\kn$ to be the set of convex bodies in $\rn$ (compact, convex subsets with nonempty interior in $\rn$). We denote $\kno\subset\kn$ to be those convex bodies that contain the origin in their interiors. If $\partial$ stands next to the set (as $\partial K$) it denotes the topological boundary of the set. If $\partial$ stands next to a convex function (as $\partial f$) it denotes the subdifferential. Given two set $A,B$ By $A\triangle B$ we denote their symmetric difference:
\begin{equation}
	A\triangle B := A\setminus B \cup B \setminus A.
\end{equation}

Given $K\in\kno$, let $x\in\partial K$ be a boundary point. \textit{The normal cone} at $x$ is defined by 
\begin{equation}
  N(K,x)=\{v\in\sn : (y-x)\cdot v\leq 0 \text{ for all } y\in K \}
\end{equation}
 \textit{The radial map} $r_K:\sn\rightarrow\partial K$ of $K$ is defined for $u\in \sn$ by $r_K(u)=ru\in\partial K$, where $r>0$. For $\omega \subset \sn$, \textit{the radial Gauss image} of $\omega$ is defined by
\begin{equation}
\balpha_K(\omega)=\bigcup_{x\in r_K(\omega)}N(K,x)\subset \sn.	
\end{equation}
The radial Gauss image $\balpha_K$ maps sets of $\sn$ to sets of $\sn$. Outside of a spherical Lebesgue measure zero set, the multivalued map $\balpha_K$ is singular valued. It is known that $\balpha_K$ maps Borel measurable sets to Lebesgue measurable sets, closed sets into closed sets. See \cite{S14} for both of these results.  We denote the restriction of $\balpha_K$ to the corresponding singular valued map by $\alpha_K$. For more details, see \cite{GIP}. 

Suppose $\lambda$ is a submeasure on $\sn$ (see Definition \ref{submeasure defenition}) defined on spherical Lebesgue measurable sets and $K\in\kno$. Then $\lambda(K,\cdot)$,  \textit{the Gauss image measure of $\lambda$ via $K$}, is a submeasure defined as the pullback of $\lambda$ via the map $\balpha_K$. That is, for each Borel $\omega\subset\sn$ we set 
\begin{equation}
  \lambda(K,\omega):=\lambda(\balpha_K(\omega)).
\end{equation}
We will say that $\mu$ is a Borel (Lebesgue) measure (or submeasure) if it is a measure (or submeasure) defined on Borel (Lebesgue) sets of $\sn$. In general, given a Lebesgue measure $\lambda$ and $K\in\kno$ it might happen that $\lambda(K,\cdot)$ is not a measure. For example, see \cite{Semenov GIP}. If $\lambda$ is absolutely continuous, $\lambda(K,\cdot)$ is always a Borel measure \cite{GIP}.

\textit{The radial function} $\rho_K:\sn\rightarrow \mathbb R$ is defined by: \begin{equation}
  \rho_K(u)=\max\{a : au\in K\}.
\end{equation}
In this case, $r_K(u)=\rho_K(u)u$. \textit{The support function} is defined by \begin{equation}
  h_K(x)=\max\{x\cdot y : y\in K\}.
\end{equation}
 For $K\in\kno$, we define its \textit{polar body} $K^*$ to be a convex body with support function $h_{K^*}:=\frac 1 {\rho_K}$. We denote by $r_K$ the radius of the largest ball contained in $K$ centered at $o$. Similarly, denote $R_K$ to be the radius of the smallest ball containing $K$ centered at $o$. Clearly, $r_K\leq R_K$. \textit{The support hyperplane} to $K$ with outer unit normal $v\in\sn$ is defined by \begin{equation}
 H_K(v)=\{x:x\cdot v=h_K(v)\}.	
 \end{equation}
We will use the following notation for hyperplanes and half-spaces \begin{equation}
	\begin{split}
		H^-(\alpha,v):= \{x:x\cdot v\leq\alpha\} \\
		H^+(\alpha,v):= \{x:x\cdot v\geq\alpha\} \\
		H(\alpha,v):=\{x:x\cdot v=\alpha\}.
	\end{split}
\end{equation}  By $F(K,v)$ we denote the facet of $K$ in the direction of $v$. That is,

\begin{equation}
	F(K,v):=H_{K}(v)\cap K.
\end{equation}
We define the \textit{the reverse radial Gauss Image Map} of $\omega\subset \sn$ as  

\begin{equation}
\balpha^*_K(\omega)=\{r_K^{-1}(x)\mid x\in \partial K \text{ and } x\in H_K(v) \text{ for some } v\in \omega \}.
\end{equation}
It is shown in \cite{HLYZ16}, that if $K\in\kno$ then for any $\omega\subset \sn $, $\balpha^*_K(\omega)=\balpha_{K^*}(\omega)$. We will use this fact without mentioning it. 

For $\omega \subset S^{n-1}$, define $\text{cone}\,\omega $ as
the {\it cone that $\omega $ generates}, as
\begin{equation}
  \text{cone}\,\omega = \{tu:\text{$t\ge 0$ and $u\in\omega$}\},
\end{equation}
and define $\hat \omega$ to be the {\it restricted cone that $\omega $ generates}, as
\begin{equation}
  \hat \omega =
\{tu : \text{$0\le t \le 1$ and $u\in \omega$}  \}.
\end{equation}
We say that $\omega \subset S^{n-1}$ is {\it spherically convex}, if the cone that $\omega $ generates is a nonempty, proper, convex subset of $\rn$.
Therefore,
 a spherically convex set in $\sn$ is nonempty and
 always contained in a closed hemisphere of $\sn$. We say that  $\omega \subset S^{n-1}$ is {\it spherically convex body} if it is spherically convex set with nonempty interior. Given $\omega\subset S^{n-1}$ contained in a closed hemisphere,
 {\it the polar set} $\omega^*$ is defined by:
\begin{equation}\label{polarset}
\begin{split}	
\omega^* &=
\bigcap_{u\in\omega}\{v\in S^{n-1} : u\cdot v\leq 0 \}. 
\end{split}
\end{equation}
We note that the polar set is always convex. If $\omega\subset\sn$ is a closed set, we define its {\it outer parallel set} $\omega_\alpha$ to be
\begin{equation}\label{outer parallel set}
	\omega_\alpha =
 \bigcup_{u\in \omega} \{v\in \sn : u\cdot v > \cos\alpha \}.
\end{equation}
By $\bar\omega$ we denote the closure of $\omega$ and by $\interior \omega$ its interior.

We define distance $d$ on $\sn$ to be arc-distance on a sphere. That is, given $u,v\in \sn$: \begin{equation}
  d(u,v)=\lVert u-v\rVert _{\sn}:=\arccos (uv)
\end{equation}
Similarly we define distance between a set $\omega\subset \sn$ and a point $u\in\sn$ as:

\begin{equation}
	d(u,\omega):=\inf\{d(u,v)\mid v\in \omega \}
\end{equation}
We define the Hausdorff distance between $\omega_1,\omega_2\subset \sn$ to be \begin{equation}
  d_H(\omega_1,\omega_2):=\max(\sup_{u\in\omega_1}d(u,\omega_2),\sup_{u\in\omega_2}d(\omega_1,u)) 
\end{equation}	
Any spherical Lebesgue submeasure $\mu$ on $\sn$ defines a symmetric difference pseudometric $d_\mu$ on a collection of Borel sets on a sphere, where the distance from set $A$ to $B$ is $\mu(A\triangle B)$. We denote this metric by:
\begin{equation}
	d_\mu(A,B):=\mu(A,B).
\end{equation}   

The next two definitions will appear in Section \ref{Section Prop}. Yet, for the reader's convenience, we copy them here. Given $t\in[0,1]$ we define the harmonic mean of $K,L\in\kno$ as \begin{equation}
		K\hat{+}_tL:=((1-t)K^*+tL^*)^*.
	\end{equation}  
By $P:\rn\rightarrow\sn$ we denote the projection of $\rn$ onto $\sn$. Given $t\in[0,1]$ we define the mean of spherically convex sets $\omega_1,\omega_2\in\sn$ contained in same open hemisphere as:
\begin{equation}
  \omega_1\tilde+_t\omega_2:=\{P((1-t)u+tv)\mid u\in\omega_1, v\in\omega_2\}.
\end{equation} 

We use books of Schneider \cite{S14}, Gruber \cite{Gruberbook} and Gardner \cite{G06book} as our references for classical results in convex geometry.

\section{Properties of the Radial Gauss Image Map}\label{Section Prop}

As already mentioned in Section \ref{Section Intro}, given body $K\in\kno$, the radial Gauss image $\balpha_K$ maps sets of $\sn$ into sets of $\sn$ and might be not single-valued. Thus, we need some alternative statement for the continuity of such maps. Recall the continuity of subdifferentials for convex functions: If $f$ is a proper convex function, $x_i\rightarrow x$ and for $x'_i\in\partial f(x_i)$, $x'_i\rightarrow x'$, then $x'\in\partial f(x)$. (See \cite{Rock}). We establish similar continuity properties for the radial Gauss Image map.

\begin{prop}[Continuity of the radial Gauss Image Map]\label{continuity}
	Given body $K\in\kno$, let $u_i$ be a sequence of points on $\sn$ converging to $u\in\sn$. Let $n_i$ be a sequence of points on $\sn$ such that $n_i\in \balpha_K(u_i)$. Then \begin{equation}\label{first part continuity}
		d(n_i,\balpha_K(u))\rightarrow 0
	\end{equation} 
	Moreover,
\begin{equation}
	d_H(\balpha_K(u_\eps),\balpha_K(u))\rightarrow 0 \text{ as } \eps \rightarrow 0.
\end{equation}
\end{prop}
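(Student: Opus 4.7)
The plan is to prove the first (pointwise) statement by a compactness-and-limit argument, exploiting the continuity of the radial map and the fact that the defining inequality for the normal cone passes to limits. The second (Hausdorff) statement will then follow from the first by a short contradiction argument combined with the definition of $\balpha_K$ applied to a set.

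First I would set $x_i := r_K(u_i)$ and $x := r_K(u)$. Because $K \in \kno$ the radial function $\rho_K$ is continuous on $\sn$, so $x_i = \rho_K(u_i) u_i \to \rho_K(u) u = x$ in $\rn$. Since $n_i \in \balpha_K(u_i) = N(K, x_i)$, the half-space inequality $n_i \cdot (y - x_i) \leq 0$ holds for every $y \in K$. I would then argue by contradiction: if $d(n_i, \balpha_K(u)) \not\to 0$, there exist $\delta > 0$ and a subsequence (relabel) with $d(n_i, \balpha_K(u)) \geq \delta$. By compactness of $\sn$, extract a further subsequence $n_{i_k} \to n^*$, pass to the limit in $n_{i_k} \cdot (y - x_{i_k}) \leq 0$ to get $n^* \cdot (y - x) \leq 0$ for every $y \in K$, and conclude $n^* \in N(K, x) = \balpha_K(u)$ — contradicting $d(n_{i_k}, \balpha_K(u)) \geq \delta$.

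For the Hausdorff statement, since $u \cdot u = 1 > \cos \eps$ we have $u \in u_\eps$, hence $\balpha_K(u) \subseteq \balpha_K(u_\eps)$ and one side of the Hausdorff distance vanishes automatically. For the other side, suppose $\sup_{n \in \balpha_K(u_\eps)} d(n, \balpha_K(u))$ does not tend to $0$ as $\eps \to 0$; then there exist $\delta > 0$, $\eps_k \to 0$, and $n_k \in \balpha_K(u_{\eps_k})$ with $d(n_k, \balpha_K(u)) \geq \delta$. Unfolding the definition $\balpha_K(u_{\eps_k}) = \bigcup_{v \in u_{\eps_k}} \balpha_K(v)$, pick $v_k \in u_{\eps_k}$ with $n_k \in \balpha_K(v_k)$; then $d(v_k, u) < \eps_k \to 0$, so $v_k \to u$, and the first part forces $d(n_k, \balpha_K(u)) \to 0$, a contradiction.

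The main obstacle is bookkeeping rather than analysis: because $\balpha_K$ is multivalued, one must carefully distinguish $\balpha_K(u)$ (the normal cone at a single boundary point $r_K(u)$) from $\balpha_K(u_\eps)$ (a union of normal cones over a whole cap), and keep track of which $v_k \in u_{\eps_k}$ produced each $n_k$ so that the pointwise statement can be invoked. Once the definitions are threaded properly, the only analytic input is the closed-graph behavior of normal cones under limits plus continuity of $\rho_K$, and no regularity on $K$ beyond $K \in \kno$ is needed.
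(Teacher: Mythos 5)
Your argument is correct, and for the pointwise statement it is genuinely different from (and more elementary than) the paper's. The paper dualizes: from $n_i\in\balpha_K(u_i)$ it deduces $u_i\in\balpha_{K^*}(n_i)$, sets $x_i=r_{K^*}(n_i)\in F(K^*,u_i)=\partial h(K^*,u_i)$, extracts a convergent subsequence in the compact set $\partial K^*$, and then invokes the continuity of the subdifferential of a convex function (cited from Rockafellar) to conclude that the limit lies in $\partial h(K^*,u)=F(K^*,u)$, finishing via bi-Lipschitzness of the radial map of $K^*$. You instead stay entirely on the primal side: using continuity of $\rho_K$ to get $x_i=r_K(u_i)\to r_K(u)=x$, you pass to the limit directly in the closed defining inequality $n_i\cdot(y-x_i)\le 0$ of $N(K,x_i)$ along a subsequence $n_{i_k}\to n^*$, concluding $n^*\in N(K,x)=\balpha_K(u)$. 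This avoids the detour through $K^*$ and the appeal to the subdifferential-continuity theorem, replacing both with the observation that $\sn$ is compact and that half-space constraints are stable under limits; it is shorter and self-contained. The Hausdorff half of your argument (noting $\balpha_K(u)\subseteq\balpha_K(u_\eps)$ to kill one side, then unwinding $\balpha_K(u_{\eps_k})=\bigcup_{v\in u_{\eps_k}}\balpha_K(v)$ to choose $v_k\to u$ and invoke the first part) is essentially identical to the paper's. No gaps.
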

\begin{rmk}
	This is not true if $d$ is substituted with Hausdorff distance.
\end{rmk}
\begin{proof}
Suppose \eqref{first part continuity} is not true. Then, there exist an $\eps>0$ and a subsequence $n_{i_j}$ such that $d(n_{i_j},\balpha_K(u))>\eps$. For notation simplicity, let's assume that this is the original sequence $n_i$. We will obtain a contradiction by constructing a subsequence of $n_i$ which converges to $\balpha_K(u)$. 

	First, we note that since $n_i\in \balpha_K(u_i)$ we have $u_i\in \balpha_{K^*}(n_i)$. Thus, $r_{K*}(n_i)\in F(K^*,u_i)=\partial h(K^*,u_i)$, where $\partial h(K^*,u_i)$ is the subbdiffrential of $h_{K^*}$ at a point $u_i$. For the last equality see \cite{S14}, Theorem 1.7.4. Let $x_i:=r_{K^*}(n_i)\in \partial K^*$. 
	 We have that $x_i\in \partial h(K^*,u_i)$.
	 
Since $\partial K^*$ is compact, for $x_i\in \partial K^*$, there exist subsequence $x_{i_j}$ converging to some $x\in \partial K^*$. Since $x_i\in \partial h(K^*,u_i)$ by continuity of subdiffrential we obtain that $x\in\partial h(K^*,u)=F(K^*,u)$ and, thus, $u\in \balpha_{K^*}(r_{K*}^{-1}(x))$. Therefore, $r_{K*}^{-1}(x)\in \balpha_K(u)$. Now, for subsequence $n_{i_j}=r_{K*}^{-1}(x_{i_j})$ since $x_{i_j}\rightarrow x$, we obtain $n_{i_j}\rightarrow r_{K*}^{-1}(x)$ since the radial map is bi-lipschitz and, thus, we have $d(n_{i_j},\balpha_K(u))\rightarrow 0$. Since for all $i$, $d(n_{i},\balpha_K(u))>\eps$ we obtain a contradiction.

We established \eqref{first part continuity}. Now for the last part, suppose it is not true. Since $u\in u_\eps$ there exist $\delta>0$ and a sequence of $\eps_i$ such that there exist $n_i\in \balpha_K(u_{\eps_i})$ for which $d(n_i,\balpha_K(u))>\delta$. Let $u_i\in u_{\eps_i}$ be such that $n_i\in \balpha_K(u_i)$. Since $u_i\rightarrow u$, from \eqref{first part continuity} we obtain $d(n_i,\balpha_K(u))\rightarrow 0$ which is contradiction.

\end{proof}

Note that for $K\in\kno$ and a Borel set $\omega\subset S^{n-1}$ we don't necessarily have that $\partial\balpha_K(\omega)=\balpha_K(\partial\omega)$. For example, take $K$ to be a regular cube and $\omega=\{v\}$ to be a vector $v$ pointing at the vertex of the cube. Then $\partial\balpha_K(\omega)=\partial\balpha_K(v)\varsubsetneq \balpha_K(v)=\balpha_K(\partial\omega)$. We obtain the following proposition:

\begin{prop}\label{boundary subset}
For any convex body $K\in\kno$, and any Borel set $\omega\subset S^{n-1}$ $\partial\balpha_K(\omega)\subset\balpha_K(\partial\omega)$.
\end{prop}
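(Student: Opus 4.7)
The plan is to fix $n \in \partial \balpha_K(\omega)$ and exhibit some $w \in \partial \omega$ with $n \in \balpha_K(w)$; since $w \in \balpha_{K^*}(n)$ is equivalent to $n \in \balpha_K(w)$, this amounts to showing that the fibre $A := \balpha_{K^*}(n)$ meets $\partial \omega$. I will produce one point of $A$ in $\bar \omega$ and another in $\sn \setminus \interior \omega$, and then conclude by the connectedness of $A$.

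For the first point, unpack $n \in \overline{\balpha_K(\omega)}$: pick $n_i \to n$ with $n_i \in \balpha_K(\omega)$ and corresponding $u_i \in \omega$ with $n_i \in \balpha_K(u_i)$. Compactness of $\sn$ lets me pass to a subsequence with $u_i \to u \in \bar \omega$. Proposition \ref{continuity} applied to $K$ then gives $d(n_i, \balpha_K(u)) \to 0$, and since $\balpha_K(u)$ is closed I deduce $n \in \balpha_K(u)$, that is $u \in A \cap \bar \omega$. For the second point, use $n \notin \interior \balpha_K(\omega)$ to select $m_j \to n$ with $m_j \notin \balpha_K(\omega)$; pick any $v_j \in \balpha_{K^*}(m_j)$ and note that $v_j \in \omega$ would force $m_j \in \balpha_K(v_j) \subset \balpha_K(\omega)$, a contradiction. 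Applying Proposition \ref{continuity} to $K^*$ and extracting a convergent subsequence yields $v_j \to v \in A$ with $v \in \overline{\sn \setminus \omega} = \sn \setminus \interior \omega$.

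It remains to observe that $A$ is connected: it is the radial projection onto $\sn$ of the face $F(K,n) = H_K(n) \cap K$, which is convex and lies on the hyperplane $H_K(n)$ that misses the origin because $h_K(n) > 0$. Hence $A$ is the continuous image of a convex set, and in particular connected (in fact spherically convex, contained in an open hemisphere). If $A \cap \partial \omega$ were empty, $A$ would split into the two relatively open pieces $A \cap \interior \omega$ and $A \cap (\sn \setminus \bar \omega)$, both non-empty by the previous paragraph (they contain $u$ and $v$ respectively), contradicting connectedness. Therefore some $w \in A \cap \partial \omega$ exists, giving $n \in \balpha_K(w) \subset \balpha_K(\partial \omega)$.

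I expect the main obstacle to be exactly the case where the first sequence $u_i$ limits to a $u$ lying in $\interior \omega$ rather than $\partial \omega$; a naive one-sequence argument stops there. Introducing the complementary sequence $m_j$ of \emph{non-image} points and leveraging the connectedness of the preimage fibre $\balpha_{K^*}(n)$ is the essential device, and it is also where the multivaluedness of $\balpha_K$ enters in a non-cosmetic way, since for single-valued strictly convex $K^*$ the fibre $A$ collapses to a point and the conclusion would be nearly immediate.
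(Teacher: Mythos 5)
Your proof is correct and takes essentially the same approach as the paper: extract a sequence from inside $\balpha_K(\omega)$ and a sequence from its complement, pass to limits in the fibre $\balpha_{K^*}(n)$, and then use the spherical convexity of that fibre to force an intersection with $\partial\omega$. The only cosmetic differences are that you phrase the final step as a connectedness/separation argument rather than tracing the explicit geodesic arc, and for the ``outside'' sequence you apply Proposition~\ref{continuity} to $K^*$ rather than to $K$ -- both valid by duality.
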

\begin{proof}
Suppose $\partial\balpha_K(\omega)$ is not empty. Let $n\in \partial\balpha_K(\omega)$. Then, there exists a sequence of vectors $n_i\in \sn$ with $n_i\notin\balpha_K(\omega)$ converging to $n$. Let $u_i$ be any sequence such that $u_i\in\balpha_K^{-1}(n_i)$. Note $u_i\notin\omega$. By compactness, some subsequence of $u_i$, $u_{i_j}$ converge to some $u\in\sn$. Then by continuity of $\balpha_K$, Proposition \ref{continuity}, we obtain that $n\in\balpha_K(u)$, as $\balpha_K(u)$ is a closed set. Since $u_i\notin\omega$ we have that $u\notin\mathring\omega$. 
 
 Suppose $u\notin\partial\omega$ as otherwise we are done. Similarly, we obtain sequence $u'_{i_j}\rightarrow u'$, where $u'_{i_j}\in \omega$ and $n\in \balpha_K(u')$. Suppose as well that $u'\notin\partial\omega$, as otherwise we are done. Now we have two points $u,u'$ sharing the same normal vector $n$ such that $u\in \sn \setminus \bar  \omega$ and $u'\in \mathring \omega$. Since $F(K,n)$ is a convex subset, we obtain that $r_K^{-1}(F(K,n))$ is a spherically convex set containing vectors $u,u'$. Thus, $n$ is normal to any spherical convex combination of $u$ and $u'$. Since $u\in \sn \setminus \bar  \omega$ and $u'\in \mathring \omega$, arc from $u$ to $u'$ passes through $\partial\omega$ and, thus, $n\in \balpha_K(\partial\omega)$.

\end{proof}

We recall the definition of the harmonic mean of two bodies. The definition can be seen in \cite{Firey} or more recently in \cite{Lp Aleks}. We also note  that given vector $u$, $\balpha_{K\hat +_tL}(u)$ is contained in same open hemisphere for all $t$. 

\begin{defi}\label{Defenition of Harmonic Mean Variation}
	Given $t\in[0,1]$ we define the harmonic mean of $K,L\in\kno$ as \begin{equation}
		K\hat{+}_tL:=((1-t)K^*+tL^*)^*.
	\end{equation}  
\end{defi}

It turns out that the radial Gauss Image map behaves in a particularly nice way under the harmonic mean variation. In Proposition \ref{continuity of sum} we are going to establish the Lipschitz properties of this variation. Yet, let us mention that these properties are non needed for the proof of Theorem \ref{main} if one is willing to restrict the theorem to $C^1$ convex bodies with strictly convex duals. If the reader only cares about this case, they can move to the next section immediately. 

We define the mean of convex sets contained in the same open hemisphere as follows:
\begin{defi}
Given $t\in[0,1]$ we define the mean of spherically convex sets $\omega_1,\omega_2\in\sn$ contained in same open hemisphere as:
\begin{equation}
  \omega_1\tilde+_t\omega_2:=\{P((1-t)u+tv)\mid u\in\omega_1, v\in\omega_2\},
\end{equation}
where $P:\rn\rightarrow\sn$ is a projection. 
\end{defi}
In general for two points $u_1,u_2\in\sn$ contained in open hemisphere, $u_1\tilde+_tu_2$
 is the parametrization of the geodesic segment through $u_1$ and $u_2$. Note, that this is not arc-length parametrization.

Next, we establish several identities which show that you can think about $\balpha_{K\hat +_tL}(\omega)$ as a variation from $\balpha_K(\omega)$ to $\balpha_L(\omega)$ along geodesic segments.  

\begin{prop}\label{ess properties} Suppose $K,L\in\kno$. Let $\omega\subset \sn$ be a spherically convex set contained in an open hemisphere and let $u$ be a unit vector. Then the following identities hold:
\begin{enumerate}
	\item $\balpha_{K\hat +_tL}(u)=\{P\big((1-t)v_1+tv_2)\big)\mid v_1\in F(K^*,u), v_2\in F(L^*,u)  \}$ 
	 \item $\balpha_{K\hat +_0L}(\omega)=\balpha_K(\omega)\tilde+_0\balpha_L(\omega)=\balpha_K(\omega)$
	 \item $\balpha_{K\hat +_1L}(\omega)=\balpha_K(\omega)\tilde+_1\balpha_L(\omega)=\balpha_L(\omega)$
	 \item $\bigcup_{t\in(0,1)}\balpha_{K\hat +_tL}(\omega)=\bigcup_{u\in\omega}\bigcup_{t\in(0,1)}\balpha_K(u)\tilde+_t\balpha_L(u)$
\end{enumerate}
\end{prop}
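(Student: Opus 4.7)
The plan is to derive all four identities from a single duality fact,
\begin{equation*}
\balpha_M(u) \;=\; P\bigl(F(M^*, u)\bigr), \qquad M \in \kno,\ u \in \sn,
\end{equation*}
which is the content of $\balpha^*_{M^*} = \balpha_M$ (stated in the preliminaries, with $K$ replaced by $M^*$) applied to the singleton $\{u\}$ and unpacked as $\balpha^*_{M^*}(u) = \{r_{M^*}^{-1}(x) : x \in F(M^*, u)\} = P(F(M^*,u))$. The second ingredient is the standard behavior of faces under Minkowski combinations,
\begin{equation*}
F(A+B, u) = F(A,u) + F(B,u), \qquad F(cA,u) = cF(A,u) \quad (c \ge 0),
\end{equation*}
both immediate from $h_{A+B}=h_A+h_B$ and $h_{cA}=ch_A$.

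For (1), I would apply the duality identity to $M := K\hat+_tL$, whose polar is $(1-t)K^* + tL^*$ by definition. The two face identities combine to give $F((1-t)K^*+tL^*,u) = (1-t)F(K^*,u) + tF(L^*,u)$, and projecting onto $\sn$ via $P$ yields (1) directly. Items (2) and (3) fall out by specializing $t = 0$ and $t = 1$: at $t=0$ one has $K\hat+_0L = (K^* + \{0\})^* = (K^*)^* = K$, so $\balpha_{K\hat+_0L}(\omega) = \balpha_K(\omega)$; while on the spherical-mean side $P((1-0)u + 0\cdot v) = P(u) = u$ for $u \in \sn$, so $\balpha_K(\omega)\tilde+_0\balpha_L(\omega) = \balpha_K(\omega)$ (using that $\balpha_L(\omega) \neq \emps$ since $\omega \neq \emps$ and $L \in \kno$). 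The $t=1$ case is symmetric.

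For (4), after interchanging unions it suffices to prove, for each fixed $u \in \omega$,
\begin{equation*}
\bigcup_{t \in (0,1)} \balpha_{K\hat+_tL}(u) \;=\; \bigcup_{t \in (0,1)} \balpha_K(u)\tilde+_t\balpha_L(u).
\end{equation*}
The key observation is a geodesic reparametrization: for unit vectors $n_1, n_2$ in a common open hemisphere and any $a,b > 0$, the projection $P(an_1+bn_2)$ lies on the open spherical geodesic arc from $n_1$ to $n_2$, and as the ratio $a/b$ sweeps $(0,\infty)$ the entire open arc is traced out. Using (1) together with $\balpha_K(u) = P(F(K^*,u))$ and $\balpha_L(u) = P(F(L^*,u))$, write $v_i = |v_i|\,n_i$ with $n_1 = P(v_1) \in \balpha_K(u)$ and $n_2 = P(v_2) \in \balpha_L(u)$; then both families $\{P((1-t)v_1+tv_2) : t \in (0,1)\}$ and $\{P((1-t)n_1+tn_2) : t \in (0,1)\}$ coincide with that open geodesic. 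Taking the union on the left over $(v_1,v_2) \in F(K^*,u)\times F(L^*,u)$ and equivalently on the right over $(n_1,n_2) \in \balpha_K(u)\times\balpha_L(u)$ produces (4).

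The main obstacle is essentially non-existent: once the duality identity and Minkowski-face additivity are in hand, all four items are mechanical, and the only new content is the elementary geodesic-reparametrization observation for (4), which holds because any positive combination of $n_1$ and $n_2$ lies in $\mathrm{span}(n_1,n_2)$ and therefore projects into the open arc $P(\{an_1+bn_2 : a,b>0\})$.
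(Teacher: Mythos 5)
Your proposal is correct and is essentially the same argument as the paper's: identity (1) via $\balpha_M(u) = P(F(M^*,u))$ and Minkowski-face additivity $F(A+B,u)=F(A,u)+F(B,u)$, (2) and (3) by specializing $t=0,1$, and (4) by the observation that replacing $v_i$ by $P(v_i)$ only reparametrizes the same open geodesic arc, so the unions over $t\in(0,1)$ agree. The paper phrases the reparametrization step as $\bigcup_{t\in(0,1)}P((1-t)v_1+tv_2)=\bigcup_{t\in(0,1)}P((1-t)\frac{v_1}{\lVert v_1\rVert}+t\frac{v_2}{\lVert v_2\rVert})$, which is exactly your geodesic argument in slightly different notation.
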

\begin{rmk}
While it is tempting to say $\balpha_{K\hat +_tL}(u)=\balpha_K(u)\tilde+_t\balpha_L(u)$ for any $t\in[0,1]$, it doesn't hold in general. See \eqref{rmk explanation}.
\end{rmk}

\begin{proof}

The second and the third identities are immediate from the definitions. Since for any convex bodies $C,D$: $F(C+D,u)=F(C,u)+F(D,u)$, we obtain the first identity:
\begin{equation}\label{hat explanation}\begin{split}
  \balpha_{K\hat +_tL}(u)=\balpha^*_{(K\hat +_tL)^*}(u)&=\balpha^*_{(1-t)K^*+tL^*}(u)\\&=P\circ F((1-t)K^*+tL^*,u)\\&=P((1-t)F(K^*,u)+tF(L^*,u))\\&=\{P\big((1-t)v_1+tv_2)\big)\mid v_1\in F(K^*,u), v_2\in F(L^*,u)\}.\\
  \end{split}
\end{equation} 

For the last one, we note that $\tilde+$ and $\hat+$ just define different arc-length parameterizations, and therefore images might disagree for some $t$, but they will agree for the union of all $t\in(0,1)$. More formally, from \eqref{hat explanation} we obtain that: 
\begin{equation}\label{comp1}
\begin{split}
\bigcup_{t\in(0,1)}\balpha_{K\hat +_tL}(\omega)&=	\bigcup_{u\in\omega}\bigcup_{t\in(0,1)}\{P\big((1-t)v_1+tv_2)\big)\mid v_1\in F(K^*,u), v_2\in F(L^*,u)  \}\\&=\bigcup_{u\in\omega}\bigcup_{\substack{v_1\in F(K^*,u) \\ v_2\in F(L^*,u)} }\bigcup_{t\in(0,1)}P((1-t)v_1+tv_2)
\end{split}
\end{equation}
Yet, since $\frac{v_1}{\lVert v_1\rVert }$ and $\frac{v_1}{\lVert v_1\rVert }$ are contained in the same open hemisphere we have: \begin{equation}\label{rmk explanation}
  \bigcup_{t\in(0,1)}P((1-t)v_1+tv_2)=\bigcup_{t\in(0,1)}P((1-t)\frac{v_1}{\lVert v_1\rVert }+t\frac{v_2}{\lVert v_2\rVert})
\end{equation}
Again, we note that the above is not necessarily true for specific $t$ but it is true for the union of $t\in(0,1)$. Therefore, \eqref{comp1} implies that \begin{equation}
 	\bigcup_{t\in(0,1)}\balpha_{K\hat +_tL}(\omega)=\bigcup_{u\in\omega}\bigcup_{\substack{v'_1\in \balpha^*_{K^*}(u) \\ v'_2\in \balpha^*_{L^*}(u) }}\bigcup_{t\in(0,1)}P((1-t)v'_1+tv'_2)
 \end{equation}
where $v'_1=\frac{v_1}{\lVert v_1\rVert }$ and $v'_2=\frac{v_2}{\lVert v_2\rVert }$. From the last equation by recalling the definition of $\tilde+$ we obtain the third statement. 
\end{proof}

We are now going to establish Lipschitz properties for $\tilde+$ and $\hat+$.

\begin{lem}\label{lip harm}
	Given $v\in\sn$ and $\alpha>0$, suppose we are given $u_1,u_2$ not equal to zero such that $\frac{u_1}{\lVert u_1\rVert },\frac{u_2}{\lVert u_2\rVert }\in v_{\frac \pi 2 - \alpha}$. Then $g(t):[0,1]\rightarrow  P((1-t)u_1+tu_2)$ is Lipschitz continuous  with respect to distance on the sphere with constant $L=\frac {2}{\sin(\alpha)}\max ({\frac {\lVert u_1\rVert }{\lVert u_2\rVert }},{\frac {\lVert u_2\rVert }{\lVert u_1\lVert }})$. That is,
	\begin{equation}
		d(g(t_1),g(t_2))\leq L \lVert t_1-t_2\rVert
	\end{equation}
\end{lem}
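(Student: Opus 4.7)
My plan is to reduce Lipschitz control of $g$ to a uniform bound on the Euclidean speed $\lVert g'(t)\rVert$. Since $g$ takes values in $\sn$, the arc length along $g$ equals $\int \lVert g'(s)\rVert\,ds$, so a uniform bound $\lVert g'(t)\rVert \leq L$ on $[0,1]$ yields $d(g(t_1),g(t_2))\leq L\lvert t_1-t_2\rvert$ directly.

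Writing $f(t)=(1-t)u_1+tu_2$ so that $g(t)=f(t)/\lVert f(t)\rVert$, differentiation together with Lagrange's identity gives
\[
\lVert g'(t)\rVert^{2}=\frac{\lVert f(t)\rVert^{2}\lVert f'(t)\rVert^{2}-(f(t)\cdot f'(t))^{2}}{\lVert f(t)\rVert^{4}}=\frac{\lVert f(t)\wedge f'(t)\rVert^{2}}{\lVert f(t)\rVert^{4}}.
\]
The key observation is that by bilinearity $f(t)\wedge f'(t)=((1-t)u_1+tu_2)\wedge(u_2-u_1)=u_1\wedge u_2$, which is independent of $t$. Hence $\lVert g'(t)\rVert=\lVert u_1\rVert\lVert u_2\rVert\sin\theta/\lVert f(t)\rVert^{2}$, where $\theta$ denotes the angle between $u_1$ and $u_2$, and the whole lemma reduces to a lower bound on $\lVert f(t)\rVert^{2}$.

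The cap hypothesis supplies two independent estimates: by the spherical triangle inequality $\theta\leq\pi-2\alpha$, and by the definition $u_i\cdot v\geq\lVert u_i\rVert\sin\alpha$, whence $\lVert f(t)\rVert\geq f(t)\cdot v\geq((1-t)\lVert u_1\rVert+t\lVert u_2\rVert)\sin\alpha$. To reach the stated $1/\sin\alpha$ (rather than the weaker $1/\sin^{2}\alpha$ that a uniform application of the cap bound would give), I would split on $\theta$. When $\theta\leq\pi/2$ one has $u_1\cdot u_2\geq 0$, so $\lVert f(t)\rVert^{2}\geq(1-t)^{2}\lVert u_1\rVert^{2}+t^{2}\lVert u_2\rVert^{2}$; minimizing this quadratic in $t$ gives $\lVert f(t)\rVert^{2}\geq\lVert u_1\rVert^{2}\lVert u_2\rVert^{2}/(\lVert u_1\rVert^{2}+\lVert u_2\rVert^{2})$, and substitution plus $\sin\theta\leq 1$ yields $\lVert g'(t)\rVert\leq\lVert u_1\rVert/\lVert u_2\rVert+\lVert u_2\rVert/\lVert u_1\rVert\leq L$. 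When $\theta>\pi/2$, a monotonicity check shows that the worst $t$ occurs at the foot of perpendicular (interior subcase) or at an endpoint (otherwise); in the latter the bound $(\max/\min)\sin\theta\leq L$ is immediate, and in the former one reduces to the extremal $\theta=\pi-2\alpha$, where $\sin(\pi-2\alpha)=2\sin\alpha\cos\alpha$ and the elementary inequality $(1+r)^{2}-4r\sin^{2}\alpha\leq 4\cos\alpha$ (with $r=\min(\lVert u_i\rVert)/\max(\lVert u_i\rVert)\in[0,1]$) produces exactly $\lVert g'(t)\rVert\leq L$.

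The main obstacle is precisely this sharpening of the constant from $\sin^{2}\alpha$ to $\sin\alpha$: a purely uniform use of the cap estimate on $\lVert f(t)\rVert$ delivers only $\lVert g'(t)\rVert\leq\max(\lVert u_i\rVert)/(\min(\lVert u_i\rVert)\sin^{2}\alpha)$, which exceeds $L$ whenever $\sin\alpha<1/2$. The case split is the device that saves the missing factor of $\sin\alpha$, invoking the orthogonality estimate in the $\theta\leq\pi/2$ regime (to bypass the cap condition) and the tighter angle bound $\theta\leq\pi-2\alpha$ in the $\theta>\pi/2$ regime (to extract $\sin\theta\leq\sin 2\alpha\leq 2\sin\alpha\cos\alpha$).
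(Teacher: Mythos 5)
Your proposal is correct and takes a genuinely different route from the paper's. Both proofs ultimately bound the instantaneous rate of change of the spherical curve $g$, but the mechanics differ. The paper fixes $g(0)$ as a reference, sets $\gamma(t)=d(g(0),g(t))$, derives the law-of-sines relation $t\lVert u_1-u_2\rVert = \lVert u_1\rVert \sin\gamma(t)/\sin(\theta+\gamma(t))$ in the plane triangle $[O,u_1,v(t)]$ (with $\theta$ the angle at $u_1$), and uses the implicit function theorem to get $\gamma'(t)=\frac{\lVert u_1-u_2\rVert}{\lVert u_1\rVert}\frac{\sin^2(\theta+\gamma(t))}{\sin\theta}$; the case split is on whether $\theta\leq\pi-\alpha$ or $\theta>\pi-\alpha$, using the a priori bound $\theta>\alpha$ coming from the cap condition and the normalization $\lVert u_1\rVert\leq\lVert u_2\rVert$. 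You instead observe the wedge-product invariance $f(t)\wedge f'(t)=u_1\wedge u_2$, giving $\lVert g'(t)\rVert=\lVert u_1\rVert\lVert u_2\rVert\sin\theta'/\lVert f(t)\rVert^2$ with $\theta'$ the angle between $u_1$ and $u_2$; this reduces everything to a lower bound on $\lVert f(t)\rVert^2$, which you handle by splitting on $\theta'\lessgtr\pi/2$, minimizing the quadratic $\lVert f(t)\rVert^2$, and for $\theta'>\pi/2$ observing that $h(\theta')=\frac{(1+r^2)-2r\cos\theta'}{\sin\theta'}$ is increasing and so attains its sup at $\theta'=\pi-2\alpha$, after which the scalar inequality $(1+r)^2-4r\sin^2\alpha\leq 4\cos\alpha$ (valid for $r\in[0,1]$, $\alpha<\pi/4$, with equality tendency at $r=1$) finishes. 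One can check that your $\lVert g'(t)\rVert$ coincides with the paper's $\gamma'(t)$, so the two derivations compute the same quantity. Your route is arguably cleaner conceptually: the invariant $f\wedge f'$ removes the need for a reference point and the implicit function theorem, and reduces the geometry to minimizing a quadratic; the paper's route makes the appearance of the $1/\sin\alpha$ factor more transparent (via $\theta>\alpha$). Two small points worth spelling out if you flesh this into a full proof: (i) the identity $d(g(t_1),g(t_2))\leq\int\lVert g'\rVert$ uses that the arc-length metric on $S^{n-1}$ agrees with the ambient Euclidean speed of the curve; and (ii) in the $\theta'>\pi/2$ subcase the foot of perpendicular always lies in $(0,1)$ (since $u_1\cdot u_2<0$ forces both numerator and denominator of $t^*$ to align), so the "endpoint" branch there is vacuous.
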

\begin{proof}
If $\frac{u_1}{\lVert u_1\rVert }=\frac{u_2}{\lVert u_2\rVert }$ then the statement trivially holds as $g(t)$ is constant. Suppose not. Without loss of generality assume that $\lVert u_1\rVert \leq\lVert u_2\rVert $.  Let $O$ be center of coordinates and $v(t):=(1-t)u_1+tu_2$. Note that $g(t)=P(v(t))$. We consider triangle in $\rn$ with vertices $O$, $u_1$ and $v(t)$. Let $\theta$ be an angle opposite to side $[\text{O},v(t)]$ an $\gamma(t)$ be an angle in this triangle opposite to the side $[u_1,v(t)]$. 

Since $u_1,u_2\in v_{\frac \pi 2 - \alpha}$ we have that as $t$ increases from 0 to 1, $\gamma$ increases from 0 to some angle $\gamma(1)$ which is less than $\pi - 2\alpha$. Since $\lVert u_1\rVert \leq\lVert u_2\rVert $ and $\gamma(1)<\pi - 2\alpha$ we obtain that $\theta>\alpha$. Clearly, $\theta<\pi$. We note that $\gamma(t)=d(g(0),g(t))$. We also note that $\lVert u_1-v(t)\rVert =t\lVert u_1-u_2\rVert $. 

Now, from the law of sines for the triangle $[O,u_1,v(t)]$ with $0<t\leq 1$ we obtain: 
\begin{equation}\begin{split}
  \frac {\lVert u_1-v(t)\rVert }{\sin(\gamma(t))}&=\frac{\lVert u_1\rVert }{\sin(\pi-\theta-\gamma(t))} \Leftrightarrow \\
  {t\lVert u_1-u_2\rVert }&=\lVert u_1\rVert \frac{{\sin(\gamma(t))}}{\sin(\pi-\theta-\gamma(t))} \Leftrightarrow \\
  {t\lVert u_1-u_2\rVert }&=\lVert u_1\rVert \frac{{\sin(\gamma(t))}}{\sin(\theta+\gamma(t))}
\end{split}
\end{equation}
Note that the last equation trivially holds for $t=0$, so it holds for all $t\in[0,1]$.

Let $f(t,\gamma):={t\lVert u_1-u_2\rVert _{\rn}}-\lVert u_1\rVert \frac{{\sin(\gamma)}}{\sin(\theta+\gamma)}$. Since $\theta$ and $\gamma(1)$ are two angles in non-degenerate triangle, $0<\gamma(t)+\theta<\pi$ for $t\in[0,1]$, and, hence, $f(t,\gamma)$ is continuously differentiable around $(t,\gamma(t))$ for any $t\in[0,1]$. Moreover for $(t,\gamma(t))$ with $t\in[0,1]$:

\begin{equation}
\begin{split}
	\frac {\partial f}{\partial \gamma} &= - \lVert u_1\rVert \frac{\cos(\gamma)\sin(\theta+\gamma)+\sin(\gamma)\cos(\theta+\gamma)}{\sin^2(\theta+\gamma)}\\&=- \lVert u_1\rVert \frac{\sin(\theta+\gamma-\gamma)}{\sin^2(\theta+\gamma)} \\&= - \lVert u_1\rVert \frac{\sin(\theta)}{\sin^2(\theta+\gamma)}\neq 0 \\
	\frac {\partial f}{\partial t} &= \lVert u_1-u_2\rVert _{\rn}
	\end{split}
\end{equation} 
Therefore, using implicit function theorem and $\lVert u_1\rVert \leq\lVert u_2\rVert $ we obtain:
\begin{equation}\label{diff gamma}
\begin{split}
	\gamma'(t)&=\frac{\lVert u_1-u_2\rVert _{\rn}}{\lVert u_1\rVert }\frac{\sin^2(\theta+\gamma(t))}{\sin(\theta)}\leq  \frac {2\lVert u_2\rVert }{\lVert u_1\rVert }\frac{\sin^2(\theta+\gamma(t))}{\sin(\theta)} \Leftrightarrow \\ 
	\gamma'(t)&\leq \frac {2\sin^2(\theta+\gamma(t))}{\sin(\theta)}\max ({\frac {\lVert u_1\rVert }{\lVert u_2\rVert }},{\frac {\lVert u_2\rVert }{\lVert u_1\rVert }})
	\end{split}
\end{equation}
Note that the last equation similarly holds for $\lVert u_1\rVert \geq\lVert u_2\rVert $.


We recall that $\alpha<\theta<\pi$ and $0\leq\gamma<\pi - 2\alpha$. Now we analyze two cases. First, assume that $\theta\leq\pi-\alpha$. Then we get that 
\begin{equation}\label{diff gamma 2}
	\frac {2\sin^2(\theta+\gamma(t))}{\sin(\theta)}\leq \frac 2 {\sin(\alpha)}
\end{equation}
and the statement follows from \eqref{diff gamma} and \eqref{diff gamma 2} combined. Suppose now, that $\theta>\pi-\alpha$. In particular, $\theta>\frac \pi 2$. This, combined with $0<\gamma(t)+\theta<\pi$ implies that 
\begin{equation}
  \frac{\sin(\theta+\gamma)}{\sin(\theta)}\leq 1
\end{equation}
Therefore,  \eqref{diff gamma} for this case implies the the following:
\begin{equation}
\gamma'=\frac {2\sin^2(\theta+\gamma(t))}{\sin(\theta)}\max ({\frac {\lVert u_1\rVert }{\lVert u_2\rVert }},{\frac {\lVert u_2\rVert }{\lVert u_1\rVert }})
\leq2\max ({\frac {\lVert u_1\rVert }{\lVert u_2\rVert }},{\frac {\lVert u_2\rVert }{\lVert u_1\rVert }})<\frac {2}{\sin(\alpha)}\max ({\frac {\lVert u_1\rVert }{\lVert u_2\rVert }},{\frac {\lVert u_2\rVert }{\lVert u_1\rVert }})
\end{equation}

\end{proof}

The next proposition is essential.

\begin{prop}\label{continuity of sum}
Given any $\omega\subset S^{n-1}$, $\balpha_{K\hat{+}_tL}(\omega):[0,1]\rightarrow \sn$ is Lipschitz continuous map from $t$ to sets on sphere equipped with Hausdorff distance $d_H$:\begin{equation}\label{3.160}
	d_H(\balpha_{K\hat{+}_{t_1}L}(\omega),\balpha_{K\hat{+}_{t_2}L}(\omega))\leq2 \max (\frac {R_K} {r_K}, \frac {R_L} {r_L})\max (\frac {R_K} {r_L}, \frac {R_L} {r_K})|t_1-t_2|.
\end{equation} 
\end{prop}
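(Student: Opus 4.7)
The plan is to reduce the claim pointwise to Lemma \ref{lip harm} using identity (1) of Proposition \ref{ess properties}, which writes
\[
\balpha_{K\hat{+}_tL}(u)=\{P((1-t)v_1+tv_2):v_1\in F(K^*,u),\ v_2\in F(L^*,u)\}
\]
for every $u\in\sn$. The Hausdorff bound on $\omega$ will then follow by taking a supremum over $u\in\omega$.

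First I would obtain uniform control on the norms and angular position of $v_1,v_2$ whenever $v_1\in F(K^*,u)$ and $v_2\in F(L^*,u)$. The inclusion $r_K B\subseteq K\subseteq R_K B$ yields $h_{K^*}(u)=1/\rho_K(u)\in[1/R_K,1/r_K]$, and the polar inclusion $K^*\subseteq(1/r_K)B$ forces $\lVert v_1\rVert\in[1/R_K,1/r_K]$; the analogous bounds hold for $v_2$ in terms of $r_L,R_L$. Combining $v_1\cdot u=h_{K^*}(u)\geq 1/R_K$ with $\lVert v_1\rVert\leq 1/r_K$ gives $\frac{v_1}{\lVert v_1\rVert}\cdot u\geq r_K/R_K$, and similarly $\frac{v_2}{\lVert v_2\rVert}\cdot u\geq r_L/R_L$, so both normalized vectors lie in the hemisphere $u_{\pi/2-\alpha}$ whenever $\sin\alpha<\min(r_K/R_K,r_L/R_L)$; since $K,L\in\kno$, such an $\alpha>0$ exists and $\sin\alpha$ can be chosen arbitrarily close to $\min(r_K/R_K,r_L/R_L)$.

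Next, Lemma \ref{lip harm} applied with $u_1=v_1$, $u_2=v_2$ shows that the curve $s\mapsto P((1-s)v_1+sv_2)$ is Lipschitz in $s\in[0,1]$ with constant
\[
\frac{2}{\sin\alpha}\max\!\left(\frac{\lVert v_1\rVert}{\lVert v_2\rVert},\frac{\lVert v_2\rVert}{\lVert v_1\rVert}\right)\leq 2\max\!\left(\frac{R_K}{r_K},\frac{R_L}{r_L}\right)\max\!\left(\frac{R_K}{r_L},\frac{R_L}{r_K}\right)=:L,
\]
after passing to the limit in $\sin\alpha$ and using $\lVert v_1\rVert/\lVert v_2\rVert\leq R_L/r_K$ and $\lVert v_2\rVert/\lVert v_1\rVert\leq R_K/r_L$ from the norm bounds above. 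Crucially, $L$ is independent of $u\in\omega$ and of the choice of $v_1,v_2$ in the respective faces.

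Finally, for the Hausdorff estimate, given any $p\in\balpha_{K\hat{+}_{t_1}L}(\omega)$ I realize $p=P((1-t_1)v_1+t_1v_2)$ by identity (1) for some $u\in\omega$ and $v_1,v_2$ as above, and set $q:=P((1-t_2)v_1+t_2v_2)$, which by identity (1) lies in $\balpha_{K\hat{+}_{t_2}L}(u)\subseteq\balpha_{K\hat{+}_{t_2}L}(\omega)$. The Lipschitz bound gives $d(p,q)\leq L|t_1-t_2|$; taking the supremum over $p$ and then running the symmetric argument with $t_1,t_2$ swapped delivers \eqref{3.160}. The only nontrivial step is the inradius/circumradius bookkeeping in the first part; once those estimates are in hand, the proposition is an immediate consequence of identity (1) and Lemma \ref{lip harm}, with no serious obstacle.
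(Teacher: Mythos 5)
Your proposal is correct and follows essentially the same route as the paper: reduce pointwise to Lemma \ref{lip harm} via identity (1) of Proposition \ref{ess properties}, control angles and norms of $v_1\in F(K^*,u)$, $v_2\in F(L^*,u)$ using the inradius/circumradius inclusions, and pass to a supremum over $u\in\omega$ for the Hausdorff bound. The only differences are cosmetic: you derive the hemisphere containment directly from $\frac{1}{R_K}B\subseteq K^*\subseteq\frac{1}{r_K}B$ instead of citing Lemma 3.5 of \cite{Semenov GIP}, and you handle the constant via an infimum over admissible $\sin\alpha$, which is in fact slightly more careful than the paper's final inequality (whose direction, as written with strict $b<\min\{r_K/R_K,r_L/R_L\}$, is backwards and implicitly relies on exactly the limiting step you make explicit).
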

\begin{proof}
Suppose $0\leq t_1\leq t_2 \leq1$. Let $b$ be constants satisfying: \begin{equation}
	0<b<\min \{\frac {r_K}{R_K},\frac {r_L}{R_L}\}
	\end{equation}By properties of Hausdorff distance:
\begin{equation}\label{3.12}
  \begin{split}
  	&d_H(\balpha_{K\hat{+}_{t_1} L}(\omega),\balpha_{K\hat{+}_{t_2}L}(\omega))\leq \\&d_H(\bigcup_{u\in\omega}\balpha_{K\hat{+}_{t_1} L}(u),\bigcup_{u\in\omega}\balpha_{K\hat{+}_{t_2} L}(u))\leq \\& \sup_{u\in\omega}d_H(\balpha_{K\hat{+}_{t_1} L}(u),\balpha_{K\hat{+}_{t_2} L}(u))
  \end{split}
\end{equation}
Fix $u\in \sn$ we are going to show that $\balpha_{K\hat{+}_{t} L}(u)$ is Lipschitz with same constant as in equation \eqref{3.160}. Then the above equation \eqref{3.12} will imply the claim. From Proposition \ref{ess properties} we have:
\begin{equation}\label{3.14}
\begin{split}
  \balpha_{K\hat{+}_{t} L}(u)=\bigcup_{\substack{v_1\in F(K^*,u) \\ v_2\in F(L^*,u)}}P\big((1-t)v_1+tv_2\big)
  \end{split}
\end{equation}
Using Lemma 3.5 in \cite{Semenov GIP} for $u\in\omega$: \begin{equation}
  \balpha_K(u)\subset u_{\arccos b}=u_{\frac \pi 2 - (\frac \pi 2 - \arccos b)}
  \end{equation}
Therefore, for all $v_1\in F(K^*,u)$ we have that $v_1\in u_{\frac \pi 2 - (\frac \pi 2 - \arccos b)}$. Similarly, $v_2\in u_{\frac \pi 2 - (\frac \pi 2 - \arccos b)}$ for all $v_2\in F(L^*,u)$. Therefore we can invoke Proposition \ref{lip harm}, and combining it with \eqref{3.14} we obtain that the following holds for equation \eqref{3.12}: 
\begin{equation}
  \begin{split}
  &\sup_{\phantom{......}u\in\omega\phantom{.......}}d_H(\balpha_{K\hat{+}_{t_1} L}(u),\balpha_{K\hat{+}_{t_2} L}(u))\leq\\&\sup_{\substack{u\in\omega \\ v_1\in F(K^*,u) \\ v_2\in F(L^*,u)}}d_H(P\big((1-t_1)v_1+t_1v_2\big),P\big((1-t_2)v_1+t_2v_2\big))\leq\\
   & \sup_{\substack{u\in\omega \\ v_1\in F(K^*,u) \\ v_2\in F(L^*,u)}} \frac 2 {\sin (\frac \pi 2 -\arccos b)} \max(\frac {\lVert v_1\rVert }{\lVert v_2\rVert },\frac {\lVert v_2\rVert }{\lVert v_1\rVert })|t_2-t_1| \leq  \\ & \phantom{.....}\frac 2 b \max(\frac {R_K} {r_L}, \frac {R_L} {r_K}) |t_2-t_1| \leq  \\& \phantom{.....}
2 \max (\frac {R_K} {r_K}, \frac {R_L} {r_L})\max (\frac {R_K} {r_L}, \frac {R_L} {r_K})|t_2-t_1|
  \end{split}
\end{equation}

\end{proof}


\section{Multivalued Maps and Related Measure Theory}\label{Section Measure}
 In this section, we are going to elaborate on measure-theoretic statements with regard to the Theorem \ref{main}. First, let us address the almost everywhere equality condition for the multi-valued maps. If we are given two measurable functions $f,g : \mathbb{R} \rightarrow \mathbb{R}$, then the usual definition of $f$ being equal to $g$ almost everywhere with respect to, say, Lebesgue measure $\lambda$ is given by 
\begin{equation}\label{standard a.e.}
	\lambda(\{x\mid f(x)\neq g(x)\})=0
\end{equation}
We claim that this is the same as a saying 
\begin{equation}\label{new a.e}
	\forall \omega \text{ Borel }\lambda(f^{-1}(\omega)\triangle g^{-1}(\omega))=0, 
\end{equation}
where $\triangle$ denotes the symmetric difference between two sets. Clearly \eqref{standard a.e.} implies \eqref{new a.e} as 
\begin{equation}
	f^{-1}(\omega)\triangle g^{-1}(\omega)\subset \{x\mid f(x)\neq g(x)\}
\end{equation}
For the reverse direction given $\eps>0$ write $\mathbb R$ as the union of intervals $I_j$ with length $\eps$. Then
\begin{equation}\begin{split}
	\lambda(\{x\mid |f(x)-g(x)|>\eps \})= \\
	\lambda\Big (\bigcup_j \big (\{x\mid |f(x)-g(x)|>\eps \}\cap f^{-1}(I_j)\big )\Big )=\\ \lambda\big (\bigcup_j \big \{x\mid |f(x)-g(x)|>\eps, f(x)\in I_j \} \big )\leq \\
	  \lambda\Big  (\bigcup_j \big \{x\mid f(x)\in I_j, g(x)\notin I_j \}  \Big  ) \leq \\
	\sum_i \lambda(f^{-1}(I_j)\triangle g^{-1}(I_j)) = 0
\end{split}
\end{equation}
Since this is true for arbitrary $\eps$ we obtain \eqref{standard a.e.}.

However, when $f$ and $g$ are multivalued functions, such as, for example, radial Gauss Image maps, equation \eqref{standard a.e.} turns out to be much stronger than \eqref{new a.e}. To see this, consider two functions \begin{equation}\begin{split}
	f(x)&= (x-1,x+1) \\
	g(x)&=[x-1,x+1]
\end{split}
\end{equation}
Then, $\{x\mid f(x)\neq g(x)\} = \mathbb R$, so \eqref{standard a.e.} clearly doesn't hold. Let's show that these two functions satisfy \eqref{new a.e}.  Given a set $\omega$:
\begin{equation}\label{functions not equiv}
	f^{-1}(\omega)\triangle g^{-1}(\omega)=\Big(\bigcup_{x\in \omega}[x-1,x+1] \Big )\setminus \Big(\bigcup_{x\in \omega}(x-1,x+1) \Big )
\end{equation}
Define a set
\begin{equation}
	I=\bigcup_{x\in \omega}[x-1,x+1] 
\end{equation}
Let $n\in \mathbb Z$, then either $I\cap [n,n+1]=[n,n+1]$ or $I\cap [n,n+1]=I_1\cup I_2$ where $I_1$ is an interval which is either empty or it contains $n$ and $I_2$ is an interval which is either empty or it contains $n+1$. Both of these intervals might be independently half-open or closed. In any of these cases, it is easy to see that one can choose at most countable subset $\omega_n$ from $\omega$ so that

\begin{equation}
	I\cap [n,n+1]\subset \bigcup_{x\in \omega_n}[x-1,x+1].
\end{equation}
Let $\omega_c=\bigcup_{n\in \mathbb Z} \omega_n$. We obtain \begin{equation}
	I=\bigcup_{x\in \omega_c}[x-1,x+1].
\end{equation}
Therefore, returning to \eqref{functions not equiv} we obtain:
\begin{equation}\begin{split}
	f^{-1}(\omega)\triangle g^{-1}(\omega)= \\ \Big(\bigcup_{x\in \omega_c}[x-1,x+1] \Big )\setminus \Big(\bigcup_{x\in \omega}(x-1,x+1) \Big )\subset \\
	\bigcup_{x\in \omega_c}\{x-1,x+1\} 
\end{split}
\end{equation}
which is at the most countable set. Therefore, for any set $\omega$,
\begin{equation}
	\lambda(f^{-1}(\omega)\triangle g^{-1}(\omega))=0,
\end{equation}
thus satisfying \eqref{new a.e} for any Borel set $\omega$.

Guided by this example we are going to introduce the Definition \ref{Almost everywhere equal definition}, which seems to be a proper generalization for the concept of equivalence of almost everywhere when instead of single-valued functions we consider multivalued functions. First, we start with some basic definitions.

\begin{defi}\label{submeasure defenition}
	A spherical submeasure $\mu:\mathcal B \rightarrow [0,+\infty)$ defined on a $\sigma$-algebra $\mathcal B$ of subsets of $\sn$ is a function which satisfies the following:
	\begin{itemize}
		\item $\mu(\emps)=0$
		\item If $A,B\in \mathcal B$ and $A \subset B$, then $\mu(A)\leq \mu(B)$
		\item If $A_1,A_2,\ldots\in\mathcal B$ then $\mu(\bigcup_{i}A_i)\leq \sum_{i}\mu(A_i)$.
	\end{itemize}
\end{defi}

We say that a multi-valued map $f:\sn\rightarrow\sn$ is measurable with respect to spherical submeasure $\lambda$ defined on $\mathcal B$ if for any Borel $\omega\subset \sn$ we have that set $f^{-1}(\omega):=\{x\mid f(x)\cap \omega \neq \emps\}\in \mathcal B$. With this setting given $K\in\kno$, the map $\balpha_{K^*}$ is measurable with respect to any $\lambda$ defined on the Lebesgue measurable sets, as $\balpha_{K^*}^{-1}(\cdot)=\balpha_{K}(\cdot)$ maps Borel measurable sets to Lebesgue measurable sets.

\begin{defi}\label{Almost everywhere equal defenition}
	Given spherical submeasure $\lambda$ defined on $\sigma$-algebra $\mathcal B$, multivalued measurable maps $f$ and $g$ from $\sn$ into $\sn$ are equal almost everywhere if for any $\omega$ Borel,
	\begin{equation}\label{to prove}
		\lambda (f^{-1}(\omega)\triangle g^{-1}(\omega))=0,
	\end{equation}
\end{defi}

The equation \eqref{to prove} is precisely the statement we are going to prove for Theorem \ref{main}:

 \begin{thm}
 Let $K,L\in \kno$. Suppose $\lambda(K,\cdot)=\lambda(L,\cdot)$ are finite Borel measures for a spherical Lebesgue submeasure $\lambda$. Then for any Borel set $\omega\subset S^{n-1}$,  \begin{equation}\label{to prove 2}
 \lambda(\balpha_K(\omega)\triangle\balpha_L(\omega))=0.
 \end{equation}
 \end{thm}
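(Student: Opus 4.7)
The plan is to fix a Borel $\omega \subset S^{n-1}$, observe that the hypothesis gives $\lambda(\balpha_K(\omega)) = \lambda(\balpha_L(\omega))$, and leverage the harmonic mean variation $K\hat{+}_tL$ of Definition \ref{Defenition of Harmonic Mean Variation} together with the Lipschitz control of Proposition \ref{continuity of sum} and the structural identities of Proposition \ref{ess properties} to upgrade this scalar equality into a set-theoretic equality modulo $\lambda$-null sets.

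The first step is to fix $\omega$ and consider the family $\omega_t := \balpha_{K\hat{+}_tL}(\omega)$ for $t \in [0,1]$, with endpoints $\omega_0 = \balpha_K(\omega)$ and $\omega_1 = \balpha_L(\omega)$. By Proposition \ref{ess properties}(4) the union $\bigcup_{t \in (0,1)} \omega_t$ is exactly the union of open geodesic arcs from points of $\balpha_K(u)$ to points of $\balpha_L(u)$ as $u$ ranges over $\omega$, so any genuine discrepancy between $\balpha_K$ and $\balpha_L$ on $\omega$ forces this "ruled surface" to genuinely thicken $\omega_0 \cup \omega_1$. Meanwhile Proposition \ref{continuity of sum} ensures $t \mapsto \omega_t$ is Lipschitz in Hausdorff distance with a constant depending only on the inradii and circumradii of $K$ and $L$.

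For a warm-up in the $C^1$ strictly convex case, the maps $\balpha_K$ and $\balpha_L$ are continuous and single-valued, and $\balpha_{K\hat{+}_tL}$ gives a genuine continuous homotopy between them. Here I would apply the Hopf theorem: the homotopy $\balpha_{K\hat{+}_tL}$ lets us analyze preimages through a degree/area argument, and combining this with the measure-equality hypothesis applied to cleverly sliced Borel sets forces $\balpha_K = \balpha_L$ $\lambda$-a.e. This route corresponds to Lemma \ref{same for strictly} in the paper.

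For general $K,L \in \kno$, multivaluedness of $\balpha_K$ blocks a direct homotopy argument, so I would use Proposition \ref{continuity of sum} as a quantitative substitute: partition $[0,1]$ into fine subintervals and on each use the Hausdorff Lipschitz estimate, together with Proposition \ref{boundary subset} controlling $\partial \balpha_K(\omega)$, to bound the $\lambda$-mass that can appear or disappear from $\omega_t$ over that subinterval. The hypothesis applied to both $\omega$ and its complement, combined with $\lambda(\omega_0)=\lambda(\omega_1)$, should then pin the net symmetric-difference mass at zero. The main obstacle is precisely this passage from a Hausdorff-distance estimate to a $\lambda$-measure estimate: because $\lambda$ is merely a submeasure (not a measure) and $\balpha_K$ is multivalued, Hausdorff continuity of $\omega_t$ does not automatically yield continuity of $\lambda(\omega_t)$. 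Bridging that gap is where the framework of Section \ref{Section Measure} for almost-everywhere equivalence of multivalued maps, and the careful handling of boundary pathologies illustrated at the start of Section \ref{Section Prop}, must do the heavy lifting. The reverse implication (equality of images $\lambda$-a.e. implies equality of Gauss Image measures) is immediate from monotonicity and subadditivity of $\lambda$ applied to $\balpha_K(\omega) \subset \balpha_L(\omega) \cup (\balpha_K(\omega) \triangle \balpha_L(\omega))$.
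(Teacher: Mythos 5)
Your proposal correctly identifies the key technical tools (the harmonic mean variation $K\hat{+}_tL$, Proposition \ref{ess properties}, the Hausdorff--Lipschitz estimate of Proposition \ref{continuity of sum}, the Hopf theorem route for the $C^1$ strictly convex case via Lemma \ref{same for strictly}), and you also correctly flag the central difficulty: Hausdorff continuity of $t\mapsto\balpha_{K\hat{+}_tL}(\omega)$ does not translate into $\lambda$-measure continuity. But the proposal stalls exactly at that point. The ``partition $[0,1]$ into fine subintervals and bound the mass that appears or disappears on each'' idea is not what the paper does and cannot work as stated, for precisely the reason you name; and the suggested closing move --- ``the hypothesis applied to $\omega$ and its complement, together with $\lambda(\omega_0)=\lambda(\omega_1)$, pins the symmetric-difference mass at zero'' --- is circular. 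Knowing $\lambda(\balpha_K(\omega))=\lambda(\balpha_L(\omega))$ for every Borel $\omega$ is the hypothesis, and the paper itself warns in the introduction (after \eqref{another interpretation}) that an equality of this form does \emph{not} in general imply equality of images up to null sets: two different rotations of the sphere give counterexamples for uniform $\lambda$. So some genuinely convexity-specific input must enter beyond abstract measure bookkeeping.

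The two missing ideas that actually close the gap in the paper are the following. First, Lemmas \ref{open}--\ref{gamma} replace an arbitrary $\omega$ by a small compact spherically convex body $\gamma$ with $\lambda(\balpha_K(\partial\gamma))=\lambda(\balpha_L(\partial\gamma))=0$ and $\balpha_K(\gamma)\cup\balpha_L(\gamma)$ inside an open hemisphere, while preserving positivity of the symmetric-difference measure; the grid of Lemma \ref{grid} and regularity of $\lambda(K,\cdot)$ are essential for this, and nothing in your proposal replaces it. Second, and more importantly, Lemma \ref{eta_o} is the engine that converts the geometric fact
\begin{equation}
\balpha_K(\gamma)\triangle\balpha_L(\gamma)\setminus\bigl(\balpha_K(\partial\gamma)\cup\balpha_L(\partial\gamma)\bigr)\subset\bigcup_{0<t<1}\balpha_{K\hat{+}_tL}(\partial\gamma)
\end{equation}
(Lemma \ref{sub}, which is where Proposition \ref{continuity of sum} is really used, via a connectedness/jump-discontinuity argument rather than a time-partition) into a contradiction with $\lambda(K,\cdot)=\lambda(L,\cdot)$. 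The mechanism is concrete: for each $u\in\partial\gamma$ one rescales so $\rho_K(u)=\rho_L(u)$, partitions $\sn$ into $\omega'=\{\rho_K>\rho_L\}$, $\omega=\{\rho_K<\rho_L\}$, $\omega_0=\{\rho_K=\rho_L\}$, shows the sweep set $\eta(u)$ is contained in the open set $\eta_o(u)=(\sn\setminus\balpha_K(\omega\cup\omega_0))\cap(\sn\setminus\balpha_L(\omega'\cup\omega_0))$, and verifies $\balpha_L(\omega')\subset\balpha_K(\omega')\setminus\eta_o(u)\subsetneq\balpha_K(\omega')$, so that $\lambda(\eta_o(u))>0$ forces $\lambda(K,\omega')\neq\lambda(L,\omega')$. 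A Lindel\"of covering argument and subadditivity of $\lambda$ then finish. Without Lemma \ref{eta_o} --- the step that designs a specific test set $\omega'$ adapted to the geometry at $u$ rather than applying the hypothesis to the original $\omega$ --- there is no way to make the hypothesis bite, and this is the genuine gap in your proposal.
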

 
 It turns out that one can other versions of \eqref{to prove 2} when combined with the continuity of the radial Gauss Image maps, Proposition \ref{continuity}. For this, see Lemma \ref{nonempty} and Proposition \ref{common continuity}. 
 
To conclude the measure-theoretic discussion let us now address the measure theory assumptions in Theorem \ref{main}. Given a spherical submeasure $\lambda$ defined on Lebesgue measurable sets and the body $K\in\kno$, the radial Gauss Image measure of $\lambda$ via $K$, $\lambda(K,\cdot)$ is a spherical Borel submeasure, see \cite{GIP}. If $\lambda$ is assumed to be measure instead of submeasure, then $\lambda(K,\cdot)$ becomes a Borel measure as well. This follows from countable additivity of $\lambda$ and the following equality for any countable collection $\omega_i$ of Borel sets :

\begin{equation}
	\bigcup_i\balpha_K(\omega_i)=\balpha_K(\bigcup_i\omega_i).
\end{equation}

As one will soon see in the proof of Theorem \ref{main}, there are several places where we use different properties of $\lambda$ and $\lambda(K,\cdot)$. For example, the regularity of measure $\lambda(K,\cdot)$ is used in Lemma \ref{open} and the continuity of measure $\lambda(K,\cdot)$ is used in Lemma \ref{gamma}.  One can also ask the question whether Theorem \ref{main} holds for submeasure $\lambda(K,\cdot)$. We use the assumption that $\lambda(K,\cdot)$ is a measure when we invoke Lemma 6.3 from \cite{GIP}. Finiteness of measure $\lambda(K,\cdot)$ is never used, other than that finite Borel measures on $\sn$ are automatically regular. 

In general, we didn't try to make the assumption in Theorem \ref{main} as weak as possible and we kept them the way they are stated right now, as all of the convex geometric measures or submeasures (Aleksandrov's integral curvature \cite{Aleks}, Surface area measure of Aleksandrov-Fenchel-Jessen \cite{Aleks0}, Dual curvature measures \cite{HLYZ16}) associated with the Gauss Image Problem, work with Theorem \ref{main} assumptions.


\section{Proof of the Main Theorem}\label{Section Proof}

Any spherical Lebesgue submeasure $\mu$ defines symmetric difference pseudometric  $d_\mu$ on a collection of Borel sets on a sphere, where the distance from set $A$ to $B$ is $\mu(A\triangle B)$. In this notation Theorem \ref{main} now becomes a statement about maps  $\balpha_K$, $\balpha_L$ being equal as maps to symmetric difference pseudometric spaces. We obtain the following:

\begin{lem}\label{open}
	Let $K,L\in \kno$. Suppose $\lambda(K,\cdot)=\lambda(L,\cdot)$ are finite Borel measures for a spherical Lebesgue submeasure $\lambda$. If for some $\omega$ Borel $\lambda(\balpha_K(\omega)\triangle\balpha_L(\omega))>0$, then there exist an open set $\omega'$ such that $\lambda(\balpha_K(\omega')\triangle\balpha_L(\omega'))>0$ 
\end{lem}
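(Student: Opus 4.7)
The plan is to argue the contrapositive: assuming $\lambda(\balpha_K(U)\triangle\balpha_L(U))=0$ for every open $U\subset\sn$, I will deduce $\lambda(\balpha_K(\omega)\triangle\balpha_L(\omega))=0$ for every Borel $\omega$. The engine is outer regularity: because $\lambda(K,\cdot)=\lambda(L,\cdot)$ is a finite Borel measure on the compact metric space $\sn$, it is automatically regular, so for any $\eps>0$ I can find an open $U\supseteq\omega$ with $\lambda(K,U\setminus\omega)<\eps$; unfolding definitions this reads $\lambda(\balpha_K(U\setminus\omega))<\eps$, and the same bound holds with $L$ in place of $K$ since by hypothesis the two Gauss image measures coincide.

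Next I would exploit that $\balpha_K$ and $\balpha_L$ respect unions, so $\balpha_K(U)=\balpha_K(\omega)\cup\balpha_K(U\setminus\omega)$ and analogously for $L$. Writing $A,B,C,D$ for the sets $\balpha_K(\omega),\balpha_L(\omega),\balpha_K(U\setminus\omega),\balpha_L(U\setminus\omega)$, a one-line check (if $x\in A\triangle B$ and $x\notin C\cup D$ then certainly $x\in(A\cup C)\triangle(B\cup D)$) gives the set-theoretic inclusion
\[
A\triangle B\;\subseteq\;\bigl[(A\cup C)\triangle(B\cup D)\bigr]\cup C\cup D.
\]
Applying $\lambda$ and using submeasure monotonicity and subadditivity, together with $\lambda\bigl((A\cup C)\triangle(B\cup D)\bigr)=\lambda(\balpha_K(U)\triangle\balpha_L(U))=0$ from the standing assumption, I get $\lambda(\balpha_K(\omega)\triangle\balpha_L(\omega))\le 2\eps$. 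Since $\eps>0$ was arbitrary, the left-hand side vanishes, contradicting the hypothesis and finishing the contrapositive.

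No serious obstacle is expected here. The only delicate point is that for multivalued $\balpha_K$ the set $\balpha_K(U\setminus\omega)$ can be strictly larger than $\balpha_K(U)\setminus\balpha_K(\omega)$ (common facets may put the same normal in both $\balpha_K(\omega)$ and $\balpha_K(U\setminus\omega)$), which is exactly why the argument is phrased through the union-respecting inclusion above rather than through a subtraction of images. Regularity and the equality $\lambda(K,\cdot)=\lambda(L,\cdot)$ do the rest.
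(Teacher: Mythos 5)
Your argument is correct and is essentially the same as the paper's: both use outer regularity of the finite Borel measure $\lambda(K,\cdot)=\lambda(L,\cdot)$ to approximate the Borel set $\omega$ from outside by an open set, and then a subadditivity/triangle-inequality estimate on the symmetric-difference pseudometric to transfer positivity. The only (minor) variations are that you argue the contrapositive and use a single open neighborhood $U$, exploiting $\lambda(K,\cdot)=\lambda(L,\cdot)$ to bound both $\lambda(\balpha_K(U\setminus\omega))$ and $\lambda(\balpha_L(U\setminus\omega))$ at once, whereas the paper regularizes separately for $K$ and $L$ and intersects the resulting open sets.
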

\begin{proof}
Suppose for some $\omega$ Borel $d_\lambda(\balpha_K(\omega),\balpha_L(\omega))=\lambda(\balpha_K(\omega)\triangle\balpha_L(\omega))=\eps>0$. Note that any finite Borel measure on sphere is automatically a regular Borel measure, because, for example, a sphere is locally compact, separable metric space. Since $\lambda(K,\cdot)$ is a regular Borel measure there exist $\omega'_K$ open set such that $\omega\subset\omega'_K$ and $\lambda(K,\omega'_K\setminus\omega)$ is less than $\eps/4$. Similarly, define $\omega'_L$. Let $\omega'=\omega'_K\cap\omega'_L$. Therefore, \begin{equation}\label{triangle ineq}\begin{split}
	d_\lambda(\balpha_K(\omega'),\balpha_K(\omega))= \\ \lambda(\balpha_K(\omega')\triangle\balpha_K(\omega))=\\ \lambda(\balpha_K(\omega')\setminus\balpha_K(\omega))\leq \\ \lambda(\balpha_K(\omega'\setminus\omega))\leq \\ \lambda(\balpha_K(\omega'_K\setminus\omega))< \\ \eps/4.
	\end{split}
\end{equation} Similarly, $d_\lambda(\balpha_L(\omega'),\balpha_L(\omega))<\eps/4$. Then, by triangle inequality, we get $d_\lambda(\balpha_K(\omega'),\balpha_L(\omega'))>\eps/4$, where $\omega'$ is some open Borel set.	
\end{proof}


Lemma 6.3 in \cite{GIP} essentially states that given any measure $\mu$ defined on a sphere almost every, in Haar sense, spherical rectangle has $\mu$ zero measure boundary. So in particular there should exist semiring of rectangles with $\mu$ zero measure boundary which defines Borel sets as ring closure. We are not going to prove this or make this more precise but instead stick with the statement of Lemma 6.3 in \cite{GIP}. Noticing that the proof of the Lemma 6.3 in \cite{GIP} goes through when considering two measures at the same time we obtain the following statement. 

\begin{lem}\label{grid}
Let $K,L\in \kno$. Suppose $\lambda(K,\cdot)=\lambda(L,\cdot)$ are finite Borel measures. For any $\eps>0$ there exist a constant $m(\lambda(K,\cdot),\lambda(L,\cdot),\eps)$ and a collection of compact spherically convex bodies $\Omega_\eps:=\{\omega^\eps_1,...,\omega^\eps_m\}$ satisfying:
\begin{enumerate}
	\item their union is $S^{n-1}$
	\item diameter of each set is less than $\eps$, that is for any $x,y\in \omega_j^\eps $ we have $d(x,y)<\eps $.
	\item any of their pairwise intersection have zero $\lambda(K,\cdot)$ and zero $\lambda(L,\cdot)$ measure.
	\item $\lambda(K,\partial\omega^\eps_j)=\lambda(L,\partial\omega^\eps_j)=0$.
\end{enumerate} 
\end{lem}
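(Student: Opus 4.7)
The plan is to extend Lemma 6.3 of \cite{GIP}, the single-measure analog, by running its argument for two measures in parallel. That lemma is established by constructing a continuous family of tilings of $\sn$ by spherical rectangles of diameter less than $\eps$, parametrized (for instance) by rotations of a canonical grid, and then showing that only a ``small'' set of parameter values can yield a tiling whose boundary carries positive $\mu$-measure. The small set is at most countable because each tiling boundary is a finite union of slices of the form $\{x \in \sn : x \cdot v = c\}$, and for fixed $v$ and finite $\mu$, at most countably many levels $c$ can satisfy $\mu(\{x \in \sn : x \cdot v = c\}) > 0$ by finiteness of $\mu$.

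Given this single-measure machinery, I would apply the same parametric construction in our setting and let $B_K$ denote the collection of parameters producing a tiling whose boundary has positive $\lambda(K,\cdot)$-measure, and $B_L$ the analogous collection for $\lambda(L,\cdot)$. Each is at most countable by the \cite{GIP} argument applied individually, so $B_K \cup B_L$ is at most countable as well. I would then pick any parameter outside $B_K \cup B_L$ and declare the corresponding tiling to be $\Omega_\eps = \{\omega_1^\eps, \ldots, \omega_m^\eps\}$.

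The four required properties now follow without further work: the spherical rectangles in the canonical tiling are compact spherically convex bodies by construction, so properties (1) and (2), namely that the family covers $\sn$ and that each element has diameter less than $\eps$, are immediate; property (3) holds because any pairwise intersection $\omega_i^\eps \cap \omega_j^\eps$ with $i\neq j$ is contained in $\bigcup_j \partial \omega_j^\eps$, which has zero measure under both $\lambda(K,\cdot)$ and $\lambda(L,\cdot)$ by our choice of parameter; and property (4) is exactly the condition that the choice of parameter was designed to guarantee.

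The real content lies in Lemma 6.3 of \cite{GIP} itself, which the author explicitly cites rather than reproving, so the hard part is hidden in the reference. The only new ingredient required for the two-measure extension is the trivial observation that a union of two at-most-countable sets is still at most countable; no new obstacle arises. This matches exactly what the author signals with the remark that ``the proof of the Lemma 6.3 in \cite{GIP} goes through when considering two measures at the same time.''
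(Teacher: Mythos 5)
Your proposal matches the paper's approach exactly: the paper likewise defers the real work to Lemma 6.3 of \cite{GIP} and justifies the two-measure extension by the same one-line observation that the union of two exceptional (countable/measure-zero) parameter sets is still negligible, so a good tiling parameter can be chosen for both $\lambda(K,\cdot)$ and $\lambda(L,\cdot)$ simultaneously. Your reconstruction of the \cite{GIP} mechanism (countably many bad slice levels for a finite measure) and the derivation of properties (1)--(4) from the chosen tiling are all consistent with what the paper intends.
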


Using Lemma \ref{grid}, we obtain the following proposition: 

\begin{lem}\label{gamma}
Let $K,L\in \kno$. Suppose $\lambda(K,\cdot)=\lambda(L,\cdot)$ are finite Borel measures for a spherical Lebesgue submeasure $\lambda$. If for some Borel set $\omega$, $\lambda(\balpha_K(\omega)\triangle\balpha_L(\omega))>0$, then  there exist a compact spherically convex body $\gamma$, such that \begin{enumerate}
	\item Set $\balpha_K(\gamma)\cup\balpha_L(\gamma)$ is contained in open hemisphere.
	\item $\lambda(\balpha_K(\gamma)\triangle\balpha_L(\gamma))>0$
	\item $\lambda(\balpha_K(\partial\gamma))=\lambda(\balpha_L(\partial\gamma))=0$
	\item
	$\lambda \Big (\balpha_K(\gamma)\triangle\balpha_L(\gamma) \setminus \big (\balpha_K(\partial\gamma)\cup\balpha_L(\partial\gamma)\big )\Big )>0$
	\end{enumerate}

\end{lem}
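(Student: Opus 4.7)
The plan is to combine Lemma~\ref{open}, which reduces to an open $\omega$, with the partition from Lemma~\ref{grid}, and then use pigeonhole to isolate one partition piece that retains positive symmetric difference; the open hemisphere condition (1) will be enforced separately by a compactness argument, while conditions (3) and (4) will drop out automatically from properties of the partition and subadditivity.

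First, by Lemma~\ref{open} I may assume $\omega$ is open, and set $\eta := \lambda(\balpha_K(\omega) \triangle \balpha_L(\omega)) > 0$. Before invoking Lemma~\ref{grid} I produce a uniform diameter threshold $\delta_* > 0$ ensuring that any $\gamma \subset \sn$ with diameter at most $\delta_*$ has $\balpha_K(\gamma) \cup \balpha_L(\gamma)$ contained in an open hemisphere. Since $0 \in \interior K$, for each $u \in \sn$ and $v \in \balpha_K(u)$ one has $v \cdot r_K(u) = h_K(v) > 0$, so $v \cdot u > 0$, and likewise for $L$. Hence the compact set $\balpha_K(u) \cup \balpha_L(u)$ sits strictly inside the open hemisphere $\{w : w \cdot u > 0\}$; Proposition~\ref{continuity} then provides $\delta_u > 0$ with $\balpha_K(u_{\delta_u}) \cup \balpha_L(u_{\delta_u}) \subset \{w : w \cdot u > 0\}$, and a finite subcover of $\sn$ by $\{u^k_{\delta_{u^k}/2}\}_{k=1}^N$ together with $\delta_* := \tfrac{1}{2}\min_k \delta_{u^k}$ makes the threshold uniform.

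Next, apply Lemma~\ref{grid} with some $\eps < \delta_*$ and let $F_\eps := \bigcup\{\omega_j^\eps : \omega_j^\eps \subset \omega\}$. Openness of $\omega$ implies that for each $u \in \omega$ and $\eps$ small enough (depending on $u$) any piece $\omega_j^\eps \ni u$ fits inside $u_\eps \subset \omega$, so $\mathds{1}_{F_\eps}(u) \to 1$ pointwise on $\omega$ along any sequence $\eps_n \to 0$; dominated convergence (using finiteness of $\lambda(K,\cdot)$ and $\lambda(L,\cdot)$) gives $\lambda(K, \omega \setminus F_\eps), \lambda(L, \omega \setminus F_\eps) \to 0$. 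The elementary inclusion
\[
  \balpha_K(\omega) \triangle \balpha_L(\omega) \subset \bigl(\balpha_K(F_\eps) \triangle \balpha_L(F_\eps)\bigr) \cup \balpha_K(\omega \setminus F_\eps) \cup \balpha_L(\omega \setminus F_\eps)
\]
together with subadditivity of $\lambda$ forces $\lambda(\balpha_K(F_\eps) \triangle \balpha_L(F_\eps)) > \eta/2$ once $\eps$ is small. Since $\balpha_K(F_\eps) \triangle \balpha_L(F_\eps) \subset \bigcup_{j \in J} \bigl(\balpha_K(\omega_j^\eps) \triangle \balpha_L(\omega_j^\eps)\bigr)$ for $J := \{j : \omega_j^\eps \subset \omega\}$, pigeonhole produces $\gamma := \omega_j^\eps$ with $\lambda(\balpha_K(\gamma) \triangle \balpha_L(\gamma)) > 0$, which is (2).

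It remains to verify (1), (3), (4). Condition (1) holds because $\mathrm{diam}(\gamma) < \eps < \delta_*$; condition (3) is property 4 of Lemma~\ref{grid}. For (4), writing $A := \balpha_K(\gamma) \triangle \balpha_L(\gamma)$ and $B := \balpha_K(\partial \gamma) \cup \balpha_L(\partial \gamma)$, subadditivity of $\lambda$ together with $\lambda(B) = 0$ gives $\lambda(A) \le \lambda(A \setminus B) + \lambda(B) = \lambda(A \setminus B)$, so $\lambda(A \setminus B) \ge \lambda(A) > 0$. The main obstacle is producing $\delta_*$ uniformly: the open hemisphere containing $\balpha_K(u) \cup \balpha_L(u)$ has pole $u$ that varies with the base point, so the uniformity must be extracted by combining Proposition~\ref{continuity} with compactness of $\sn$.
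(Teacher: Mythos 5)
Your proof is correct, and it tracks the paper's at the macro level (reduce to an open $\omega$ via Lemma~\ref{open}, carve $\omega$ with the special partition of Lemma~\ref{grid}, push the positivity through to a single tile by subadditivity, then read off (1)--(4) from the tile's properties), but you make two choices that genuinely differ from the paper's proof and are arguably cleaner. First, the paper handles the convergence $\lambda(K,\omega\setminus\cdot)\to 0$ by building a nested increasing family $\gamma_n=\bigcup_{k=1}^n\omega_{1/k}$ that mixes several partition scales, showing $\theta_n=\omega\setminus\gamma_n$ decreases to $\varnothing$, and invoking continuity of the finite measure $\lambda(K,\cdot)$ from above; you instead work at a \emph{single} scale $\eps$, observe that $\mathds 1_{F_\eps}\to 1$ pointwise on the open set $\omega$ as $\eps\to 0$, and apply dominated (bounded) convergence for the finite measures $\lambda(K,\cdot)$ and $\lambda(L,\cdot)$. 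This avoids the multi-scale bookkeeping entirely, and it also dispenses with the paper's somewhat awkward after-the-fact remark that the construction ``goes unchanged if we redefine $\gamma_n$ to start from any $k=v$'' so as to force small diameters. Second, you produce the uniform diameter threshold $\delta_*$ directly from $h_K,h_L>0$, Proposition~\ref{continuity}, and compactness of $\sn$, whereas the paper appeals to Lemma~3.5 of~\cite{Semenov GIP} for the enclosing-hemisphere bound; your argument is self-contained and fixes $\delta_*$ before the partition is chosen, so that condition (1) is automatic rather than arranged retroactively. The inclusion $\balpha_K(\omega)\triangle\balpha_L(\omega)\subset(\balpha_K(F_\eps)\triangle\balpha_L(F_\eps))\cup\balpha_K(\omega\setminus F_\eps)\cup\balpha_L(\omega\setminus F_\eps)$ replaces the paper's triangle-inequality computation in the pseudometric $d_\lambda$ with an equivalent set-theoretic estimate, and the derivations of (3) and (4) match the paper's. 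In short: same strategy and same ingredients, but a cleaner single-scale organization that both simplifies the convergence step and makes condition (1) structural rather than patched.
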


\begin{proof}
	By Lemma \ref{open} we may assume that $\omega$ is an open set. Suppose $d_\lambda(\balpha_K(\omega),\balpha_L(\omega))=\lambda(\balpha_K(\omega)\triangle\balpha_L(\omega))=\eps>0$. Given $\delta>0$, let $\omega_\delta$ be defined as union of all sets from $\Omega_\delta$ which are contained in $\omega$. Clearly, $\omega_\delta$ is closed and $\omega_\delta\subset\omega$. Define,

\begin{equation}\label{omega n construction}
  \gamma_n:=\bigcup_{k=1}^n\omega_{\frac 1 {k}}.
\end{equation}
Note that $\gamma_n$ is a closed set. Construct a sequence of sets $\theta_n=\omega\setminus\gamma_n$, which is a sequence of nested open sets. First, we claim that $\theta_n$ is a a sequence of nested sets such that \begin{equation}
	\bigcap_{n=1}^\infty \theta_n = \emps.
\end{equation} 
Suppose not, that is suppose some $u\in \bigcap_{n=1}^\infty \theta_n$. Since $\omega$ is an open set, there exist an $r>0$ such that $u\in B_r(u)\subset \omega$. But then, for any natural $k>\frac 2 r $ there exists a set in $\Omega_{\frac 1 k}$ covering point $u$ and contained in $B_r(u)$ by first two properties from Lemma \ref{grid}. This, combined with $B_r(u)\subset \omega$ implies that $u\in  \omega_{\frac 1 k} \in \gamma_k$. Therefore, $u\notin \theta_k$ contradicting $u\in \bigcap_{n=1}^\infty \theta_n$.

Since $\theta_n=\omega\setminus\gamma_n$ is a sequence of nested sets converging to zero by continuity of measure \begin{equation}\begin{split}
	\lambda(K,\omega\setminus\gamma_n)&\longrightarrow0\\
	\lambda(L,\omega\setminus\gamma_n)&\longrightarrow0.
\end{split}
\end{equation} 
By argument identical to Equation \eqref{triangle ineq} We obtain that there exists $n$ such that
\begin{equation}
\begin{split}
	d_\lambda(\balpha_K(\omega),\balpha_K(\gamma_n))<\frac \eps 4\\
	d_\lambda(\balpha_L(\omega),\balpha_L(\gamma_n))<\frac \eps 4 
\end{split}
\end{equation} 
Therefore, combining this with $d_\lambda(\balpha_K(\omega),\balpha_L(\omega))=\eps$ and using triangle inequality we obtain: 
\begin{equation}\label{to combine}
	d_\lambda(\balpha_K(\gamma_n),\balpha_L(\gamma_n))>\eps/4.
\end{equation}
 Note that, for any $\mathcal{I}$, an arbitrary non-empty index set, and any collections of sets $A_\alpha, B_\alpha$ with this indexing set we have:\begin{equation}\label{big bracket}
  \big (\bigcup_{\alpha\in\mathcal{I}}A_\alpha \big )\triangle \big (\bigcup_{\alpha\in\mathcal{I}}B_\alpha\big )\subseteq\bigcup_{\alpha\in\mathcal{I}}\left(A_\alpha\triangle B_\alpha\right).
\end{equation}
We also note that for $C=K$ or $C=L$, \begin{equation}\label{obvious one}
	\balpha_C\big ( \bigcup_{\alpha\in\mathcal{I}} A_\alpha\big  )= \bigcup_{\alpha\in\mathcal{I}} \balpha_C(A_\alpha)
\end{equation} Therefore, combining equations \eqref{to combine}, \eqref{big bracket}, \eqref{obvious one} and recalling definition of $\gamma_n$ we obtain that there exist some $\omega^\delta_j$ for which $d_\lambda(\balpha_K(\omega^\delta_j),\balpha_L(\omega^\delta_j))>0$. 

Let $\gamma:=\omega^\delta_j$. Since $d_\lambda(\balpha_K(\omega^\delta_j),\balpha_L(\omega^\delta_j))>0$ we obtain that the required second property for $\gamma$ is satisfied. Since $\gamma=\omega^\delta_j$, the third property follows from Lemma \ref{grid}. The last property follows from the second and the third statement. We are only left to show the first statement. Notice that the proof goes unchanged if we redefine $\gamma_n$ in \ref{omega n construction} to start from any $k=v\in \mathbb N$,
\begin{equation}
	 \gamma_n:=\bigcup_{k=v}^n\omega_{\frac 1 {k}}.
\end{equation}
Thus, we can insure the diameter of $\gamma$ is smaller than any given $\eps$. Combining this with Lemma 3.5 from \cite{Semenov GIP} it is not hard to see that one can find $\eps$, so that $\balpha_K(\gamma)\cup\balpha_L(\gamma)$ is contained in an open hemisphere which completes the proof.
\end{proof}

We now would like to show that for set $\gamma$ in Lemma \ref{gamma} the following equation holds:

\begin{equation}\label{super cool equation}
	\balpha_K(\gamma)\triangle\balpha_L(\gamma) \setminus \big (\balpha_K(\partial\gamma)\cup\balpha_L(\partial\gamma)\big )\subset  \bigcup_{0< t<1}\balpha_{K\hat +_tL}(\partial\gamma)
\end{equation}
This equation is the core element for the proof of Theorem \ref{main}. While this may be complicated notationally, this essentially means that as we change $K$ to $L$ by $K\hat +_tL$, normals cones at $\partial\gamma$ change continuously, and swipe out through symmetric differences of $\balpha_K(\gamma)$ and $\balpha_L(\gamma)$. 

We establish two different proofs for Equation \eqref{super cool equation}. The first proof is for $C^1$ strictly convex bodies which beautifully follows from Hopf's Theorem and doesn't use Proposition \ref{continuity of sum} about Lipschitz continuity of $\balpha_{K\hat+_tL}(\omega)$. Unfortunately, we were not able to naturally extend this proof to $\kno$ because of the complicated structure of the radial Gauss image map. When $K,L$ are $C^1$ strictly convex bodies, $\balpha_{K\hat+_tL}:\sn\rightarrow \sn $ defines a homotopy of homeomorphisms $\balpha_K$ and $\balpha_L$. This fails for more general $K,L$ since maps $\balpha_K$ and $\balpha_L$ become multivalued, and not, necessarily injective, thus, making standard topological machinery inapplicable in the most general case. We find a different proof for the most general case mimicking the Hopf Theorem and using Proposition \ref{continuity of sum}. Yet, we still include the proof for the $C^1$ strictly convex case as it contains the main ideas, intuition and motivation. Let us note that it would be very interesting to see if somebody can obtain \eqref{super cool equation} just from topological ideas without using notion of distance, as it was done here for $C^1$ strictly convex bodies.

We start with a preliminary proposition for $C^1$ strictly convex bodies.

\begin{prop}\label{homemomorphism}
	Suppose $K\in\kno$ is $C^1$ strictly convex body. Then $\balpha_K:\sn\rightarrow \sn$ is a homeomorphism. 
\end{prop}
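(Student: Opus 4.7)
The plan is to factor $\balpha_K$ as a composition $\nu_K \circ r_K$, where $\nu_K \colon \partial K \to \sn$ is the Gauss map sending $x \in \partial K$ to its unique outer unit normal, and to verify separately that each factor is a homeomorphism. The radial map $r_K(u) = \rho_K(u) u$ is a homeomorphism from $\sn$ onto $\partial K$ for any $K \in \kno$: it is continuous, injective, and its inverse $x \mapsto x / \lVert x \rVert$ is also continuous, with the uniform bounds $0 < r_K \leq \rho_K(u) \leq R_K$ preventing degeneracy. So the entire content of the proposition lives in the Gauss map factor.

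For $\nu_K$ I would check four standard properties under the $C^1$ strictly convex hypothesis. The $C^1$ assumption is precisely the condition that at every $x \in \partial K$ the normal cone $N(K,x)$ is a single ray, so $\nu_K$ is single-valued. Continuity of $\nu_K$ then follows from the continuity of the subdifferential of the support function, or equivalently from Proposition \ref{continuity} applied to $\balpha_K$ in the now single-valued setting. For injectivity, suppose $\nu_K(x) = \nu_K(y) = v$ with $x \ne y$; then $x, y \in F(K,v) = H_K(v) \cap K$, so by convexity the whole segment $[x,y]$ lies in $F(K,v) \subset \partial K$, contradicting strict convexity. Surjectivity is automatic: for every $v \in \sn$, the continuous function $y \mapsto y \cdot v$ attains its maximum $h_K(v)$ on the compact body $K$ at some boundary point.

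Thus $\nu_K$ is a continuous bijection from the compact space $\partial K$ onto the Hausdorff space $\sn$, hence a homeomorphism, and $\balpha_K = \nu_K \circ r_K$ is a homeomorphism as a composition of homeomorphisms. The main conceptual point, and what makes this clean, is the duality $C^1 \Leftrightarrow \nu_K$ single-valued and (strict convexity) $\Leftrightarrow \nu_K$ injective; both hypotheses are genuinely needed, and the reason the proposition fails for general $K \in \kno$ is exactly that dropping either condition causes $\balpha_K$ to become properly multivalued or non-injective, which is the motivation for the multivalued framework developed in Sections \ref{Section Prop} and \ref{Section Measure}. There is no serious obstacle here; the statement is essentially a packaging of classical facts and serves as the clean model case that the general argument must emulate.
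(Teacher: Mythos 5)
Your argument is correct, and it takes a genuinely different route from the paper. The paper's proof is a two-line appeal to invariance of domain: it asserts that $\balpha_K$ is a continuous bijection and then invokes Brouwer's invariance of domain to conclude it is a homeomorphism (implicitly, invariance of domain shows the map is open, and an open continuous injection $\sn\to\sn$ is automatically surjective by connectedness, hence a homeomorphism). You instead factor $\balpha_K = \nu_K\circ r_K$, prove surjectivity of $\nu_K$ directly by compactness of $K$, and close with the elementary fact that a continuous bijection from a compact space to a Hausdorff space is a homeomorphism. Your route avoids invariance of domain entirely and replaces it with the closed-map lemma, which is more elementary; it also makes explicit exactly where each hypothesis ($C^1$ for single-valuedness, strict convexity for injectivity) enters, which the paper leaves implicit. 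The paper's route is shorter on the page precisely because it outsources the surjectivity argument to a deep theorem. Both are sound; yours is the more self-contained and pedagogically transparent of the two.

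One small remark: you cite Proposition \ref{continuity} to justify continuity of $\nu_K$, but that proposition gives a sequential upper-semicontinuity statement for the multivalued map; to conclude genuine continuity of the single-valued restriction you need to combine it with single-valuedness (which you have from $C^1$), i.e., upper-semicontinuity of a single-valued compact-valued map is continuity. This is fine but worth saying, since the paper never packages that deduction explicitly.
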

	\begin{proof}
		Since $K$ is $C^1$ strictly convex body, $\balpha_K$ is a continuous  bijective map. Therefore, by invariance of the domain $\balpha_K$ is a homeomorphism. 
	\end{proof}
When $K\in\kno$ is $C^1$ strictly convex body we have an improvement for Proposition \ref{boundary subset}. 

\begin{prop}\label{boundary for C^1}
	Suppose $K\in\kno$ is $C^1$ strictly convex body. Then for any $\omega\subset\sn$, \begin{equation}
		\balpha_K(\gamma)\setminus \balpha_K(\partial \gamma)= \balpha_K(\interior \gamma)
	\end{equation}
\end{prop}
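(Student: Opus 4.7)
The plan is to exploit the fact that $\balpha_K$ is a homeomorphism of $\sn$ under the $C^1$ strictly convex hypothesis (Proposition \ref{homemomorphism}), reducing the statement to a purely set-theoretic identity for injective maps. Since homeomorphisms are in particular bijections, standard point-set manipulations will suffice; no convex-geometric input beyond injectivity is needed.

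First, I would record the trivial topological identity $\omega \setminus \partial\omega = \interior\omega$, which holds for every subset $\omega \subset \sn$: the inclusion $\interior\omega \subset \omega \setminus \partial\omega$ is immediate, and conversely any $u \in \omega$ that is not a boundary point of $\omega$ must be an interior point of $\omega$ (it cannot be an interior point of the complement since $u \in \omega$). Similarly, $\interior\omega \cap \partial\omega = \emptyset$.

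For the inclusion $\balpha_K(\omega) \setminus \balpha_K(\partial\omega) \subset \balpha_K(\interior\omega)$, I would take $n$ in the left-hand side and pick $u \in \omega$ with $n \in \balpha_K(u)$. By injectivity of $\balpha_K$ (from Proposition \ref{homemomorphism}), any preimage of $n$ is unique; since $n \notin \balpha_K(\partial\omega)$, this forces $u \notin \partial\omega$, hence $u \in \omega \setminus \partial\omega = \interior\omega$, so $n \in \balpha_K(\interior\omega)$. For the reverse inclusion, I would use injectivity in the form $\balpha_K(A) \cap \balpha_K(B) = \balpha_K(A \cap B)$ to conclude
\begin{equation}
    \balpha_K(\interior\omega) \cap \balpha_K(\partial\omega) = \balpha_K(\interior\omega \cap \partial\omega) = \balpha_K(\emptyset) = \emptyset,
\end{equation}
and combine this with $\balpha_K(\interior\omega) \subset \balpha_K(\omega)$ to obtain $\balpha_K(\interior\omega) \subset \balpha_K(\omega) \setminus \balpha_K(\partial\omega)$.

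There is no genuine obstacle here: the only place where the $C^1$ strictly convex hypothesis enters is through injectivity of $\balpha_K$, which is exactly what fails in the general case (where $\balpha_K$ becomes multivalued and non-injective) and which is the reason the corresponding statement for arbitrary $K \in \kno$ in Proposition \ref{boundary subset} is merely an inclusion rather than the equality shown here.
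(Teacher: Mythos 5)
Your proof is correct and takes the same approach as the paper, which simply states that the result "immediately follows from Proposition \ref{homemomorphism}"; you have spelled out the set-theoretic details of why injectivity of $\balpha_K$ yields the equality. (The only nitpick: the forward inclusion $\balpha_K(\omega)\setminus\balpha_K(\partial\omega)\subset\balpha_K(\interior\omega)$ already follows from single-valuedness alone, without invoking injectivity; injectivity is genuinely needed only for the reverse inclusion.)
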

\begin{proof}
This immediately follows from Proposition \ref{homemomorphism}.
\end{proof}

We are ready to show Equation \eqref{super cool equation} for $C^1$ strictly convex bodies. 

\begin{prop}\label{same for strictly}
	Let $K,L\in\kno$ be $C^1$ strictly convex bodies. Suppose $\gamma\in\sn$ is a compact spherically convex body such that set $\balpha_K(\gamma)\cup\balpha_K(\gamma)$ is contained in open hemisphere $\Omega$. Then, 
%
	
	\begin{equation}\label{strictly 2}
		  \balpha_K(\gamma)\triangle\balpha_L(\gamma)\subset\bigcup_{0\leq t\leq1}\balpha_{K\hat +_tL}(\partial\gamma).
	\end{equation}
	
	\begin{equation}\label{strictly 3}
	\balpha_K(\gamma)\triangle\balpha_L(\gamma) \setminus \big (\balpha_K(\partial\gamma)\cup\balpha_L(\partial\gamma)\big )\subset  \bigcup_{0< t<1}\balpha_{K\hat +_tL}(\partial\gamma)
\end{equation}
\end{prop}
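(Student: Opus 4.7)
The plan is to exploit the harmonic mean family $\{K \hat{+}_t L\}_{t \in [0,1]}$ as a continuous homotopy between $\balpha_K$ and $\balpha_L$, and then trace the preimage of any point in the symmetric difference along this homotopy. First I would verify that $K \hat{+}_t L$ remains $C^1$ strictly convex for every $t \in [0,1]$: since polarity interchanges $C^1$ smoothness with strict convexity, both $K^*$ and $L^*$ are $C^1$ strictly convex, so the Minkowski combination $(1-t) K^* + t L^*$ is as well, and taking polars preserves this pair of properties. By Proposition \ref{homemomorphism}, every $\balpha_{K \hat{+}_t L} : \sn \to \sn$ is then a homeomorphism. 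Hausdorff continuity of the bodies in $t$ combined with stability of the Gauss map under Hausdorff convergence of $C^1$ strictly convex bodies yields joint continuity of $(t, u) \mapsto \balpha_{K \hat{+}_t L}(u)$; a compactness argument using uniqueness of preimages then upgrades this to joint continuity of $(t, n) \mapsto \balpha_{K \hat{+}_t L}^{-1}(n)$.

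For \eqref{strictly 2}, pick $n \in \balpha_K(\gamma) \triangle \balpha_L(\gamma)$ and assume without loss of generality that $n \in \balpha_K(\gamma) \setminus \balpha_L(\gamma)$. The continuous path $u_t := \balpha_{K \hat{+}_t L}^{-1}(n)$ in $\sn$ satisfies $u_0 \in \gamma$ and $u_1 \notin \gamma$. Setting $t^* := \sup\{t \in [0,1] : u_t \in \gamma\}$, closedness of $\gamma$ yields $u_{t^*} \in \gamma$, while the supremum definition together with continuity produces a sequence $t_k \to t^*$ with $u_{t_k} \notin \gamma$ (taking $t_k > t^*$ if $t^* < 1$, or using $u_1 \notin \gamma$ if $t^* = 0$), whence $u_{t^*} \in \partial \gamma$. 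Therefore $n = \balpha_{K \hat{+}_{t^*} L}(u_{t^*}) \in \balpha_{K \hat{+}_{t^*} L}(\partial \gamma)$, proving \eqref{strictly 2}.

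For \eqref{strictly 3}, additionally suppose $n \notin \balpha_K(\partial \gamma) \cup \balpha_L(\partial \gamma)$. By Proposition \ref{boundary for C^1} this forces $u_0 \in \interior \gamma$ and $u_1 \in \sn \setminus \bar{\gamma}$, both open sets. Continuity of $t \mapsto u_t$ then keeps $u_t \in \interior \gamma$ for $t$ in a neighborhood of $0$ and $u_t \in \sn \setminus \bar{\gamma}$ for $t$ in a neighborhood of $1$, so the crossing time $t^*$ constructed above must lie in the open interval $(0, 1)$, giving \eqref{strictly 3}. Conceptually this is a Hopf-theorem-style argument: a point $n$ avoided by $\balpha_{K \hat{+}_t L}(\partial \gamma)$ for all $t$ has a well-defined local degree that is homotopy-invariant in $t$, and the homeomorphism property collapses that degree calculation to the path-connectedness argument above. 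The main technical obstacle is setting up the joint continuity of the inverse family uniformly across $[0,1]$, but since each $\balpha_{K \hat{+}_t L}$ is a homeomorphism of a compact space and the family of bodies is continuous in Hausdorff distance, this reduces to a standard check.
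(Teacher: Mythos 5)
Your proof is correct, but it takes a genuinely different route from the paper. The paper's argument applies the Hopf theorem: fixing an offending point $x$ in the symmetric difference avoided by all $\balpha_{K\hat+_tL}(\partial\gamma)$, it radially projects the homotopy $\balpha_{K\hat+_{\cdot}L}(\cdot)$ restricted to $\partial\gamma \cong S^{n-2}$ into $\partial\Omega \cong S^{n-2}$ and derives a contradiction because $p\circ\balpha_K$ has nonzero degree (since $x$ is interior to $\balpha_K(\gamma)$) while $p\circ\balpha_L$ has degree zero (it extends over the disk $\gamma$). You instead trace the preimage path $u_t = \balpha_{K\hat+_tL}^{-1}(n)$ and invoke a connectedness/crossing-time argument: $u_0\in\gamma$, $u_1\notin\gamma$, so $u_{t^*}\in\partial\gamma$ for the sup crossing time, whence $n\in\balpha_{K\hat+_{t^*}L}(\partial\gamma)$. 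Your route is more elementary, avoiding degree theory entirely, but it pays for this by needing the stronger input that every intermediate body $K\hat+_tL$ is itself $C^1$ strictly convex so that each $\balpha_{K\hat+_tL}$ is a homeomorphism and the preimage path is well-defined and continuous. That claim is correct (strict convexity and $C^1$ smoothness of $K^*$ and $L^*$ both pass to $(1-t)K^*+tL^*$ via $F(A+B,u)=F(A,u)+F(B,u)$ and $h_{A+B}=h_A+h_B$, and polarity preserves the pair), but it is an extra fact that the paper's degree-theoretic argument never needs: there only $\balpha_K$ and $\balpha_L$ must be homeomorphisms, while the intermediate maps merely need to be continuous. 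One small slip: in the crossing-time paragraph the alternative should read ``$t^*=1$'', not ``$t^*=0$'', and in fact the case $t^*=1$ cannot occur since $u_1\notin\gamma$ and $\gamma$ is closed; and for the joint continuity of $(t,n)\mapsto\balpha_{K\hat+_tL}^{-1}(n)$ the cleanest justification is that $(t,u)\mapsto(t,\balpha_{K\hat+_tL}(u))$ is a continuous bijection of the compact Hausdorff space $[0,1]\times\sn$, hence a homeomorphism (rather than an appeal to Hausdorff stability of the Gauss map, which is a heavier ingredient).
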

\begin{proof}
	By taking \eqref{strictly 2} and subtracting $\big (\balpha_K(\partial\gamma)\cup\balpha_L(\partial\gamma)\big )$ from both sides we obtain \eqref{strictly 3}. We have to show \eqref{strictly 2}. 
	
	Since $\balpha_K$ and $\balpha_L$ are homeomorphisms we obtain from Proposition \ref{ess properties} that \begin{equation}
		\balpha_{K\hat+_tL}(u)=P((1-t)\balpha_K(u)+t\balpha_L(u)).
	\end{equation}
	From Lemma 3.5 in \cite{Semenov GIP} there exist $\beta>0$ such that for any $u\in\sn$, \begin{equation}
		\balpha_K(u),\balpha_L(u)\subset u_{\frac \pi 2 - \beta}. 
	\end{equation} 
	Thus, given any $u\in\sn$ there exist a $\delta>0$ such that $\balpha_K(u_{\delta}),\balpha_K(u_{\delta})\subset u_{\frac \pi 2 - \beta+\delta}$ with $\frac \pi 2 - \beta+\delta<\frac \pi 2$. Let $\Omega$ be a an open hemisphere containing both $\balpha_K(u_{\delta}),\balpha_K(u_{\delta})$. Therefore, for any $t\in[0,1]$ and $v\in u_\delta$ we have that $(1-t)\balpha_K(v)+t\balpha_L(v)$ has a lower bound on a distance to the center which makes $P((1-t)\balpha_K(v)+t\balpha_L(v))$ a continuous map in variables $(t,u)$ for the region $[0,1]\times u_\delta$.
	Therefore $\balpha_{K\hat+_{\cdot}L}(\cdot)$ is continuous in $(t,u)$ on $[0,1]\times \sn$. Thus, $\balpha_{K\hat+_{\cdot}L}(\cdot)$ defines a homotopy of maps $\balpha_K(\cdot),\balpha_L(\cdot):\sn \rightarrow \sn.$ Note that since $\gamma$ is a compact spherically convex body it is homeomorphic to $D^{n-1}$.
	Now, suppose \eqref{strictly 2} doesn't hold. Then, without loss of generality there exist an $x\in \sn$ such that \begin{equation} 
	x \in \balpha_K(\gamma)\setminus(\balpha_L(\gamma)\cup \bigcup_{0\leq t\leq1}\balpha_{K\hat +_tL}(\partial\gamma)) 
	\end{equation}
	Therefore, from Proposition \ref{boundary for C^1}, $x\in \balpha_K(\interior \gamma)$. Let $p$ be a continuous map from $\Omega\setminus \{x\}\rightarrow \partial \Omega$. Then $p\circ \balpha_{K\hat+_{\cdot}L}(\cdot):[0,1]\times \partial \gamma\rightarrow \partial \Omega$ is a homotopy of maps $p\circ \balpha_K$ and $p\circ\balpha_L$ from $\partial \gamma \cong \mathbb S^{n-2}$ into $\partial \Omega \cong \mathbb S^{n-2}$. Since $x\in \interior{\balpha_K(\gamma)}$, the set $p\circ\balpha_K(\partial\gamma)=\partial\Omega$ and, thus, $\deg(p\circ \balpha_K)\neq 0$. Now, $p\circ \balpha_L:\partial\gamma\rightarrow \partial \Omega$ is a continuous map from $\mathbb S^{n-2}$ to $\mathbb S^{n-2}$ which extends to a continuous map from $p\circ \balpha_L: \gamma \cong \dn \rightarrow \partial \Omega \cong \mathbb S^{n-1}$ since $x\notin \balpha_L(\gamma)$, which forces $\deg (p\circ \balpha_L)=0$. This contradicts Hopf Theorem as homotopic maps $p\circ \balpha_K$ and $p\circ \balpha_L$ have different degrees. This completes the proof of \eqref{strictly 2}.

\end{proof}

Now we are going to prove the same statement without $C^1$ strict convexity assumption using Lipschitz properties of  $\balpha_{K\hat{+}_tL}$.
\begin{lem}\label{sub}
Let $K,L\in \kno$. Suppose $\gamma\in\sn$ is a compact spherically convex body, then the following holds:
\begin{equation}
	\balpha_K(\gamma)\triangle\balpha_L(\gamma) \setminus \big (\balpha_K(\partial\gamma)\cup\balpha_L(\partial\gamma)\big )\subset  \bigcup_{0< t<1}\balpha_{K\hat +_tL}(\partial\gamma)
\end{equation}
\end{lem}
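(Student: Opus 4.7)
The plan is to mimic the Hopf-theoretic proof of Proposition \ref{same for strictly}, replacing the homotopy/degree argument by the Hausdorff-Lipschitz continuity of $\balpha_{K\hat{+}_tL}$ furnished by Proposition \ref{continuity of sum}. Fix any $x$ in the left-hand side; by the symmetry of $\triangle$ we may assume
\begin{equation}
	x\in \balpha_K(\gamma)\setminus \balpha_L(\gamma),\qquad x\notin \balpha_K(\partial\gamma)\cup\balpha_L(\partial\gamma).
\end{equation}
Track $x$ along the harmonic mean variation by setting
\begin{equation}
	T:=\{t\in[0,1]\mid x\in \balpha_{K\hat{+}_tL}(\gamma)\}.
\end{equation}
Then $0\in T$ while $1\notin T$, and the goal is to produce some $t_0\in(0,1)$ with $x\in \balpha_{K\hat{+}_{t_0}L}(\partial\gamma)$.

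First I would check that $T$ is closed: if $t_n\to t$ with $t_n\in T$, Proposition \ref{continuity of sum} gives $d_H(\balpha_{K\hat{+}_{t_n}L}(\gamma),\balpha_{K\hat{+}_{t}L}(\gamma))\to 0$, so $d(x,\balpha_{K\hat{+}_{t}L}(\gamma))\to 0$, and since the radial Gauss image of a closed set is closed, $x\in \balpha_{K\hat{+}_{t}L}(\gamma)$. Hence $t_0:=\sup T$ belongs to $T$, and $t_0<1$ because $1\notin T$. The crux is to prove $x\in \balpha_{K\hat{+}_{t_0}L}(\partial\gamma)$. If $x\in\partial \balpha_{K\hat{+}_{t_0}L}(\gamma)$, Proposition \ref{boundary subset} yields this directly. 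Otherwise $x$ is interior to $\balpha_{K\hat{+}_{t_0}L}(\gamma)$, so some spherical ball $B_r(x)$ is contained in $\balpha_{K\hat{+}_{t_0}L}(\gamma)$. Pick $t_n\searrow t_0$ with $t_n>t_0$, so that $x\notin \balpha_{K\hat{+}_{t_n}L}(\gamma)$. Proposition \ref{continuity of sum} supplies $y_n\in \balpha_{K\hat{+}_{t_n}L}(\gamma)$ with $d(x,y_n)\to 0$; I would walk from $x$ toward $y_n$ along a minimizing geodesic and take the first entry point $z_n\in \partial \balpha_{K\hat{+}_{t_n}L}(\gamma)$, which satisfies $z_n\to x$. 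Proposition \ref{boundary subset} then gives $z_n\in \balpha_{K\hat{+}_{t_n}L}(\partial\gamma)$, and a second application of Proposition \ref{continuity of sum}, this time to $\partial\gamma$, yields $d(z_n,\balpha_{K\hat{+}_{t_0}L}(\partial\gamma))\to 0$. Closedness of $\balpha_{K\hat{+}_{t_0}L}(\partial\gamma)$ forces $x\in \balpha_{K\hat{+}_{t_0}L}(\partial\gamma)$. Finally $t_0=0$ is excluded, since otherwise $x\in \balpha_K(\partial\gamma)$, contradicting our standing assumption; so $t_0\in(0,1)$ as required.

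The main obstacle compared to Proposition \ref{same for strictly} is that $\balpha_K$ and $\balpha_L$ are genuinely multivalued and non-injective here, so the Hopf theorem and the homotopy of homeomorphisms are not available; one cannot integrate the transition of $x$ via degree theory. The new idea is that Lipschitz continuity in Hausdorff distance, combined with the general boundary inclusion $\partial \balpha_K(\omega)\subset \balpha_K(\partial\omega)$, acts as a metric substitute: it lets one continuously slide $x$ out of $\balpha_{K\hat{+}_tL}(\gamma)$ and pin down the exact transition value $t_0$ at which $x$ must cross through the boundary image $\balpha_{K\hat{+}_{t_0}L}(\partial\gamma)$. The slightly delicate point is the geodesic-entry argument on $\sn$ in the interior case, but because $r$ can be chosen small this reduces to an elementary spherical geometry consideration.
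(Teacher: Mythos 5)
Your proof is correct, and it uses the same two structural ingredients as the paper's argument --- the Hausdorff--Lipschitz continuity of $t\mapsto\balpha_{K\hat+_tL}(\cdot)$ from Proposition \ref{continuity of sum} and the boundary inclusion $\partial\balpha_{K\hat+_tL}(\omega)\subset\balpha_{K\hat+_tL}(\partial\omega)$ from Proposition \ref{boundary subset} --- but it locates the crossing time in a genuinely different way. The paper's proof first establishes a ``ball'' version: if an $\eps$-ball around $x$ misses $\bigcup_t\balpha_{K\hat+_tL}(\partial\gamma)$, then each $\balpha_{K\hat+_tL}(\gamma)$ either swallows the ball whole or misses it entirely, forcing an $\eps$-jump in Hausdorff distance between the inclusive regime (which contains $t=0$) and the exclusive regime (which contains $t=1$), contradicting Lipschitz continuity; the general case is then handled by examining the minimum of $f(t)=d(x,\balpha_{K\hat+_tL}(\partial\gamma))$ over $[0,1]$ and, when that minimum is positive, shrinking the ball so as to reduce to the first step. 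You instead track the set $T=\{t: x\in\balpha_{K\hat+_tL}(\gamma)\}$ directly, use Hausdorff continuity together with closedness of radial Gauss images to show $T$ is closed, take $t_0=\sup T<1$, and pin $x$ to $\balpha_{K\hat+_{t_0}L}(\partial\gamma)$ via a geodesic-entry argument on $\sn$ applied to the sets $\balpha_{K\hat+_{t_n}L}(\gamma)$ for $t_n\searrow t_0$. Your version is arguably more streamlined --- it dispenses with the separate $\eps$-ball lemma and the two-case minimum analysis in favor of a single crossing-time argument --- while the paper's version makes the ``jump discontinuity'' picture more explicit. Both are sound; the only point worth flagging in yours is that the interior-of-$\balpha_{K\hat+_{t_0}L}(\gamma)$ hypothesis (the ball $B_r(x)$) is not actually used in the geodesic-entry step, so the boundary and interior cases could be merged.
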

\begin{proof}
Notice that by symmetry it is enough to show that
\begin{equation}
 \balpha_K(\gamma)\setminus\big(\balpha_L(\gamma)\cup\balpha_K(\partial\gamma)\big)\subset\bigcup_{0<t<1}\balpha_{K\hat +_tL}(\partial\gamma).
\end{equation}
Suppose the set on the left side of this equation is nonempty. 

First, assume that there exist $x_\eps \subset\balpha_K(\gamma)\setminus\big(\balpha_L(\gamma)\bigcup\balpha_K(\partial\gamma)\big)$ for some $\eps>0$ which doesn't intersect $\bigcup_{0<t<1}\balpha_{K\hat +_tL}(\partial\gamma)$. Any two sets $A$ and $B$ such that $x_\eps \subset A$ and $x_\eps \subset B^{c}$ are at least $\eps $ Hausdorff distance from each other. Note that, $\balpha_{K\hat{+}_0L}(\gamma)=\balpha_K(\gamma)$ contains $x_\eps $ and  $\balpha_{K\hat{+}_1L}(\gamma)=\balpha_L(\gamma)$ doesn't. Note that $\partial\balpha_{K\hat +_tL}(\gamma)\subset\balpha_{K\hat +_tL}(\partial\gamma)$ from Proposition \ref{boundary subset}. Since for all $t$, $\balpha_{K\hat +_tL}(\partial\gamma)$ doesn't intersect a ball, we have that for all $t$, $\balpha_{K\hat +_tL}(\gamma)$ either contains the ball or not. From Proposition \ref{continuity of sum} we know that $\balpha_{K\hat{+}_tL}(\gamma)$ is  continuous but above argument shows that $\balpha_{K\hat{+}_tL}(\gamma)$ has at least one $\eps $-jump discontinuity. Contradiction.

Note that from Proposition \ref{boundary subset}, $\partial\balpha_K(\gamma)\subset \balpha_K(\partial\gamma)$. Therefore,
\begin{equation}
	\balpha_K(\gamma)\setminus\big(\balpha_L(\gamma)\cup\balpha_K(\partial\gamma)\big)=\interior{\balpha_K(\gamma)}\setminus\big(\balpha_L(\gamma)\cup\balpha_K(\partial\gamma)\big)
\end{equation}
Since, $\gamma$ and $\partial\gamma$ are closed sets, from previous equation we obtain that $\balpha_K(\gamma)\setminus\big(\balpha_L(\gamma)\bigcup\balpha_K(\partial\gamma)\big)$ is an open set. Now, let $x\in\balpha_K(\gamma)\setminus\big(\balpha_L(\gamma)\bigcup\balpha_K(\partial\gamma)\big)$. We can define $f(t):=d(x,\balpha_{K\hat +_tL}(\partial\gamma))$ on compact set $[0,1]$. Proposition \ref{continuity of sum} guarantees us that $f$ is continuous. Notice that $f(0)>1$ as $\balpha_K(\gamma)\setminus\big(\balpha_L(\gamma)\bigcup\balpha_K(\partial\gamma)\big)$ is an open set and $\balpha_{K\hat +_0L}(\partial\gamma)=\balpha_K(\partial\gamma)$. Similarly, $f(1)>0$ as $\balpha_{K\hat +_1L}(\partial\gamma)=\balpha_L(\partial\gamma)\subset \balpha_L(\gamma)$. Suppose $c$ is minimum of $f$ on $[0,1]$. If $c>0$ we have a contradiction from the previous argument by setting $r=\frac c 2$. If $c=0$ this means that for some $t'$ we have $d(x,\balpha_{K\hat +_{t'}L}(\partial\gamma))=0$, and thus $x\in\balpha_{K\hat +_{t'}L}(\partial\gamma)$ as $\balpha_{K\hat +_{t'}L}(\partial\gamma)$ is a closed set. 

\end{proof}

The proof of the next Lemma is very similar to a proof of Lemma 3.8 in \cite{GIP} and can be considered its natural extension.  

\begin{lem}\label{eta_o}
	Suppose $K,L\in\kno$ and we are given $u\in\sn$. 
	Define, 
	\begin{equation}\label{eta}
		\eta(u):=\bigcup_{0<t<1}\balpha_{K\hat +_tL}(u)\setminus\big(\balpha_K(u)\cup\balpha_L(u)\big).
	\end{equation}
If $\eta(u)\neq \emps$ then there exist an open set $\eta_o(u)$ containing $\eta(u)$ such that if $\lambda(\eta_o(u))>0$ then $\lambda(K,\cdot)\neq\lambda(L,\cdot)$.
\end{lem}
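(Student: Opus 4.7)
The plan is to construct $\eta_o(u)$ point-by-point from open balls around each $v_0\in\eta(u)$, using the continuity statement of Proposition \ref{continuity} applied to the dual bodies, and then to derive a measure-theoretic contradiction by building a Borel set from the preimage of a positive-measure piece of $\eta_o(u)$.

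Fix any $v_0\in\eta(u)$. By Proposition \ref{ess properties}(i), we can write $v_0=P((1-t_0)v_1+t_0v_2)$ with $v_1\in F(K^*,u)$, $v_2\in F(L^*,u)$ and $t_0\in(0,1)$, and the condition $v_0\notin\balpha_K(u)\cup\balpha_L(u)$ translates to $u\notin\balpha_{K^*}(v_0)\cup\balpha_{L^*}(v_0)$, two closed sets missing $u$. Applying Proposition \ref{continuity} to the dual convex bodies $K^*,L^*\in\kno$ yields a radius $\rho=\rho(v_0)>0$ such that the open ball $V(v_0):=B_\rho(v_0)\subset\sn$ satisfies $d(u,\balpha_{K^*}(v))\geq\rho$ and $d(u,\balpha_{L^*}(v))\geq\rho$ for every $v\in V(v_0)$. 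I then set
\begin{equation}
	\eta_o(u):=\bigcup_{v_0\in\eta(u)}V(v_0),
\end{equation}
which is open and contains $\eta(u)$ by construction.

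To prove the measure property I would argue by contradiction: assume $\lambda(K,\cdot)=\lambda(L,\cdot)$ and $\lambda(\eta_o(u))>0$. By separability of $\sn$ and countable subadditivity of the submeasure $\lambda$, a single covering ball $B=V(v_0)$ already has $\lambda(B)>0$ with uniform separation constant $\rho_0>0$. Setting $\omega:=u_{\rho_0/2}$, the separation forces $\balpha_K(\omega)\cap B=\balpha_L(\omega)\cap B=\emps$ and $\omega\cap\balpha_K^{-1}(B)=\omega\cap\balpha_L^{-1}(B)=\emps$. On the enlarged Borel set $\omega':=\omega\cup\balpha_K^{-1}(B)$, disjointness gives $\lambda(\balpha_K(\omega'))\geq\lambda(\balpha_K(\omega))+\lambda(B)$. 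Combined with $\lambda(\balpha_K(\omega))=\lambda(\balpha_L(\omega))$ and the assumed equality $\lambda(\balpha_K(\omega'))=\lambda(\balpha_L(\omega'))$, this would force $\lambda(\balpha_L(\balpha_K^{-1}(B))\cap B)<\lambda(B)$, so the contradiction reduces to establishing precisely this last inequality.

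The main obstacle is that last claim: $\balpha_L(\balpha_K^{-1}(B))$ must miss a positive-$\lambda$ portion of $B$. Pointwise, the identity $u\in\balpha_{(K\hat+_{t_0}L)^*}(v_0)$ together with $u\notin\balpha_{K^*}(v_0)\cup\balpha_{L^*}(v_0)$ witnesses that the preimage under the harmonic-mean interpolation lies at a genuinely different location than the preimages under $\balpha_K$ or $\balpha_L$ alone, but the multivalued structure of the radial Gauss Image map permits $\balpha_L^{-1}(v)$ and $\balpha_K^{-1}(v)$ to coincide on large sets even when $K\neq L$. To rule this out I would shrink $\rho$ using Proposition \ref{continuity of sum} jointly with Proposition \ref{continuity}: for $v\in B$ close enough to $v_0$, the set $\balpha_K^{-1}(v)$ clusters near $\balpha_{K^*}(v_0)$ and $\balpha_L^{-1}(v)$ near $\balpha_{L^*}(v_0)$, so $\balpha_L(\balpha_K^{-1}(v))$ lies near the image of $\balpha_{K^*}(v_0)$ under $\balpha_L$, a set that the harmonic-mean identity forces to be bounded away from $v_0$ across $B$. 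Making this clustering precise in the multivalued, low-regularity setting constitutes the technical heart of the proof.
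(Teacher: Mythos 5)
Your proposal takes a genuinely different route from the paper's proof, but it contains a gap that you yourself flag: the final inequality $\lambda(\balpha_L(\balpha_K^{-1}(B))\cap B)<\lambda(B)$ is never established, and in the multivalued setting it is exactly the hard part. Shrinking the ball $B$ and appealing to Proposition~\ref{continuity} and Proposition~\ref{continuity of sum} gives you pointwise clustering of the preimages $\balpha_K^{-1}(v)$ and $\balpha_L^{-1}(v)$ near $\balpha_{K^*}(v_0)$ and $\balpha_{L^*}(v_0)$ respectively, but it does not give you a uniform separation of $\balpha_L(\balpha_K^{-1}(B))$ from a positive-$\lambda$ chunk of $B$, because the preimage $\balpha_K^{-1}(B)$ can contain directions whose $L$-normals sweep all of $B$; nothing in the clustering argument controls this globally. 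There is also a secondary issue: in the step ``disjointness gives $\lambda(\balpha_K(\omega'))\geq\lambda(\balpha_K(\omega))+\lambda(B)$'' you are using additivity of $\lambda$ on disjoint sets, but $\lambda$ is only a \emph{submeasure}; you need to reroute through $\lambda(K,\cdot)$, which is a genuine measure by hypothesis, and the rerouting is not immediate.

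The paper avoids the local-ball strategy entirely. After first rescaling $K$ to $K'$ with $r_{K'}(u)=r_L(u)$ (permissible since rescaling changes neither $\balpha_K$, nor the interpolated union in the definition of $\eta(u)$, nor $\lambda(K,\cdot)$), it partitions $\sn$ into the three Borel sets $\omega'=\{\rho_K>\rho_L\}$, $\omega=\{\rho_K<\rho_L\}$, $\omega_o=\{\rho_K=\rho_L\}$, and defines
$\eta_o(u):=\big(\sn\setminus\balpha_K(\omega\cup\omega_o)\big)\cap\big(\sn\setminus\balpha_L(\omega'\cup\omega_o)\big)$,
which is open because the radial Gauss image maps closed sets to closed sets. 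The contradiction then follows from three clean set inclusions, $\balpha_L(\omega')\subset\balpha_K(\omega')$, $\eta_o(u)\subset\balpha_K(\omega')$, and $\balpha_L(\omega')\subset\balpha_K(\omega')\setminus\eta_o(u)$, which together with $\lambda(\eta_o(u))>0$ yield $\lambda(K,\omega')\neq\lambda(L,\omega')$; the multivaluedness never enters the measure argument at all. The harmonic-mean interpolation is used only once, to verify the containment $\eta(u)\subset\eta_o(u)$, via a direct supporting-hyperplane computation. The moral is that the natural choice of $\eta_o(u)$ is a global object determined by the level sets of $\rho_K/\rho_L$, not a union of local balls; the global choice is exactly what makes the measure-theoretic step trivial and dodges the issue you correctly identified as the technical heart of your approach.
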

\begin{proof}
	Let $K'$ be rescaling of $K$ such that $r_{K'}(u)=r_L(u)$. First notice that $\balpha_K=\balpha_{K'}$. Moreover, since for any $v_1,v_2\in\sn$ contained in same open hemisphere and any constant $c>0$: \begin{equation}
		\bigcup_{0<t<1}P((1-t)v_1+tv_2)=\bigcup_{0<t<1}P((1-t)cv_1+tv_2),
	\end{equation}
	using Proposition \ref{ess properties}, we obtain:
	  \begin{equation}
		\bigcup_{0<t<1}\balpha_{K\hat +_tL}(u)=\bigcup_{0<t<1}\balpha_{K'\hat +_tL}(u).
	\end{equation}
	Thus, we can substitute $K$ with $K'$ in \eqref{eta}. Moreover, $\lambda(K,\cdot)=\lambda(K',\cdot)$. Therefore, to prove this proposition we can safely assume that $K=K'$ for simplicity. In other words, we can assume $r_{K}(u)=r_L(u)$.
	
	Define $\omega',\omega, \omega_0, \eta_o(u)$ to be the following sets:
	\begin{equation}\label{defi in GIP}
		\begin{split}
			\omega'&:=\{\rho_{K}>\rho_L\}:=\{v\in\sn\mid\rho_K(v)>\rho_L(v)\} \\
			\omega&:=\{\rho_{K}<\rho_L\}:=\{v\in\sn\mid\rho_K(v)<\rho_L(v)\} \\ 
			\omega_o&:=\{\rho_{K}=\rho_L\}:=\{v\in\sn\mid\rho_K(v)=\rho_L(v)\} \\
			\eta_o(u)&:=(\sn\setminus\balpha_K(\omega\cup\omega_0))\cap(\sn\setminus\balpha_L(\omega'\cup\omega_0))
		\end{split}
	\end{equation}
	Clearly $\omega',\omega, \omega_0$ are Borel sets. Let's show that $\eta_0(u)$ is an open set. Since the radial function is continuous, sets $\omega\cup\omega_0$ and $\omega'\cup\omega_0$ are closed. Since radial Gauss image map maps closed sets to closed sets, $\balpha_K(\omega\cup\omega_0)$ and $\balpha_L(\omega'\cup\omega_0)$ are closed sets. Now, invoking the definition of $\eta_o(u)$ we immediately  get that $\eta_0(u)$ is an open set.
	
	Now, we are going to show three statements which together with $\lambda(\eta_o(u))>0$ guarantee  $\lambda(K,\omega')\neq\lambda(L,\omega')$, which, in turn, implies desired $\lambda(K,\cdot)\neq\lambda(L,\cdot)$. The statements are the following:
	\begin{equation}\label{stat 1}
		\balpha_L(\omega')\subset \balpha_K(\omega')
	\end{equation}
	\begin{equation}\label{stat 2}
		\eta_o(u)\subset \balpha_K(\omega')
	\end{equation}
	\begin{equation}\label{stat 3}
		\balpha_L(\omega')\subset\balpha_K(\omega')\setminus \eta_o(u)
	\end{equation}

Let's start with \eqref{stat 1}. Pick any $v\in \balpha_L(\omega')$. Let $w\in\omega'$ be such that $v\in \balpha_L(w)$. Since $\rho_K(w)>\rho_L(w)$ we obtain: \begin{equation}
		h_K(v)\geq vr_K(w)>vr_L(w)=h_L(v).
	\end{equation} 
	Thus, $H_K(v)$ is further away from center than $H_L(v)$. Let $x\in\balpha_K^*(v)$. Since $r_K(x)\in H_K(v)$, we obtain that $x\in\omega'$, and, thus, $v\in\balpha_K(\omega')$, which shows \eqref{stat 1}. Equation \eqref{stat 2} immediately follows from definitions \eqref{defi in GIP} since $\{\omega',\omega, \omega_0\}$ is a partition of $\sn$. Notice that \eqref{stat 2} also shows that $\eta_o(u)\cap \balpha_L(\omega')=\emps$. This combined with \eqref{stat 1} implies \eqref{stat 3} which completes the proof of all three equations. Now, if $\lambda(\eta_o(u))>0$ then \eqref{stat 1}, \eqref{stat 2} and \eqref{stat 3} combined imply that $\lambda(K,\omega')\neq\lambda(L,\omega')$, and, hence, $\lambda(K,\cdot)\neq \lambda(L,\cdot)$.

%

We are left to show that $\eta(u)\subset \eta_o(u)$. Pick any $v\in \eta(u)$. Using definition of $\eta(u)$ and Proposition \ref{ess properties} we obtain that $v=P((1-t)v_1+tv_2)$ for some $t\in(0,1),v_1\in F(K^*,u),v_2\in F(L^*,u)$ and $v\notin(\balpha_K(u)\cup\balpha_L(u))$. Let $x_0:=r_K(u)=r_L(u)$ and $P=\{x\in\rn\mid xv=x_0v\}$. 

We want to show that if for some $w\in\sn$, $r_K(w)\in H_K(v)$, then $w\in \omega'$. Let $x'$ be orthogonal projection of $r_K(w)$ onto $P$. Let
\begin{equation}
	r_K(w)=x'+cv
\end{equation}
for some constant $c$. Since $v\notin\balpha_K(u)$ we have that \begin{equation}
\begin{split}
	x'v+c&=(x'+cv)v=r_K(w)v=h_K(v)>r_K(u)v=x_0v \Leftrightarrow \\
	c&>x'v-x_0v=0
\end{split}
\end{equation}  
where the last equality holds since $x'\in P$. Now, since $v_1\in F(K^*,u)$ and $x_0=r_K(u)$ we write
\begin{equation}
	{x_0v_1} =h_K(v_1)\geq r_K(w)v_1=x'v_1+cvv_1=x'v_1+c \frac {(1-t)v_1+tv_2} {\lVert (1-t)v_1+tv_2\rVert }v_1>x'v_1
\end{equation}
where the last step we used $c>0$ and the fact  that$v,v_1,v_2$ are contained in some $u_{\frac \pi 2 - \eps}$ for some epsilon from Lemma 3.8 in \cite{Semenov GIP}, which forces $vv_1$ and $vv_2$ to be positive. 

We obtain
\begin{equation}\label{one part}
	x_0v_1 > x'v_1.
\end{equation}
Now, 
\begin{equation}\label{comput}
\begin{split}
	x'v&=x_0v \Leftrightarrow \\
	x'\frac {(1-t)v_1+tv_2} {\lVert (1-t)v_1+tv_2\rVert }&=x_0\frac {(1-t)v_1+tv_2} {\lVert (1-t)v_1+tv_2\rVert }
\end{split}
\end{equation}
Using \eqref{one part} and recalling that $0<t<1$ we obtain that \eqref{comput} implies
\begin{equation}
	x_0v_2<x'v_2.
\end{equation}
Therefore, using previous proposition and the fact that $v_2\in F(L^*,u)$ and $x_0=r_L(u)$, we obtain
\begin{equation}
	r_K(w)v_2=x'v_2+cvv_2>x_0v_2=h_L(v_2).
\end{equation}
Thus, $r_K(w)\notin L$, and, hence, $r_K(w)>r_L(w)$ which implies that $w\in \omega'$ which is what we wanted to show.

Now, since in the last paragraph we showed that there doesn't exist $w\in \omega\cup\omega_0$ such that $r_K(w)\in H_K(v)$, we have that $v\notin\balpha_K(\omega\cup\omega_0)$. In the same way, we obtain that $v\notin\balpha_L(\omega'\cup\omega_0)$ and, thus, $v\in \eta_o(u)$. We established that $\eta(u)\subset \eta_o(u)$ and this completes the proof. 
\end{proof}

We are ready to show Theorem \ref{main}.

 \begin{thm}
 Let $K,L\in \kno$. Suppose $\lambda(K,\cdot)=\lambda(L,\cdot)$ are finite Borel measures for a spherical Lebesgue submeasure $\lambda$. Then $\forall \omega\subset S^{n-1}$ Borel sets $\lambda(\balpha_K(\omega)\triangle\balpha_L(\omega))=0$, where $\triangle$ denotes the symmetric difference between two sets.
 \end{thm}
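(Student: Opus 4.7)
The plan is to argue by contradiction, exploiting all the lemmas developed in the section in sequence. Suppose, toward contradiction, that there exists some Borel $\omega \subset S^{n-1}$ for which $\lambda(\balpha_K(\omega)\triangle\balpha_L(\omega)) > 0$. The goal is to ultimately produce a single point $u \in S^{n-1}$ with $\lambda(\eta_o(u)) > 0$, which by Lemma \ref{eta_o} contradicts $\lambda(K,\cdot) = \lambda(L,\cdot)$.

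First I would apply Lemma \ref{gamma} to extract a compact spherically convex body $\gamma \subset S^{n-1}$ contained in an open hemisphere such that $\balpha_K(\gamma) \cup \balpha_L(\gamma)$ also lies in an open hemisphere, such that $\lambda(\balpha_K(\partial\gamma)) = \lambda(\balpha_L(\partial\gamma)) = 0$, and (crucially) such that
\begin{equation}
\lambda\Bigl(\balpha_K(\gamma)\triangle\balpha_L(\gamma) \setminus \bigl(\balpha_K(\partial\gamma)\cup\balpha_L(\partial\gamma)\bigr)\Bigr) > 0.
\end{equation}
Invoking Lemma \ref{sub} on this $\gamma$, the set on the left is contained in $\bigcup_{0<t<1}\balpha_{K\hat{+}_tL}(\partial\gamma)$, which by Proposition \ref{ess properties} equals $\bigcup_{u\in\partial\gamma}\bigcup_{0<t<1}\balpha_{K\hat{+}_tL}(u)$.

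Next I would observe that every $v$ in the left-hand side actually lies in $\eta(u)$ for some $u \in \partial\gamma$: such a $v$ lies in $\balpha_{K\hat{+}_tL}(u)$ for some $u \in \partial\gamma$ and $t \in (0,1)$, but because $v \notin \balpha_K(\partial\gamma) \cup \balpha_L(\partial\gamma)$, we have $v \notin \balpha_K(u) \cup \balpha_L(u)$, matching the definition \eqref{eta} of $\eta(u)$. Hence
\begin{equation}
\balpha_K(\gamma)\triangle\balpha_L(\gamma) \setminus \bigl(\balpha_K(\partial\gamma)\cup\balpha_L(\partial\gamma)\bigr) \subset \bigcup_{u\in\partial\gamma}\eta(u) \subset \bigcup_{u\in\partial\gamma}\eta_o(u),
\end{equation}
where the last inclusion is the containment $\eta(u) \subset \eta_o(u)$ proved in Lemma \ref{eta_o}.

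The main obstacle is the last step: passing from positive $\lambda$-measure of this uncountable union to positive $\lambda$-measure of a single $\eta_o(u)$. I would overcome this by using that each $\eta_o(u)$ is open (shown inside Lemma \ref{eta_o}) and that $S^{n-1}$ is second countable, hence Lindelöf. This lets me select a countable subcollection $\{u_n\}_{n\in\mathbb{N}} \subset \partial\gamma$ with $\bigcup_{u\in\partial\gamma}\eta_o(u) = \bigcup_{n}\eta_o(u_n)$. By countable subadditivity of the submeasure $\lambda$ (Definition \ref{submeasure defenition}),
\begin{equation}
0 < \lambda\Bigl(\bigcup_{n}\eta_o(u_n)\Bigr) \leq \sum_{n} \lambda(\eta_o(u_n)),
\end{equation}
so $\lambda(\eta_o(u_{n_0})) > 0$ for some index $n_0$. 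Lemma \ref{eta_o} then forces $\lambda(K,\cdot) \neq \lambda(L,\cdot)$, contradicting the hypothesis and completing the proof. The reverse implication (that $\balpha_K$ and $\balpha_L$ agreeing $\lambda$-almost everywhere as multivalued maps yields equality of the Gauss Image measures) is immediate from the definition $\lambda(K,\omega) = \lambda(\balpha_K(\omega))$ together with monotonicity.
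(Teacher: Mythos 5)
Your proof is correct and follows essentially the same approach as the paper: reduce to a compact spherically convex body $\gamma$ via Lemma \ref{gamma}, apply Lemma \ref{sub} and Proposition \ref{ess properties} to trap the symmetric difference inside $\bigcup_{u\in\partial\gamma}\eta_o(u)$, then use second countability (Lindel\"of) plus countable subadditivity to extract a single $u$ with $\lambda(\eta_o(u))>0$, contradicting Lemma \ref{eta_o}. The paper's equation \eqref{4.50} carries out the same chain of inclusions you describe verbally.
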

 \begin{proof}
 	Suppose there exist $\omega$ Borel such that $\lambda(\balpha_K(\omega)\triangle\balpha_L(\omega))>0$. Then, using Lemma \ref{gamma} there exist a compact spherically convex body $\gamma$ such that \begin{equation}\label{final steps}
 		\lambda \Big (\balpha_K(\gamma)\triangle\balpha_L(\gamma) \setminus \big (\balpha_K(\partial\gamma)\cup\balpha_L(\partial\gamma)\big )\Big )>0
 	\end{equation}
 	Recall that we proved in Lemma \ref{sub} (or Lemma \ref{same for strictly} instead if we assume $K,L$ are $C^1$ strictly convex bodies) that for $\gamma$ compact spherically convex body: 
 	\begin{equation}
	\balpha_K(\gamma)\triangle\balpha_L(\gamma) \setminus \big (\balpha_K(\partial\gamma)\cup\balpha_L(\partial\gamma)\big )\subset  \bigcup_{0< t<1}\balpha_{K\hat +_tL}(\partial\gamma)
\end{equation}
This implies 
\begin{equation}\label{4.50}
\begin{split}
	&\balpha_K(\gamma)\triangle\balpha_L(\gamma) \setminus \big (\balpha_K(\partial\gamma)\cup\balpha_L(\partial\gamma)\big )\subset  \bigcup_{0< t<1}\balpha_{K\hat +_tL}(\partial\gamma)\setminus\big (\balpha_K(\partial\gamma)\cup\balpha_L(\partial\gamma)\big ) \subset \\
	& \bigcup_{u\in \partial \gamma}  \bigcup_{0< t<1}\balpha_{K\hat +_tL}(u)\setminus\big (\balpha_K(\partial\gamma)\cup\balpha_L(\partial\gamma)\big ) \subset \\
	 &\bigcup_{u\in \partial \gamma} \Big ( \bigcup_{0< t<1}\balpha_{K\hat +_tL}(u) \setminus \big (\balpha_K(u)\cup\balpha_L(u) \big ) \Big ) = \\
	 & \bigcup_{u\in \partial \gamma} \eta(u) \subset \\
	 & \bigcup_{\substack{u\in \partial \gamma \\ \eta(u)\neq \emps} } \eta_o(u)
\end{split}
\end{equation}
where in the last line we used notation from Lemma \ref{eta_o}. As $\sn$ is a second countable space every subspace of it is also second countable and, in particular, Lindel\"{o}f. From \eqref{4.50} the set $\{\eta_o(u)\mid u\in \partial \gamma \text{ and } \eta(u)\neq \emps\}$ provided an open cover for the set in \eqref{final steps}, and, hence, there is a countable (not necessarily finite) subcover $\{\eta_0(u_i)\mid u_i\in\partial \gamma\ \text{ and } i\in\mathbb{N}\}$ covering the set from \eqref{final steps}. Therefore, from equation \eqref{final steps} we obtain
\begin{equation}
	\lambda(\bigcup_{i\in \mathbb{N}}\eta_o(u_i))>0.
\end{equation}
Therefore, since $\lambda$ is subadditive at least one of the sets $\eta_o(u_i)$ has a positive measure. Invoking Lemma \ref{eta_o} for this set we obtain that $\lambda(K,\cdot)\neq \lambda(L,\cdot)$.
 \end{proof}

\section{Applications}\label{Section Appl}

We now turn to applications of the main theorem and machinery established. We will show that the measure-theoretic relation between $\balpha_K$ and $\balpha_L$ implies geometric relation between bodies $K^*$ and $L^*$. If we consider submeasure $\lambda$ supported on a very small set it is clear that bodies $K$ and $L$ can be very different at those radial directions $\omega$ for which $\lambda(K,\omega)=\lambda(L,\omega)=0$. Yet, we are going to show that the bodies are scalings of each other when we restrict our attention to support of submeasure $\lambda$.

Recall that the support of submeasure $\lambda$ is defined as
\begin{equation}\label{Definition of supporrt}
	\spt\lambda=\{v\in\sn\mid \text{for every open neighborhood }N_v \text{ of v, } \lambda(N_v)>0\}
\end{equation}
The support of submeasure is known to be a closed set. Consider the following Lemma:

\begin{lem}\label{gradient}
	Given $K,L\in\kno$ and $u\in\sn$. Suppose $\balpha_K^*(u)=\balpha_L^*(u)=v$. Then \begin{equation}\label{gradient vanish}
		\nabla\frac{\rho_{K^*}}{\rho_{L^*}}(u)=0.
	\end{equation}
\end{lem}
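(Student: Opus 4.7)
The plan is to derive the vanishing of $\nabla (\rho_{K^*}/\rho_{L^*})(u)$ from the classical formula expressing the outer unit normal to the boundary of a convex body (containing the origin) in terms of the radial function and its spherical gradient. First, I would observe that the hypothesis $\balpha_K^*(u) = \balpha_L^*(u) = v$, combined with the identity $\balpha^*_K = \balpha_{K^*}$, means that both $K^*$ and $L^*$ admit $v$ as the unique outer unit normal at their respective boundary points $\rho_{K^*}(u) u$ and $\rho_{L^*}(u) u$. By the standard correspondence for convex bodies with the origin in the interior, this implies that both $\rho_{K^*}$ and $\rho_{L^*}$ are differentiable at $u$ as functions on $\sn$.

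Next, I would parametrize $\partial K^*$ by $\phi(u) = \rho_{K^*}(u) u$ on $\sn$. For $\xi \in T_u\sn = u^\perp$, a direct computation gives
\[
d\phi|_u(\xi) = \langle \nabla \rho_{K^*}(u), \xi \rangle\, u + \rho_{K^*}(u)\, \xi,
\]
where $\nabla$ denotes the spherical gradient. Imposing $v \perp d\phi|_u(\xi)$ for every $\xi \in u^\perp$ and using $v \cdot u > 0$ (because the origin lies in the interior of $K^*$) yields the well-known identity
\[
v = \frac{\rho_{K^*}(u)\, u - \nabla \rho_{K^*}(u)}{\sqrt{\rho_{K^*}(u)^2 + |\nabla \rho_{K^*}(u)|^2}},
\]
and an identical calculation yields the analogous formula with $K^*$ replaced by $L^*$.

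Finally, I would equate the two expressions for $v$ and compare their components along $u$ and along $u^\perp$ separately. The $u$-component equality fixes the ratio of the two square roots in the denominators, and then the $u^\perp$-component equality collapses to
\[
\nabla \rho_{K^*}(u) = \frac{\rho_{K^*}(u)}{\rho_{L^*}(u)}\, \nabla \rho_{L^*}(u).
\]
Plugging this relation into the quotient rule
\[
\nabla \frac{\rho_{K^*}}{\rho_{L^*}}(u) = \frac{\rho_{L^*}(u)\, \nabla \rho_{K^*}(u) - \rho_{K^*}(u)\, \nabla \rho_{L^*}(u)}{\rho_{L^*}(u)^2}
\]
makes the numerator vanish, which is the desired conclusion.

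I do not expect any serious obstacle here. The only delicate point is the passage from the set-theoretic hypothesis $\balpha_{K^*}(u) = \{v\}$ to pointwise differentiability of $\rho_{K^*}$ at $u$, but this is a standard fact: for a convex body with the origin in the interior, the radial function is differentiable at $u$ exactly when the body has a unique supporting hyperplane at $\rho(u) u$ (see \cite{S14}). Everything else is a bookkeeping computation with the classical normal formula.
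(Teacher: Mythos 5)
Your proposal is correct, and it is essentially the argument the paper has in mind: the paper omits the proof of Lemma~\ref{gradient} entirely, remarking only that it ``naturally follows from differentiation of radial functions of bodies $K$ and $L$ and is not needed for any result of this work.'' Your computation -- passing from the set-theoretic hypothesis to differentiability of $\rho_{K^*}$ and $\rho_{L^*}$ at $u$, writing the unit normal as $v = (\rho u - \nabla\rho)/\sqrt{\rho^2 + |\nabla\rho|^2}$ for each body, equating the $u$- and $u^\perp$-components, and applying the quotient rule -- is exactly the differentiation-of-radial-functions argument the paper alludes to, now fleshed out into a complete proof.
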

\begin{proof}
	We omit the proof of this result and it naturally follows from differentiation of radial functions of bodies $K$ and $L$ and is not needed for any result of this work.
\end{proof}
The Lemma \ref{gradient} was used in \cite{GIP} to show that if $\lambda$ is absolutely continuous, strictly positive on open sets and $\lambda(K,\cdot)=\lambda(L,\cdot)$ then $K$ is scaling of $L$. Using similar ideas one possible relaxation would be to establish \eqref{gradient vanish}
on $\spt \lambda$ from Theorem \ref{main}. Then, using  equation \eqref{gradient vanish} prove that $K^*$ and $L^*$ are dilates on any connected component $D$ of $\spt \lambda $. However, this approach doesn't work for the full generality of the problem. 

First of all, if submeasure $\lambda$ is supported on a small set there might be  no $u\in \spt \lambda$ such that $\balpha_K(u)$ and $\balpha_L(u)$ are vectors. But even then, one can construct an example of $K,L\in\kno$ such that $\lambda(K,\cdot)=\lambda(L,\cdot)$ and for all $u\in\spt \lambda$, $\balpha_K^*(u)\neq\balpha_L^*(u)$ which would imply that \eqref{gradient vanish} doesn't hold anywhere on $\spt \lambda$.
   
More interestingly, let us address a very curious example constructed by Hassler Whitney in 1935, see \cite{Whitney}. He constructed a $C^1$ function $f$ on the plane such that $\nabla f=0$ on an arc $D\subset \mathbb {R}^2$ and, yet, $f$ is not constant on $D$. Clearly, something like this is not possible on a real line or if $D$ is some $C^1$ curve. The arc $D$ in Whitney's example is a fractal curve that
has an infinite length between any two points. In particular, knowledge of \eqref{gradient vanish} at any point of path-connected component $D\subset \spt \lambda$ might not be sufficient to insure that $K^*$ is a dilate of $L^*$ on $D$.

Guided by these ideas we are going to show that if $\lambda(K,\cdot)=\lambda(L,\cdot)$ then $K^*$ is a dilate of $L^*$ on each rectifiable path-connected component. Let us start with the definition of rectifiability.

\begin{defi}
	 Let $\Pi$ be a set of all partitions of $[0,1]$ such that if $\pi \in \Pi$ then \begin{equation}
	 	\pi=(t_0,t_1,\ldots ,t_k)
	 \end{equation}
	 with $t_0=0$ and $t_k=1$. Given $\pi \in \Pi$ and a map $\gamma:[0,1]\rightarrow \sn$ define 
	 
	 \begin{equation}
	 	l(\pi,\gamma)= \sum _{i=1}^{k}d\big (\gamma(t_i),\gamma(t_{i+1})\big ).
	 \end{equation}
	 
	 A continuous map $\gamma:[0,1]\rightarrow \sn$ is called rectifiable curve if
	\begin{equation}
		L(\gamma)=\sup_{\pi \in \Pi}l(\pi,\gamma)<+\infty 
	\end{equation} 
\end{defi}

Now we define a stronger version of a path-connected set to exclude Whitney's example from our discussion. 

\begin{defi}\label{rectifiable path connected}
	The set $D\subset \sn$ is called a rectifiable  path connected if given any $x,y\in D$ there exists a rectifiable path $\gamma:[0,1]\rightarrow D$ such that $\gamma(0)=x$ and $\gamma(0)=y$.  
\end{defi}

With this definition in mind, our main result of this section is Theorem \ref{main 2}:

\begin{thm}\label{main 21}
	
	Let $K,L\in\kno$  such that $\lambda(K,\cdot)=\lambda(L,\cdot)$ are finite Borel measures for a spherical Lebesgue sumbeasure $\lambda$. Then on each rectifiable  path connected component $D\subset \spt \lambda$, $K^*$ is a dilate of $L^*$. Alternatively, for each $v_1,v_2\in D$ we have \begin{equation}
		\frac{h_K(v_1)}{h_L(v_1)}=\frac{h_K(v_2)}{h_L(v_2)}
	\end{equation}
\end{thm}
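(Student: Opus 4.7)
The plan is to reduce the conclusion to a Lipschitz (arc-length-parametrized) rectifiable curve $\gamma\colon [0,1]\to D\subset\spt\lambda$ joining $v_1$ to $v_2$ and to show that $f(t):=h_K(\gamma(t))/h_L(\gamma(t))$ is constant on $[0,1]$. Because $h_K, h_L$ are Lipschitz on $\sn$ (as support functions of bounded bodies) and $h_L\ge r_L>0$, the compositions $h_K\circ\gamma$, $h_L\circ\gamma$, and $f\circ\gamma$ are all Lipschitz on $[0,1]$, hence absolutely continuous and differentiable a.e. It will therefore suffice to show $(f\circ\gamma)'(t)=0$ for a.e.\ $t\in[0,1]$, since then $f(v_2)-f(v_1)=\int_0^1(f\circ\gamma)'(t)\,dt=0$.

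The central tool is a subdifferential chain rule. The convex $1$-homogeneous function $h_K$ on $\rn$ has subdifferential $\partial h_K(u)=F(K,u)$, so for any $w\in\balpha_{K^*}(\gamma(t))$ one has $r_K(w)\in F(K,\gamma(t))=\partial h_K(\gamma(t))$ and therefore
\[
h_K(\gamma(t+h))-h_K(\gamma(t))\ge r_K(w)\cdot\bigl(\gamma(t+h)-\gamma(t)\bigr)
\]
for every $h$. Dividing by $h>0$ and by $h<0$ separately and letting $h\to 0^{\pm}$ pinches both one-sided limits to $r_K(w)\cdot\gamma'(t)$, so whenever $(h_K\circ\gamma)'(t)$ and $\gamma'(t)$ exist,
\[
(h_K\circ\gamma)'(t)=r_K(w)\cdot\gamma'(t)=\rho_K(w)(w\cdot\gamma'(t)),
\]
and analogously $(h_L\circ\gamma)'(t)=\rho_L(w)(w\cdot\gamma'(t))$ for any $w\in\balpha_{L^*}(\gamma(t))$. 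Since $\gamma(t)\in\spt\lambda$, Theorem~\ref{new additional main} provides a common $w\in\balpha_{K^*,L^*}(\gamma(t))=\balpha_{K^*}(\gamma(t))\cap\balpha_{L^*}(\gamma(t))$, so both identities can be applied with the same $w$.

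Substituting these into the quotient rule for $f$ and using $h_K(\gamma(t))=\rho_K(w)(w\cdot\gamma(t))$ and $h_L(\gamma(t))=\rho_L(w)(w\cdot\gamma(t))$ (valid since $w\in\balpha_{K^*,L^*}(\gamma(t))$), the numerator collapses:
\[
(w\cdot\gamma'(t))\bigl[\rho_K(w)h_L(\gamma(t))-h_K(\gamma(t))\rho_L(w)\bigr]=(w\cdot\gamma'(t))\cdot 0=0,
\]
so $(f\circ\gamma)'(t)=0$ for a.e.\ $t$, which yields $f(v_1)=f(v_2)$ and the theorem. I expect no genuine obstacle: every ingredient --- classical convex subdifferential calculus, the nonemptiness-plus-continuity of $\balpha_{K^*,L^*}$ on $\spt\lambda$ supplied by Theorem~\ref{new additional main}, and absolute continuity of Lipschitz functions --- is already in place. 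The hypothesis of \emph{rectifiable} path-connectedness enters precisely at the reduction step, as it is what permits a Lipschitz parametrization of $\gamma$; without it one would face the Whitney-type pathology flagged at the start of this section, where pointwise vanishing of a Clarke-type derivative cannot be integrated to global constancy.
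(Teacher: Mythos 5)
Your proof is correct but takes a genuinely different route from the paper's. The paper first establishes Lemma \ref{derivative is zero}, a pointwise ``the derivative of $\rho_{K^*}/\rho_{L^*}$ vanishes on $\spt\lambda$'' statement in quantitative form: for each $u\in\spt\lambda$ and each $\eps>0$ the ratio is $\eps$-Lipschitz on $u_\delta\cap\spt\lambda$. That lemma is proved by a two-sided sandwich of support-function inequalities that requires picking common normals at \emph{both} $u$ and the nearby point $u'$, followed by an explicit mean-value estimate on an auxiliary smooth function $f(x,y)$; the proof of Theorem \ref{main 21} then arc-length-parametrizes $\gamma$, covers the compact parameter interval by finitely many of the local $\eps$-Lipschitz balls, telescopes, and sends $\eps\to 0$. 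You instead push everything onto the parameter interval at once: $\gamma$, $h_K\circ\gamma$, $h_L\circ\gamma$, and the quotient are Lipschitz hence differentiable a.e., and at a point of differentiability the subgradient inequality $h_K(\gamma(t+h))\geq h_K(\gamma(t))+r_K(w)\cdot(\gamma(t+h)-\gamma(t))$, applied from both sides in $h$, already pins down $(h_K\circ\gamma)'(t)=r_K(w)\cdot\gamma'(t)$ using a common normal $w$ at the single base point $\gamma(t)$ (supplied by Lemma \ref{nonempty} / Theorem \ref{new additional main}); the quotient-rule numerator then cancels identically via $h_K(\gamma(t))=\rho_K(w)(w\cdot\gamma(t))$ and the $L$-analogue, and integrating the a.e.-vanishing derivative of a Lipschitz (hence absolutely continuous) function gives constancy. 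Both arguments hinge on the nonemptiness of $\balpha_{K^*}\cap\balpha_{L^*}$ on $\spt\lambda$; yours is leaner because the subgradient inequality delivers both one-sided bounds from a single normal at the base point, so you dispense with the two-sided sandwich, the implicit/mean-value estimate, and the $\eps$-compactness step entirely, paying only the modest price of invoking Rademacher-type a.e. differentiability and the fundamental theorem of calculus for absolutely continuous functions rather than an elementary telescoping sum.
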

\begin{rmk}
	In particular, one can think about this in terms of tangential bodies.  
\end{rmk}

Note that, outside of fractal examples, the rectifiable condition is not such a strong assumption. For example, any open connected subset of $\spt \lambda$ is rectifiable path-connected. 
 In particular, this shows that the uniqueness of the Gauss Image Problem is a local property compared to the existence result for which you need global information. This result immediately implies the uniqueness results established in \cite{GIP} and \cite{Aleks}. The only thing which can go wrong is if measure $\lambda$ is supported on some sort of fractal similar to arc from example in \cite{Whitney}.  
 
 We start with some preliminary lemmas.
 
 \begin{lem}\label{nonempty}
 	Let $K,L\in \kno$. Suppose $\lambda(K,\cdot)=\lambda(L,\cdot)$ are finite Borel measures for a spherical Lebesgue submeasure $\lambda$. Then for any $u\in\spt \lambda$ \begin{equation}
 		\balpha_{K^*}(u)\cap \balpha_{L^*}(u)\neq \emps.
 	\end{equation}
 \end{lem}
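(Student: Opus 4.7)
The plan is to argue by contradiction: assume some $u \in \spt \lambda$ has $\balpha_{K^{*}}(u) \cap \balpha_{L^{*}}(u) = \emps$, and deduce that a small open cap $u_\eps$ has $\lambda$-measure zero, contradicting $u \in \spt \lambda$. The sets $\balpha_{K^{*}}(u) = P(F(K,u))$ and $\balpha_{L^{*}}(u) = P(F(L,u))$ are compact (continuous spherical projections of compact faces), so under the contradiction hypothesis one can separate them by disjoint open sets $U \supset \balpha_{K^{*}}(u)$ and $V \supset \balpha_{L^{*}}(u)$ in $\sn$. Continuity of the radial Gauss image (Proposition \ref{continuity} applied to the bodies $K^{*}$ and $L^{*}$) then produces $\eps>0$ with $\balpha_{K^{*}}(u_\eps) \subset U$ and $\balpha_{L^{*}}(u_\eps) \subset V$.

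The critical move is to apply Theorem \ref{main} not to anything involving $u_\eps$ itself, but to the \emph{closed} (hence Borel) set $U^c$. Two easy set-theoretic inclusions do the work. First, $u_\eps \cap \balpha_K(U^c) = \emps$: if some $v \in u_\eps$ were hit from a point $w \in U^c$ via $\balpha_K$, then $w \in \balpha_{K^{*}}(v) \subset \balpha_{K^{*}}(u_\eps) \subset U$, contradicting $w \in U^c$. Second, $u_\eps \subset \balpha_L(U^c)$: for every $v \in u_\eps$ the set $\balpha_{L^{*}}(v)$ is nonempty and sits inside $V \subset U^c$, and any $w$ in it witnesses $v \in \balpha_L(w) \subset \balpha_L(U^c)$.

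Combining these gives $u_\eps \subset \balpha_L(U^c) \setminus \balpha_K(U^c) \subset \balpha_K(U^c) \triangle \balpha_L(U^c)$. Since $U^c$ is closed and $\balpha_K,\balpha_L$ send closed sets to closed sets (as recalled in Section \ref{Section Prel}), the symmetric difference is Borel, so Theorem \ref{main} forces its $\lambda$-measure to vanish. Consequently $\lambda(u_\eps)=0$, contradicting $u \in \spt \lambda$.

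The only subtle point is a measurability bookkeeping issue: the direct image $\balpha_{K^{*}}(u_\eps)$ is only guaranteed to be Lebesgue measurable, so feeding it into Theorem \ref{main} (stated for Borel sets) would be awkward. Passing to the complementary closed set $U^c$ bypasses this entirely and keeps both sides of the symmetric difference closed. Beyond that, the proof is essentially a one-sided separation argument stitched together with continuity of $\balpha_{K^{*}}$ and the main uniqueness theorem, and I do not expect any further technical obstacles.
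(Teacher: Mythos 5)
Your proof is correct and follows essentially the same route as the paper's: separate the two disjoint compact sets by continuity of the radial Gauss image, propagate that separation to a small cap $u_\delta$, trap $u_\delta$ inside the symmetric difference $\balpha_K(\omega)\triangle\balpha_L(\omega)$ for a suitable Borel $\omega$, and invoke Theorem \ref{main} to contradict $u\in\spt\lambda$. The only (cosmetic) difference is your choice of Borel test set: you use the closed complement $U^c$, whereas the paper uses the open outer parallel set $\omega=\balpha_{K^*}(u)_\eps$, which is itself already Borel, so the measurability concern you raise does not actually arise in the paper's version either.
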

 \begin{proof} 
 	Suppose not. Since $\balpha_{K^*}(u), \balpha_{L^*}(u)$ are compact sets which don't intersect there exist and $\eps >0$ such that \begin{equation}\label{5.5 equation 0}
 		\eps =2\inf_{\substack{x\in \balpha_{K^*}(u) \\ y\in \balpha_{L^*}(u)}} d(x,y).
 	\end{equation}
From continuity of radial Gauss Image map, Proposition \ref{continuity}, we obtain that there exist $\delta>0$ such that \begin{equation}\label{5.5 equation 1}\begin{split}
 		\balpha_{K^*}(u_\delta)\subset \balpha_{K^*}(u)_\eps \\  
 		\balpha_{L^*}(u_\delta)\subset \balpha_{L^*}(u)_\eps	\end{split}
 	\end{equation}
 	We rewrite \eqref{5.5 equation 0} in the form
 	\begin{equation}\label{5.5 don't intersetct}
 		\balpha_{K^*}(u)_\eps \cap \balpha_{L^*}(u)_\eps = \emps.
 	\end{equation}
 	
 Let $\omega=\balpha_{K^*}(u)_\eps$. Then \eqref{5.5 equation 1} implies 
 \begin{equation}\label{5.5 one more}
 	u_\delta \subset \balpha_{K}(\balpha_{K^*}(u_\delta))\subset \balpha_{K}(\balpha_{K^*}(u)_\eps)= \balpha_{K}(\omega).
 \end{equation}
Suppose $u_\delta \cap \balpha_L(\omega)$ is not empty. Let $v\in u_\delta \cap \balpha_L(\omega)$. Then using \eqref{5.5 equation 1} we obtain
\begin{equation}
	\balpha_{L^*}(v)\subset \balpha_{L^*}(u_\delta)\subset \balpha_{L^*}(u)_\eps 
\end{equation}
Thus, from \eqref{5.5 don't intersetct}, $\balpha_{L^*}(v)\cap \omega = \emps$ which contradicts that $v\in \balpha_L(\omega)$. This establishes that
\begin{equation}\label{5.5 another one}
	u_\delta \cap \balpha_L(\omega)=\emps. 
\end{equation}
 
 Combining, \eqref{5.5 one more} and \eqref{5.5 another one} we obtain \begin{equation}
 	u_\delta\subset \balpha_K(\omega) \triangle\balpha_L(\omega).
 \end{equation}
 Since $u_\delta\cap \spt \lambda \neq \emps$ and $u_\delta$ is open, this implies \begin{equation}
 	\lambda(\balpha_K(\omega)\triangle\balpha_L(\omega))>0
 \end{equation}
 which contradicts Theorem \ref{main}.
 \end{proof}
 
 \begin{lem}\label{continuity of K and L}
 	Let $K,L\in \kno$. Suppose $\lambda(K,\cdot)=\lambda(L,\cdot)$ are finite Borel measures for a spherical Lebesgue submeasure $\lambda$. Let $u\in\spt \lambda$. Then for any $\eps>0$ there exist $\delta >0$ such that \begin{equation}
 		\balpha_{K^*}(u_\delta)\cap \balpha_{L^*}(u_\delta)\subset (\balpha_{K^*}(u)\cap\balpha_{L^*}(u))_\eps
 	\end{equation}
 \end{lem}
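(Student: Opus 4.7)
The plan is to argue by contradiction using the continuity of the radial Gauss Image map established in Proposition \ref{continuity}. Suppose the conclusion fails: there exist $\eps>0$ and a sequence $\delta_i \to 0$ together with points $v_i \in \balpha_{K^*}(u_{\delta_i}) \cap \balpha_{L^*}(u_{\delta_i})$ such that $v_i \notin (\balpha_{K^*}(u)\cap\balpha_{L^*}(u))_\eps$ for every $i$. Note that such $v_i$ exist at least for $\delta_i$ small because of Lemma \ref{nonempty}: since $u\in \spt\lambda$, the intersection $\balpha_{K^*}(u)\cap\balpha_{L^*}(u)$ is already nonempty, and it sits inside $\balpha_{K^*}(u_{\delta_i})\cap\balpha_{L^*}(u_{\delta_i})$.

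Unpacking the definition of the outer parallel set, for each $i$ there are points $u_i^K, u_i^L \in u_{\delta_i}$ with $v_i \in \balpha_{K^*}(u_i^K)$ and $v_i \in \balpha_{L^*}(u_i^L)$; in particular $u_i^K \to u$ and $u_i^L \to u$ as $i\to\infty$. By compactness of $\sn$, after passing to a subsequence we may assume $v_i \to v$ for some $v\in\sn$. Applying Proposition \ref{continuity} to the body $K^*$ with the sequence $u_i^K \to u$ and the choice $v_i \in \balpha_{K^*}(u_i^K)$ gives $d(v_i,\balpha_{K^*}(u))\to 0$; combined with $v_i\to v$ and the closedness of $\balpha_{K^*}(u)$, this forces $v \in \balpha_{K^*}(u)$. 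The same argument for $L^*$ and the sequence $u_i^L \to u$ yields $v\in\balpha_{L^*}(u)$.

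Therefore $v \in \balpha_{K^*}(u)\cap\balpha_{L^*}(u)$. Since $(\balpha_{K^*}(u)\cap\balpha_{L^*}(u))_\eps$ is open (being a union of spherical caps) and contains every point of $\balpha_{K^*}(u)\cap\balpha_{L^*}(u)$, it is an open neighborhood of $v$. Hence $v_i \in (\balpha_{K^*}(u)\cap\balpha_{L^*}(u))_\eps$ for all sufficiently large $i$, contradicting the choice of the sequence $v_i$.

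The only subtle point in the argument is the passage from $v_i \in \balpha_{K^*}(u_i^K)$ with $u_i^K\to u$, $v_i\to v$ to the conclusion $v\in \balpha_{K^*}(u)$; since $\balpha_{K^*}$ is multivalued, this is not the trivial continuity of a function but rather the upper-hemicontinuity type statement supplied by Proposition \ref{continuity}. This is precisely why the preliminary work on continuity of the multivalued radial Gauss Image map in Section \ref{Section Prop} is essential here, and no further geometric information (such as Theorem \ref{main} itself) is needed beyond that and Lemma \ref{nonempty}.
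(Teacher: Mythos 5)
Your proof is correct and follows essentially the same route as the paper's: a compactness argument on a sequence of putative counterexamples, with Proposition~\ref{continuity} supplying the key upper-hemicontinuity property of the multivalued radial Gauss Image map. The paper first translates, using the Hausdorff-distance part of Proposition~\ref{continuity}, to an inclusion purely between outer parallel sets, namely $\balpha_{K^*}(u)_\delta\cap\balpha_{L^*}(u)_\delta\subset(\balpha_{K^*}(u)\cap\balpha_{L^*}(u))_\eps$, and then runs the subsequence argument on those; you instead extract the witness points $u_i^K,u_i^L,v_i$ directly from the failing inclusion and apply the pointwise part of the proposition. Both versions are sound.

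One small inaccuracy: the remark invoking Lemma~\ref{nonempty} to justify the existence of the $v_i$ is misplaced. The points $v_i \in \balpha_{K^*}(u_{\delta_i})\cap\balpha_{L^*}(u_{\delta_i})\setminus(\balpha_{K^*}(u)\cap\balpha_{L^*}(u))_\eps$ are produced by the negation of the lemma's conclusion, not by Lemma~\ref{nonempty}, which only gives points \emph{inside} the $\eps$-fattening. In fact neither your proof nor the paper's ever uses the hypotheses $u\in\spt\lambda$ or $\lambda(K,\cdot)=\lambda(L,\cdot)$ in this lemma; they are carried along because this lemma is paired with Lemma~\ref{nonempty} to deliver Proposition~\ref{common continuity}.
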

 \begin{proof}
 	By continuity, Proposition \ref{continuity}, given any $\delta>0$, there exist $\delta_1>0$ such that 
 	\begin{equation}
 		\balpha_{K^*}(u_{\delta_1})\cap \balpha_{L^*}(u_{\delta_1})\subset \balpha_{K^*}(u)_\delta\cap \balpha_{L^*}(u)_\delta.
 	\end{equation} 
 	Thus, it is sufficient to prove that given $\eps >0$ there exist $\delta>0$ such that \begin{equation}\label{to show 5.6}
 		\balpha_{K^*}(u)_\delta\cap \balpha_{L^*}(u)_\delta\subset   (\balpha_{K^*}(u)\cap\balpha_{L^*}(u))_\eps
 	\end{equation}
 	Let $\omega=\balpha_{K^*}(u)$ and $\gamma=\balpha_{L^*}(u)$. Recall that $\omega$ and $\gamma$ are compact spherically convex sets. Suppose \eqref{to show 5.6} doesn't hold for some $\eps $, then there exist a sequence $x_n\in \omega_\delta \cap \gamma_\delta \setminus (\omega\cap\gamma)_\eps$ where $\delta = \frac 1 n$. By compactness there exists an $x\in\sn$ such that $x_n\rightarrow x$ along some subsequence. Since $x_n\in \omega_\delta$, and $\omega$ is a closed set, $x\in \omega$. Similarly, $x\in\gamma$, and, thus,  $x\in \omega\cap \gamma$. Therefore, for large enough $n$, $x_n\in  (\omega\cap\gamma)_\eps$ which is a contradiction.  
 \end{proof}
 
 These two Lemmas characterize the continuity of normal cones on $\spt \lambda$. 
 
 \begin{defi}
 By $\balpha_{K^*,L^*}(\omega)$ we denote the simultaneous radial Gauss Image map of bodies $K^*$ and $L^*$ as: \begin{equation}
 	\balpha_{K^*,L^*}(\omega)=\balpha_{K^*}(\omega)\cap\balpha_{L^*}(\omega).
 \end{equation}
 \end{defi}
 
With this definition we obtain Theorem \ref{new additional main}:
\begin{prop}\label{common continuity}
	Let $K,L\in \kno$. Suppose $\lambda(K,\cdot)=\lambda(L,\cdot)$ are finite Borel measures for a spherical Lebesgue submeasure $\lambda$. Then, $\balpha_{K^*,L^*}$ defined on $\spt \lambda$ is a continuous map. That is, for any $\eps>0$ there exist $\delta >0$ such that for any $u\in\spt \lambda$ \begin{equation}
 		\balpha_{K^*,L^*}(u_\delta)\subset \balpha_{K^*,L^*}(u)_\eps.
 	\end{equation}
\end{prop}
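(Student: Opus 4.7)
The proposition is essentially a direct synthesis of Lemma \ref{nonempty} and Lemma \ref{continuity of K and L}, which have been set up exactly for this step. By Lemma \ref{nonempty}, for every $u \in \spt\lambda$ the set $\balpha_{K^*,L^*}(u) = \balpha_{K^*}(u) \cap \balpha_{L^*}(u)$ is nonempty, so $\balpha_{K^*,L^*}$ is a well-defined map from $\spt\lambda$ into the nonempty compact subsets of $\sn$.

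For the continuity inclusion, fix $u \in \spt\lambda$ and $\eps > 0$. Lemma \ref{continuity of K and L} applied at this $u$ produces a $\delta > 0$ with
$$\balpha_{K^*}(u_\delta) \cap \balpha_{L^*}(u_\delta) \subset \bigl(\balpha_{K^*}(u) \cap \balpha_{L^*}(u)\bigr)_\eps,$$
which, after unfolding the definition of $\balpha_{K^*,L^*}$, reads exactly as $\balpha_{K^*,L^*}(u_\delta) \subset \balpha_{K^*,L^*}(u)_\eps$. This is the standard upper-semicontinuity notion of continuity for compact-valued multifunctions and settles continuity at the point $u$.

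To pass to a $\delta$ depending only on $\eps$ (the quantifier order as written in the proposition), I would exploit compactness of $\spt\lambda$ as a closed subset of $\sn$: at each $u \in \spt\lambda$ produce $\delta(u)$ from Lemma \ref{continuity of K and L} with parameter $\eps/2$, cover $\spt\lambda$ by $\{u_{\delta(u)/2}\}_{u \in \spt\lambda}$, pick a finite subcover $u_1,\ldots,u_N$, and set $\delta := \min_i \delta(u_i)/2$. An arbitrary $v \in \spt\lambda$ then lies in some $u_{i,\delta(u_i)/2}$, so $v_\delta \subset u_{i,\delta(u_i)}$, giving $\balpha_{K^*,L^*}(v_\delta) \subset \balpha_{K^*,L^*}(u_i)_{\eps/2}$. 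The hard step, which I expect to be the main obstacle, is to pass from this bound at $u_i$ to a bound at $v$: pointwise upper-semicontinuity is one-sided and does not in general imply uniform upper-semicontinuity for compact-valued multifunctions, so closing the remaining gap requires a contradiction argument combining Proposition \ref{continuity} with the nonemptiness guaranteed by Lemma \ref{nonempty}, extracting along a hypothetical violating sequence $u_n \to u$ both a limit point $x$ of $x_n \in \balpha_{K^*,L^*}(u_{n,1/n})$ and a limit point $y$ of $y_n \in \balpha_{K^*,L^*}(u_n)$, both lying in $\balpha_{K^*,L^*}(u)$, and using Lemma \ref{continuity of K and L} at $u$ to contradict the $\eps$-separation $d(x_n,\balpha_{K^*,L^*}(u_n)) \geq \eps$.
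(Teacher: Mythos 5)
Your first two paragraphs are exactly the paper's proof: cite Lemma \ref{nonempty} for nonemptiness, then Lemma \ref{continuity of K and L} at the fixed $u$ for the inclusion; nothing more is done. The quantifier order $\forall\eps\,\exists\delta\,\forall u$ in the stated proposition appears to be a slip for the pointwise $\forall u\,\forall\eps\,\exists\delta$: Lemma \ref{continuity of K and L} fixes $u$ before producing $\delta$, and both later invocations of Proposition \ref{common continuity} (in the proof of Lemma \ref{derivative is zero}) use it only at a single fixed $u$. Your third paragraph therefore attempts to prove something the paper neither establishes nor uses, and the contradiction you sketch would not close the gap you correctly diagnose: producing cluster points $x$ of $x_n\in\balpha_{K^*,L^*}(u_{n,1/n})$ and $y$ of $y_n\in\balpha_{K^*,L^*}(u_n)$, both lying in $\balpha_{K^*,L^*}(u)$, does not refute $d(x_n,\balpha_{K^*,L^*}(u_n))\ge\eps$, because upper semicontinuity constrains only how $\balpha_{K^*,L^*}(u_n)$ can expand near $u$, not how it can contract, so these sets may recede from $x$ as $u_n\to u$ without contradicting any inclusion the lemmas supply. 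Obtaining a modulus $\delta$ uniform in $u$ would require some lower semicontinuity or Hausdorff continuity of $\balpha_{K^*,L^*}$ on $\spt\lambda$, which is not proved anywhere in the paper and does not follow from the closed-graph and upper-semicontinuity results available.
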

\begin{proof}
	This follows from Lemma \ref{nonempty} and Lemma \ref{continuity of K and L}. 
\end{proof}

The next lemma is an analog of Lemma \ref{gradient} for general radial Gauss Image maps. This roughly means that if $u\in\spt \lambda$ if we restrict our attention to $\spt \lambda$, then radial functions roughly behave like $\nabla \frac{\rho_{K^*}}{\rho_{L^*}}(u)=0$. Since $\spt\lambda$ might not have a differentiable structure we use the equivalent formulation:


\begin{lem}\label{derivative is zero}
	Let $K,L\in \kno$. Suppose $\lambda(K,\cdot)=\lambda(L,\cdot)$ are finite Borel measures for a spherical Lebesgue submeasure $\lambda$. Then, for each $u\in\spt\lambda$ and any $\eps >0$ there exist $\delta>0$ such that for all $u'\in u_\delta\cap \spt \lambda$: \begin{equation}\label{long equation}
		|\frac {\rho_{K^*}}{\rho_{L^*}}(u')-\frac {\rho_{K^*}}{\rho_{L^*}}(u)| \leq \eps |u'-u|.
	\end{equation}
\end{lem}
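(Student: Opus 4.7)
The plan is to work with $f := \rho_{K^*}/\rho_{L^*} = h_L/h_K$ and to exploit the identity $f(u) = \rho_L(w)/\rho_K(w)$, which holds for every $w \in \balpha_{K^*,L^*}(u)$. Indeed, for such $w$ the point $\rho_K(w)w$ lies in $F(K,u)$ and $\rho_L(w)w$ lies in $F(L,u)$, so $h_K(u) = \rho_K(w)(w\cdot u)$ and $h_L(u) = \rho_L(w)(w\cdot u)$, and the common factor $w\cdot u$ cancels in the ratio. Lemma \ref{nonempty} guarantees that $\balpha_{K^*,L^*}(u) \neq \emps$ for each $u \in \spt\lambda$, while Proposition \ref{common continuity} provides the Hausdorff continuity of $u \mapsto \balpha_{K^*,L^*}(u)$ on $\spt\lambda$.

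Fix $u \in \spt\lambda$ and $u' \in u_\delta \cap \spt\lambda$; select $w \in \balpha_{K^*,L^*}(u)$ and $w' \in \balpha_{K^*,L^*}(u')$, and set $p := \rho_K(w)w$, $p' := \rho_K(w')w'$, $q := \rho_L(w)w$, $q' := \rho_L(w')w'$. Introduce the support gaps $\alpha := h_K(u) - p'\cdot u \geq 0$ and $\beta := h_L(u) - q'\cdot u \geq 0$. Since $q' = f(u') p'$, one has $q'\cdot u = f(u')(h_K(u) - \alpha)$, and also $q'\cdot u = h_L(u) - \beta$; equating and dividing by $h_K(u)$ yields the key identity
\begin{equation*}
f(u) - f(u') = \frac{\beta - f(u')\alpha}{h_K(u)},
\end{equation*}
so that $|f(u) - f(u')| \leq C(\alpha + \beta)$ for a constant $C$ depending only on the inradii and circumradii of $K$ and $L$. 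On the other hand, cyclic monotonicity of the Gauss map gives $\alpha + [h_K(u') - p\cdot u'] = (p - p')\cdot(u - u')$ with both summands on the left nonnegative; hence $\alpha \leq |p - p'|\,|u - u'|$, and symmetrically $\beta \leq |q - q'|\,|u - u'|$. Combining,
\begin{equation*}
|f(u) - f(u')| \leq C(|p - p'| + |q - q'|)\,|u - u'|.
\end{equation*}

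Given $\eps > 0$, Proposition \ref{common continuity} together with the uniform continuity of the maps $w \mapsto \rho_K(w)w$ and $w \mapsto \rho_L(w)w$ on $\sn$ allows us to choose $\delta > 0$ so small that, for every $u' \in u_\delta \cap \spt\lambda$ and every $w' \in \balpha_{K^*,L^*}(u')$, we can pick $w \in \balpha_{K^*,L^*}(u)$ satisfying $|p - p'| + |q - q'| \leq \eps/C$, which delivers the desired bound. The main obstacle is that Proposition \ref{common continuity} provides only continuity, not a Lipschitz modulus, for $\balpha_{K^*,L^*}$: there is no direct reason for $|p - p'|$ or $|q - q'|$ to be $O(|u - u'|)$. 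This is circumvented precisely by the cyclic monotonicity step, which contributes an extra factor of $|u - u'|$ in the estimate, so that the qualitative decay $|p - p'| + |q - q'| \to 0$ as $u' \to u$ is exactly what is needed to turn continuity of the selection into a Lipschitz-like bound with arbitrarily small slope.
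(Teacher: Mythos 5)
Your proof is correct, but takes a genuinely different route from the paper's. The paper begins from the two-sided sandwich estimate for $\tfrac{\rho_{K^*}}{\rho_{L^*}}(u')$ in terms of $\tfrac{\rho_{K^*}}{\rho_{L^*}}(u)$ (the paper's equation preceding \eqref{5.20}), then introduces the auxiliary trigonometric function $f(x,y) = \tfrac{u\cdot v}{x\cdot v}\tfrac{x\cdot y}{u\cdot y}$, applies the mean value theorem in the $x$-variable to extract a factor of $\lVert u'-u\rVert$, and bounds $\nabla_x f$ by a quantity proportional to $\lVert v'-v\rVert$, which is then made small via Proposition \ref{common continuity}. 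You replace the calculus step with the monotonicity of the subdifferential of the support function: the contact points $p\in F(K,u)$, $p'\in F(K,u')$ satisfy $(p-p')\cdot(u-u')=\alpha+[h_K(u')-p\cdot u']$ with both summands nonnegative, hence $\alpha\leq|p-p'||u-u'|$ by Cauchy--Schwarz, and symmetrically for $\beta$. Combined with the algebraic identity $f(u)-f(u')=\tfrac{\beta-f(u')\alpha}{h_K(u)}$, which follows from $q'=f(u')p'$, this yields $|f(u)-f(u')|\leq C(|p-p'|+|q-q'|)|u-u'|$. The structure is the same in both arguments --- one ``free'' factor of $|u-u'|$ times a prefactor that vanishes as $u'\to u$ by continuity of $\balpha_{K^*,L^*}$, invoking Lemma \ref{nonempty} to guarantee nonemptiness of both simultaneous sets --- but yours obtains it purely from convexity rather than by differentiating an explicit expression, and it handles the two-sided bound in one stroke, whereas the paper treats the reciprocal of $f$ separately near the end. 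Your concluding remark correctly identifies the essential point: mere continuity (not any Lipschitz modulus) of $\balpha_{K^*,L^*}$ suffices precisely because monotonicity already supplies the factor of $|u-u'|$.
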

\begin{proof}
Before finding $\delta$ let us establish some a priori estimated as $\delta''$ which we will use later in the proof. If $u$ is an isolated point of $\spt \lambda$ there is nothing to prove. Pick $\delta'>0$ such that $\balpha_{K^*,L^*}(u)_{\delta'}$ is contained in $u_{\frac \pi 2 - \eps'}$ for some $\eps'>0$. Using Proposition \ref{common continuity} consider neighborhood $u_{\delta''}$ such that if $u'\in u_{\delta''} \cap \spt \lambda$, then $\balpha_{K^*,L^*}(u')\subset \balpha_{K^*,L^*}(u)_{\delta'}\subset u_{\frac \pi 2 - \eps'}$. This guarantees that  for any $v\in \balpha_{K^*,L^*}(u)$ and $v'\in \balpha_{K^*,L^*}(u')$ we have ${v,v',u,u'}\subset u_{\frac \pi 2 - \eps'}$ and hence a uniform positive lower bound on their inner products. Moreover, we can pick $\delta''$ so that: \begin{equation}\label{stupid bound}
  u' \in u_{\delta''} \cap \spt \lambda \Rightarrow \rVert u'\lambda+(1-\lambda)u \rVert>\frac 1 2 
  \end{equation}

	With $\delta''$ in hand, let us start the proof. Pick any $u'\in u_{\delta''} \cap \spt \lambda$ and any $v\in \balpha_{K^*,L^*}(u)$ and $v'\in \balpha_{K^*,L^*}(u')$. Then, \begin{equation}\begin{split}
		&\rho_{K^*}(u')u'v\leq h(K^*,v)=\rho_{K^*}(u)uv \\
		&\rho_{L^*}(u')u'v\leq h(L^*,v)=\rho_{L^*}(u)uv \\
		&\rho_{K^*}(u')u'v'=h(K^*,v')\geq \rho_{K^*}(u)uv' \\
		&\rho_{L^*}(u')u'v'=h(L^*,v')\geq \rho_{L^*}(u)uv'.
	\end{split}
	\end{equation}
	Combining these equations we obtain: \begin{equation}\label{two sides}
		\frac {\rho_{K^*}}{r_{L^*}}(u) \frac {uv'}{u'v'}\frac{u'v}{uv} \leq \frac {\rho_{K^*}}{r_{L^*}}(u')\leq \frac {\rho_{K^*}}{\rho_{L^*}}(u) \frac {uv}{u'v}\frac{u'v'}{uv'}.
	\end{equation}
	Define $f: \rn \times \rn \rightarrow \mathbb R$ such that \begin{equation}
		f(x,y)=\frac {uv}{xv}\frac {xy}{uy}
	\end{equation} Note in particular that for any $x,x',y,y'$ in $u_{\frac \pi 2 - \eps'}$:
	\begin{equation}\label{5.21}
	\begin{split}
		f(x,v)=f(x',v) \\
		f(u,y)=f(u,y')
	\end{split}
	\end{equation} Since $f(u,v)=1$ by using \eqref{two sides} and definition of $f$ we obtain,
	\begin{equation}\label{5.20}
	\frac {\rho_{K^*}}{\rho_{L^*}}(u)(\frac 1 {f(u',v')}-1)	\leq \frac {\rho_{K^*}}{\rho_{L^*}}(u')-\frac {\rho_{K^*}}{\rho_{L^*}}(u) \leq \frac {\rho_{K^*}}{\rho_{L^*}}(u)(f(u',v')-1)
	\end{equation}
	In particular, this shows that $f(u',v')-1\geq0$ as $\text{sign}(\frac 1 x - 1)=-\text{sign}(x-1)$.
	
	Note that $f$ is $C^\infty$ function in a neighborhood of $(u,v)$. Holding vector $y$ fixed, the gradient $\nabla_x f$ of $f(x,y)$ at point $(x',y')$ is going to be:
	\begin{equation}\label{gradient of f}
		\nabla_x f (x',y')=\frac {u \cdot v}{u \cdot y'} \frac {(x'\cdot v)y'-(x' \cdot y')v}{(x'\cdot v)^2}
	\end{equation}
	Thus, for some $t\in[0,1]$, using \eqref{5.21} and mean value theorem we obtain
	\begin{equation}\label{some form of f}
		f(u',v')-1=f(u',v')-f(u,v')=\nabla_x f (u+t(u'-u),v')(u'-u)
	\end{equation}
	Denote by $x'=u+t(u'-u)$ we compute the following:
	\begin{equation}\label{computing something}
	\begin{split}
		\lVert{(x'\cdot v)v'- (x' \cdot v')v}\rVert= \\ \lVert(x'\cdot v)(v'-v+v)-(x' \cdot (v'-v+v))v\rVert=\\
		\lVert(x'\cdot v) (v'-v) - (x'\cdot (v'-v))v\rVert \leq  \\
		2\lVert x'\rVert\lVert v\rVert\lVert v'-v\rVert. = \\ 2\lVert x'\rVert\lVert v'-v\rVert.
			\end{split}
	\end{equation}
	Combining \eqref{gradient of f}, \eqref{some form of f} and \eqref{computing something} we obtain that 
	\begin{equation}
		|f(u',v')-1|\leq 2 \frac {u \cdot v}{(u \cdot v')(x'\cdot v)^2} \lVert x'\rVert \lVert v'-v\rVert \lVert u'-u \rVert 
	\end{equation}
	Since we, beforehand, assumed that we work in ${v,v',u,u'}\subset u_{\frac \pi 2 - \eps'}$ and all inner products are larger than some uniform constant and using \eqref{stupid bound} we obtain that there exists a constant $C_{\eps'}$ which only depends on $\eps'$
	\begin{equation}\label{5.28}
		|f(u',v')-1|\leq C_{\eps'}  \lVert v'-v\rVert \lVert u'-u \rVert 
	\end{equation}
	Combining everything together and recalling that $f(u',v')-1\geq 0$ we obtain
	\begin{equation}\label{last to combine}
		\frac {\rho_{K^*}}{\rho_{L^*}}(u')-\frac {\rho_{K^*}}{\rho_{L^*}}(u) \leq \frac {\rho_{K^*}}{\rho_{L^*}}(u) C_{\eps'}  \lVert v'-v\rVert \lVert u'-u \rVert 
	\end{equation}
	By continuity, Proposition \ref{common continuity}, for any $\eps''$ there exist a $\delta<\delta''$ such that for all $u'\in u_{\delta}\cap \spt \lambda$, and for any $v'\in \balpha_{K^*,L^*}(u')$ we have that $v'\in \balpha_{K^*,L^*}(u)_{\eps''}$. Thus we can choose $\eps''$ small enough, such that, for all $u'\in u_{\delta}\cap \spt \lambda$ and all $v'\in \balpha_{K^*,L^*}(u')$ there exist $v\in\balpha_{K^*,L^*}(u)$ with \begin{equation}\label{5.30}
		\frac {\rho_{K^*}}{\rho_{L^*}}(u) C_{\eps'}  \lVert v'-v\rVert < \frac \eps 2 
	\end{equation}
	Combining this with \eqref{last to combine} we obtain that for all $u'\in u_{\delta}\cap \spt \lambda$, \begin{equation}
		\frac {\rho_{K^*}}{\rho_{L^*}}(u')-\frac {\rho_{K^*}}{\rho_{L^*}}(u) \leq \frac \eps 2 \lVert u'-u \rVert
	\end{equation}
Now, notice that 
\begin{equation}
	\frac 1 {f(u',v')}-1= - \frac {f(u',v')-1} {f(u',v')}
\end{equation}
Since $f(x,y)$ is continuous at $(u,v)$ and $f(u,v)=1$ we can insure that our neighborhoods are small enough to claim from \eqref{5.20}, \eqref{5.28}, \eqref{5.30} that
\begin{equation}
	- \eps \lVert u'-u \rVert \leq \frac {\rho_{K^*}}{\rho_{L^*}}(u')-\frac {\rho_{K^*}}{\rho_{L^*}}(u) 
	\end{equation} 
	which completes the proof.
\end{proof}

We are now ready to prove the main result of this section

\begin{proof}[\textbf{Proof of Theorem \ref{main 21}}]
	Since $D$ is rectifiable  path connected component of $\spt \lambda$ given any $u_0,u_1\in D$ there exists a rectifiable path $\gamma(t)$ from some interval into $\spt \lambda$. 
	Since $\gamma$ is rectifiable we can assume that it is arc-length parametrized on interval $[0,L]$, see \cite{Falconer}. Pick $\eps>0$. By Lemma \ref{derivative is zero} for each $u\in \gamma$ there exist $\delta_{u}$ such that for all $u'\in u_{\delta_u}\cap \spt \lambda$: \begin{equation}
		|\frac {\rho_{K^*}}{\rho_{L^*}}(u')-\frac {\rho_{K^*}}{\rho_{L^*}}(u)| \leq \eps |u'-u|.
	\end{equation}
	By continuity for each $t\in[0,L]$ there exist $\delta'_t$ such that for $|t-t'|<\delta_t'$ we have $\gamma(t')\in \gamma(t)_{\delta_{\gamma(t)}}\cap \spt \lambda$ and, hence,
	
	\begin{equation}\label{5.39}
		|\frac {\rho_{K^*}}{\rho_{L^*}}(\gamma(t'))-\frac {\rho_{K^*}}{\rho_{L^*}}(\gamma(t))| \leq \eps |\gamma(t')-\gamma(t)|=\eps|t'-t|.
	\end{equation}
	where the last equality follows from arc-parametrization. Note that $\{I_t=(t-\frac {\delta_t'} 2,t+ \frac {\delta_t'} 2)\mid t\in [0,L]\}$ is an open cover for $[0,L]$. Let $\mathcal{C}=\{I_{t_i}\mid 1\leq i \leq k \text{ with } 0=t_1<t_2<\ldots < t_k=L\}$ be its finite subcover. Note that by construction of $I_t$ for each $1\leq i <k$, 
	\begin{equation}
		t_{i+1}-t_i< \frac {\delta_{t_{i+1}}'} 2 +  \frac {\delta_{t_{i}}'} 2 \leq \max (\delta_{t_i}',\delta_{t_{i+1}}')
	\end{equation}
and, thus, we can apply \eqref{5.39} to conclude that for each $1\leq i< k$,
\begin{equation}
		|\frac {\rho_{K^*}}{\rho_{L^*}}(\gamma(t_{i+1}))-\frac {\rho_{K^*}}{\rho_{L^*}}(\gamma(t_i))| \leq \eps |\gamma(t')-\gamma(t)|=\eps|t_{i+1}-t_i|.
	\end{equation}
Using this we obtain,
 \begin{equation}\begin{split}
	|\frac {\rho_{K^*}}{\rho_{L^*}}(u_1)-\frac {\rho_{K^*}}{\rho_{L^*}}(u_0)|= |\sum_{i=1}^{k-1}\frac {\rho_{K^*}}{\rho_{L^*}}\big( \gamma({t_{i+1}})\big)-\frac {\rho_{K^*}}{\rho_{L^*}}\big( \gamma(t_i)\big)| \leq \\ \sum_{i=1}^{k-1} \eps |t_{i+1}-t_i|= \\ \eps 
\end{split}
	\end{equation}
	Since $\eps $ was arbitrary, we obtain that $\frac {\rho_{K^*}}{\rho_{L^*}}$ is constant on $D$, which shows the desired.
	\end{proof}

 \smallskip

\frenchspacing
\bibliographystyle{cpam}

\end{document}